\newcommand{\sigmaop}[1]{\mathop{\mathpalette\@sigmaop{#1}}\slimits@}
\newcommand{\@sigmaop}[2]{%
  \vphantom{\sum}%
  \sbox\z@{$\m@th#1\sum$}%
  \dimen@=\ht\z@ \advance\dimen@\dp\z@
  \dimen\tw@=\wd\z@
  \ifx#1\displaystyle\dimen@=.9\dimen@\fi
  \ooalign{%
    \hidewidth
    $\vcenter{\hbox{$\m@th#1#2$\kern.3\dimen\tw@}%
     \ifx#1\scriptstyle\kern-.25ex\fi}$\hidewidth\cr
    $\vcenter{\hbox{%
      \resizebox{!}{\dimen@}{$\m@th\boxtimes$}%
    }\ifx#1\scriptstyle\kern-.25ex\fi}$\cr
  }%
}
\numberwithin{equation}{subsection}
\newtheorem{theorem}{Theorem}[subsection]
\newtheorem{lemma}[theorem]{Lemma}
\newtheorem{conjecture}[theorem]{Conjecture}
\newtheorem{corollary}[theorem]{Corollary}
\newtheorem{definition}[theorem]{Definition}
\newtheorem{proposition}[theorem]{Proposition}
\newtheorem*{thm1}{Theorem 1}
\newtheorem*{thm2}{Theorem 2}
\newtheorem*{thm3}{Theorem 3}
\newtheorem*{conj1}{Conjecture 1}
\theoremstyle{remark}
\newtheorem{exa}[theorem]{Example}
\newcommand{\GZip}{\mathop{\text{$G$-{\tt Zip}}}\nolimits}
\newcommand{\GiZip}{\mathop{\text{$G_i$-{\tt Zip}}}\nolimits}
\newcommand{\GoneZip}{\mathop{\text{$G_1$-{\tt Zip}}}\nolimits}
\newcommand{\GtwoZip}{\mathop{\text{$G_2$-{\tt Zip}}}\nolimits}
\newcommand{\GpZip}{\mathop{\text{$G'$-{\tt Zip}}}\nolimits}
\newcommand{\GF}{\mathop{\text{$G$-{\tt ZipFlag}}}\nolimits}
\newskip\procskipamount
\newskip\interskipamount
\newskip\refskipamount
\newcommand{\procskip}{\vskip\procskipamount}
\newcommand{\interskip}{\vskip\interskipamount}
\newcommand{\refskip}{\vskip\refskipamount}
\newcommand{\procbreak}{\par
   \ifdim\lastskip<\procskipamount\removelastskip
   \penalty-100
   \procskip\fi
   \noindent\ignorespaces}
\newcommand{\titlebreak}{\par%
\ifdim\lastskip<\interskipamount\removelastskip%
\penalty10000%
\interskip\fi%
\noindent}%
\newcommand{\interbreak}{\par%
\ifdim\lastskip<\interskipamount\removelastskip%
\penalty-100%
\interskip\fi%
\noindent\ignorespaces}%
\newcommand{\refbreak}{\par%
\ifdim\lastskip<\refskipamount\removelastskip%
\penalty-100%
\refskip\fi%
\noindent\ignorespaces}%
\newcounter{listcounter}
\newcounter{deflistcounter}
\newcounter{equivcounter}
\newskip{\itemsepamount}
\newskip{\topsepamount}
\newenvironment{assertionlist}{%
  \begin{list}
    {\upshape (\arabic{listcounter})}
    {\setlength{\leftmargin}{18pt}
     \setlength{\rightmargin}{0pt}
     \setlength{\itemindent}{0pt}
     \setlength{\labelsep}{5pt}
     \setlength{\labelwidth}{13pt}
     \setlength{\listparindent}{\parindent}
     \setlength{\parsep}{0pt}
     \setlength{\itemsep}{\itemsepamount}
     \setlength{\topsep}{\topsepamount}
     \usecounter{listcounter}}}
  {\end{list}}
\newenvironment{definitionlist}{%
  \begin{list}
    {\upshape (\alph{deflistcounter})}
    {\setlength{\leftmargin}{18pt}
     \setlength{\rightmargin}{0pt}
     \setlength{\itemindent}{0pt}
     \setlength{\labelsep}{5pt}
     \setlength{\labelwidth}{13pt}
     \setlength{\listparindent}{\parindent}
     \setlength{\parsep}{0pt}
     \setlength{\itemsep}{\itemsepamount}
     \setlength{\topsep}{\topsepamount}
     \usecounter{deflistcounter}}}
  {\end{list}}
\newenvironment{Alist}{%
  \begin{list}
    {\upshape (\Alph{deflistcounter})}
    {\setlength{\leftmargin}{18pt}
     \setlength{\rightmargin}{0pt}
     \setlength{\itemindent}{0pt}
     \setlength{\labelsep}{5pt}
     \setlength{\labelwidth}{13pt}
     \setlength{\listparindent}{\parindent}
     \setlength{\parsep}{0pt}
     \setlength{\itemsep}{\itemsepamount}
     \setlength{\topsep}{\topsepamount}
     \usecounter{deflistcounter}}}
  {\end{list}}
\newenvironment{equivlist}{%
  \begin{list}
    {\upshape (\roman{equivcounter})}
    {\setlength{\leftmargin}{18pt}
     \setlength{\rightmargin}{0pt}
     \setlength{\itemindent}{0pt}
     \setlength{\labelsep}{5pt}
     \setlength{\labelwidth}{13pt}
     \setlength{\listparindent}{\parindent}
     \setlength{\parsep}{0pt}
     \setlength{\itemsep}{\itemsepamount}
     \setlength{\topsep}{\topsepamount}
     \usecounter{equivcounter}}}
  {\end{list}}
\newenvironment{bulletlist}{%
  \begin{list}
    {\upshape \textbullet}
    {\setlength{\leftmargin}{18pt}
     \setlength{\rightmargin}{0pt}
     \setlength{\itemindent}{0pt}
     \setlength{\labelsep}{6pt}
     \setlength{\labelwidth}{12pt}
     \setlength{\listparindent}{\parindent}
     \setlength{\parsep}{0pt}
     \setlength{\itemsep}{\itemsepamount}
     \setlength{\topsep}{\topsepamount}}}
  {\end{list}}
\newcommand{\Bcal}{{\mathcal B}}
\newcommand{\Ccal}{{\mathcal C}}
\newcommand{\Ecal}{{\mathcal E}}
\newcommand{\Fcal}{{\mathcal F}}
\newcommand{\Gcal}{{\mathcal G}}
\newcommand{\Lcal}{{\mathcal L}}
\newcommand{\Ocal}{{\mathcal O}}
\newcommand{\Scal}{{\mathcal S}}
\newcommand{\Ucal}{{\mathcal U}}
\newcommand{\Vcal}{{\mathcal V}}
\newcommand{\Zcal}{{\mathcal Z}}
\renewcommand{\AA}{\mathbb{A}}
\newcommand{\CC}{\mathbb{C}}
\newcommand{\FF}{\mathbb{F}}
\newcommand{\GG}{\mathbb{G}}
\newcommand{\NN}{\mathbb{N}}
\newcommand{\QQ}{\mathbb{Q}}
\newcommand{\RR}{\mathbb{R}}
\renewcommand{\SS}{\mathbb{S}}
\newcommand{\ZZ}{\mathbb{Z}}
\DeclareMathOperator{\Pic}{Pic}
\newcommand{\Sscr}{{\mathscr S}}
\newcommand{\sgn}{\mbox{sgn}}
\DeclareMathOperator{\Gal}{Gal}
\DeclareMathOperator{\rank}{rank}
\DeclareMathOperator{\Span}{Span}
\DeclareMathOperator{\pr}{pr}
\DeclareMathOperator{\Ker}{Ker}
\DeclareMathOperator{\Sh}{Sh}
\DeclareMathOperator{\Sym}{Sym}
\DeclareMathOperator{\SL}{SL}
\DeclareMathOperator{\GL}{GL}
\DeclareMathOperator{\PGL}{PGL}
\DeclareMathOperator{\GSp}{GSp}
\DeclareMathOperator{\Sp}{Sp}
\DeclareMathOperator{\U}{U}
\DeclareMathOperator{\GU}{GU}
\newcommand{\id}{{\rm Id}}
\newcommand{\loccit}{{\em loc.\ cit. }}
\newcommand{\loccitn}{{\em loc.\ cit.}}
\newcommand{\diag}{{\rm diag}}
\newcommand{\fil}{{\rm Fil}}
\DeclareMathOperator{\Std}{Std}
\renewcommand{\div}{{\rm div}}
\DeclareMathOperator{\ind}{ind}
\DeclareMathOperator{\mult}{mult}
\DeclareMathOperator{\Norm}{Norm}
\DeclareMathOperator{\deter}{det}
\DeclareMathOperator{\Res}{Res}
\DeclareMathOperator{\Flag}{Flag}
\DeclareMathOperator{\Frac}{Frac}
\DeclareMathOperator{\Eff}{Eff}
\DeclareMathOperator{\Cox}{Cox}
\newcommand{\relmiddle}[1]{\mathrel{}\middle#1\mathrel{}}
\newcommand{\JS}[1]{{\color{ForestGreen}  [#1]}}
\def\numero{%
  \leavevmode
  \hbox{%
    $\rm N^{\mkern0.8mu\underline{\mkern-0.8mu o\mkern-0.8mu}\mkern0.8mu}$%
  }%
}
\newcommand{\xdasharrow}[2][->]{
% correct vertical setting by egreg:
% http://tex.stackexchange.com/a/59660/13304
\tikz[baseline=-\the\dimexpr\fontdimen22\textfont2\relax]{
\node[anchor=south,font=\scriptsize, inner ysep=1.5pt,outer xsep=2.2pt](x){#2};
\draw[shorten <=3.4pt,shorten >=3.4pt,dashed,#1](x.south west)--(x.south east);
}
}
\newcommand{\Ha}{\mathsf{Ha}}
\newcommand{\ha}{\mathsf{ha}}
\newcommand{\pha}{\mathsf{pHa}}
\newcommand{\zip}{\mathsf{zip}}
\newcommand{\GS}{\mathsf{GS}}
\newcommand{\flag}{\mathsf{flag}}
\newcommand{\low}{\mathsf{low}}
\begin{document}

\title{The stack of $G$-zips is a Mori dream space} 

\author{Jean-Stefan Koskivirta}

\date{}

\maketitle

\begin{abstract}
We first extend previous results of the author with T. Wedhorn and W. Goldring regarding the existence of $\mu$-ordinary Hasse invariants for Hodge-type Shimura varieties to other automorphic line bundles. We also determine exactly which line bundles admit nonzero sections on the stack of $G$-zips of Pink--Wedhorn--Ziegler. Then, we define and study the Cox ring of the stack of $G$-zips and show that it is always finitely generated. Finally, beyond the case of line bundles, we define a ring of vector-valued automorphic forms on the stack of $G$-zips and study its properties. We prove that it is finitely generated in certain cases.
\end{abstract}

\section{Introduction}

The broad topic of this paper is automorphic forms in characteristic $p$. Similarly to classical automorphic forms, they can be defined as global sections of certain vector bundles on the special fiber of Shimura varieties. For a reductive group $G$ over $\FF_p$, the stack of $G$-zips of Pink--Wedhorn--Ziegler affords a group-theoretical analogue of Shimura varieties. Specifically, let $G$ be a connected, reductive group over $\FF_p$ and $\mu\colon \GG_{\mathrm{m},k}\to G_{k}$ a cocharacter of $G_{k}$, where $k=\overline{\FF}_p$ is a fixed algebraic closure of $\FF_p$. Attached to the pair $(G,\mu)$, there is an associated stack of $G$-zips, denoted by $\GZip^\mu$, as defined in \cite{Pink-Wedhorn-Ziegler-zip-data, Pink-Wedhorn-Ziegler-F-Zips-additional-structure}. It appears in several aspects of the theory of Shimura varieties in characteristic $p$ and is conjectured to share many geometric properties with them. This approach makes it possible to study questions related to singularities, cohomology vanishing, etc. from a group-theoretical point of view by studying similar questions on the stack $\GZip^\mu$. In this paper, we will prove that $\GZip^\mu$ is a Mori dream space.

The stack of $G$-zips can be defined as a classifying stack of certain torsors. It also admits an expression as a quotient stack (see section \ref{zip-def} for details). It relates to the theory of Shimura varieties as follows. Let $S_K$ be the special fiber at a place of good reduction of a Hodge-type Shimura variety, whose attached reductive group (over $\FF_p$) is $G$. Then, Zhang has constructed a smooth morphism of stacks
\begin{equation}
    \zeta\colon S_K\to \GZip^\mu
\end{equation}
that can be shown to be surjective. The fibers of this map are called the Ekedahl--Oort strata of $S_K$. There is a unique open point in $\GZip^\mu$, and the corresponding stratum is called the $\mu$-ordinary locus of $S_K$. We think of the stack $\GZip^\mu$ as a group-theoretical version of $S_K$. Beside the aforementioned stratification, it also shares another important aspect of Shimura varieties, as it carries a natural family of vector bundles (termed automorphic vector bundles). To explain this, recall that one can naturally attach to the pair $(G,\mu)$ a zip datum $\Zcal_\mu=(G,P,Q,L,M)$ consisting of two parabolic subgroups $P,Q$ with respective Levi subgroups $L,M$. Then, to any algebraic representation $\rho\colon P\to \GL(V)$, one attaches a vector bundle $\Vcal(\rho)$ on $\GZip^\mu$. In the special case when $\rho$ is a character $\lambda\in X^*(P)=X^*(L)$, the attached vector bundle is a line bundle, that we denote by $\Lcal(\lambda)$.

In \cite{Koskivirta-Wedhorn-Hasse}, the author and T. Wedhorn constructed $\mu$-ordinary Hasse invariants for general pairs $(G,\mu)$ (later extended by W. Goldring and the author in \cite{Goldring-Koskivirta-Strata-Hasse}). Fix an appropriate Borel pair $(B,T)$ (we impose that $B\subset P$, $T\subset L$ and that $(B,T)$ be defined over $\FF_p$), and let $\Delta$ denote the simple roots. Write $I\subset \Delta$ for the set of simple roots of $L$ and set $\Delta^P\colonequals \Delta \setminus I$. In \loccitn, the author and T. Wedhorn showed the following: If $\lambda\in X^*(L)$ satisfies the condition
\begin{equation}\label{cond-wed}
\langle \lambda, \alpha^\vee \rangle < 0 \quad \textrm{for all} \ \alpha\in \Delta^P,
\end{equation}
then there exists $N\geq 1$ and a section
\begin{equation}
    \Ha_\mu \in H^0(\GZip^\mu,\Lcal(\lambda)^{\otimes N})
\end{equation}
whose non-vanishing locus is exactly the open stratum of $\GZip^\mu$ (i.e. the $\mu$-ordinary locus). Such a section is called a $\mu$-ordinary Hasse invariant. It is a group-theoretical generalization of the classical Hasse invariant. Pulling back this section via the map $\zeta$, one obtains similar invariants over the Shimura variety $S_K$. Our first result in the present paper is to generalize the construction of $\mu$-ordinary Hasse invariants and give a more precise statement regarding their existence. Furthermore, we answer the question of when the space $H^0(\GZip^\mu,\Lcal(\lambda))$ is nonzero. Define a map 
\begin{equation}
\wp_*\colon X_*(T)_\QQ\to X_*(T)_\QQ, \quad \wp_* \colon \delta \mapsto \delta-p\sigma(\delta)
\end{equation}
where $\sigma\in \Gal(k/\FF_p)$ is the $p$-power Frobenius homomorphism. For each root $\alpha\in \Delta^P$, define a quasi-cocharacter $\delta_\alpha\in X_*(T)_\QQ$ as the preimage $\delta_\alpha \colonequals \wp^{-1}(\alpha^\vee)$. We prove:

\begin{thm1}[Theorem \ref{thm-mu}, Proposition \ref{prop-line}]
Let $\lambda\in X^*(L)$ be a character.
\begin{assertionlist}
    \item There exists a $\mu$-ordinary Hasse invariant $h\in H^0(\GZip^\mu,\Lcal(\lambda)^{\otimes m})$ (for some $m\geq 1)$ if and only if $\lambda$ satisfies $\langle \lambda, \delta_\alpha \rangle > 0$ for all $\alpha\in \Delta^P$.
    \item The space $H^0(\GZip^\mu,\Lcal(\lambda)^{\otimes m})$ is nonzero (for some $m\geq 1$) if and only if $\lambda$ satisfies $\langle \lambda, \delta_\alpha \rangle \geq 0$ for all $\alpha\in \Delta^P$.
\end{assertionlist}
\end{thm1}

The purpose of the paper \cite{Koskivirta-Wedhorn-Hasse} was to construct such invariants as a section of a power of the Hodge line bundle $\omega$, which exists naturally on $S_K$. The character $\lambda_\omega$ corresponding to $\omega$ satisfies the stronger conditions \eqref{cond-wed}, so the result of \loccit was sufficient for its purpose. Similar sections were constructed by Goldring--Nicole for unitary Shimura varieties (\cite{Goldring-Nicole-mu-Hasse}) and W. Goldring and the author in \cite{Goldring-Koskivirta-Strata-Hasse}. However, it can be useful in general to construct such invariants for other automorphic line bundles $\Lcal(\lambda)$, beyond the case of the Hodge line bundle $\omega$. Conditions (ii) in Theorem 1 are in general much less restrictive than \eqref{cond-wed}.

The next purpose of this article is to study the spaces $H^0(\GZip^\mu,\Lcal(\lambda))$ "all at once". For this, the standard construction is to form the direct sum:
\begin{equation*}
   \Cox(\GZip^\mu) \colonequals \bigoplus_{\lambda\in X^*(L)} H^0(\GZip^\mu, \Lcal(\lambda)).
\end{equation*}
This object inherits a natural structure of $k$-algebra, and we call it the Cox ring of $\GZip^\mu$, by analogy with the Cox ring of a projective variety. Recall that a projective $k$-variety $X$ with finitely generated Picard group $\Pic(X)$ is called a Mori dream space if its Cox ring $\Cox(X)$ is a finitely generated $k$-algebra. Examples of such spaces are spherical varieties and smooth Fano varieties. For an arbitrary pair $(G,\mu)$, we show:
\begin{thm2}[Theorem \ref{cox-fin-gen-thm}]
The ring $\Cox(\GZip^\mu)$ is a finitely generated $k$-algebra. In other words, the stack $\GZip^\mu$ is a Mori dream space.
\end{thm2}
We also show some elementary properties of the ring $\Cox(\GZip^\mu)$: it is an integral domain which is integrally closed, and its unit group identifies with the group of nowhere-vanishing functions on $G$ (Proposition \ref{Cox-first}). When $G$ is semisimple and $\Pic(G)=0$, the Cox ring of $\GZip^\mu$ is a polynomial algebra (Corollary \ref{cor-semisimple}).

In the last part of the article, we consider more general, higher rank automorphic vector bundles. Let $\lambda\in X^*(T)$ be a character, and consider the induced $P$-representation $V_I(\lambda)\colonequals \ind_B^P(\lambda)$. Write $\Vcal_I(\lambda)$ for the associated vector bundle on $\GZip^\mu$. In the context of Shimura varieties, the pullback $\zeta^*(\Vcal_I(\lambda))$ coincides with the automorphic vector bundle attached to $\lambda$. Global sections in $H^0(S_K,\Vcal_I(\lambda))$ are called mod $p$ automorphic forms of level $K$ and weight $\lambda$. They play a central role in the mod $p$ Langlands program, as Hecke-invariant forms are conjectured to correspond to mod $p$ Galois representations. Again, we may form the following direct sum:
\begin{equation}
     R_{\zip}\colonequals \bigoplus_{\lambda\in X^*(T)} H^0(\GZip^\mu,\Vcal_{I}(\lambda)).
\end{equation}
The natural map $V_I(\lambda)\otimes_k V_I(\lambda') \to V_I(\lambda+\lambda')$ (for $\lambda,\lambda'\in X^*(T)$) endows $R_{\zip}$ with a structure of a $k$-algebra. We call $R_{\zip}$ the ring of automorphic forms on $\GZip^\mu$. It contains the Cox ring of $\GZip^\mu$ as a subring. One may also interpret $R_{\zip}$ as the Cox ring of the stack of $G$-zip flags (see \ref{sec-zip-flag} for details). We conjectured in \cite[Conjecture 5.1.4]{Koskivirta-automforms-GZip}:
\begin{conj1}
The ring $R_{\zip}$ is finitely generated.
\end{conj1}
In \loccitn, we carried out the case of the group $G=\Sp_{4,\FF_p}$, where we proved that $R_{\zip}$ is a polynomial algebra in three variables. In the present article, we prove general properties of this ring: it is an integrally closed integral domain, whose field of fractions identifies with $k(B\cap M)$ (Proposition \ref{prop-Kzip}). We show that the veracity of Conjecture 1 is unchanged if we modify $G$ by a central subgroup (Proposition \ref{Rprime-vs-R}), or if we take a product over $\FF_p$ of two $\FF_p$-reductive groups (Proposition \ref{prop-Fp-prod}). We also show that it holds in the extremal cases $P=G$ or $P=B$ (Proposition \ref{PBPG}). Finally, we illustrate the case of a general linear group in three dimensions (and its $\FF_p$-forms given by unitary groups) in the theorem below.

\begin{thm3}[Corollary \ref{cor-split-GL3}, Proposition \ref{prop-inert-U}]
Let $G$ be a reductive $\FF_p$-group such that $G_k\simeq \GL_{3,k}$. For any cocharacter $\mu\colon \GG_{\mathrm{m},k}\to G_k$, Conjecture 1 holds true.
\end{thm3}

We end this introduction by explaining the content of each chapter. In chapter 2, we review some basic definitions regarding the stack of $G$-zips, their connection with Shimura varieties, the construction of vector bundles attached to algebraic representations. Chapter 3 is devoted to studying the space of global sections of vector bundles over $\GZip^\mu$, using as a main ingredient the results of our previous paper with N. Imai \cite{Imai-Koskivirta-vector-bundles}. We also prove Theorem 1 regarding $\mu$-ordinary Hasse invariants. In Chapter 4, we introduce the Cox ring of $\GZip^\mu$ and study its properties. We prove that it is finitely generated (Theorem 2). The last chapter deals with the ring $R_{\zip}$ of automorphic forms. We illustrate the case of ($\FF_p$-forms of) $\GL_{3}$ and prove Theorem 3.

\section{\texorpdfstring{The stack of $G$-zips}{}}

\subsection{Definition} \label{zip-def}
Fix an algebraic closure $k=\overline{\FF}_p$ of $\FF_p$. For a characteristic $p$ scheme $X$, we write $X^{(p)}$ for the Frobenius-twist of $X$ and $F_X\colon X\to X^{(p)}$ for the relative Frobenius morphism. Let $G$ be a connected, reductive group over $\FF_p$ and $\mu\colon \GG_{\mathrm{m},k}\to G_k$ a cocharacter. We call the pair $(G,\mu)$ a cocharacter datum. We can attach to $(G,\mu)$ a tuple $\Zcal_\mu=(G,P,Q,L,M)$ called a zip datum. Here, $P$, $Q$ are parabolic subgroups of $G_k$ defined as follows. First let $P_{+}(\mu)$ (resp. $P_-(\mu)$) be the parabolic subgroup whose $k$-points are the elements $g\in G(k)$ such that the map 
\[
\GG_{\mathrm{m},k} \to G_{k}; \  t\mapsto\mu(t)g\mu(t)^{-1} \quad (\textrm{resp. } t\mapsto\mu(t)^{-1}g\mu(t))
\]
extends to a morphism of varieties $\AA_{k}^1\to G_{k}$. We obtain a pair of opposite parabolics $(P_+(\mu),P_{-}(\mu))$ in $G_k$ whose intersection $P_+(\mu)\cap P_-(\mu)=L(\mu)$ is the centralizer of $\mu$ (it is a common Levi subgroup of $P_+(\mu)$ and $P_-(\mu)$). We define $P \colonequals P_-(\mu)$, $Q \colonequals (P_+(\mu))^{(p)}$, $L \colonequals L(\mu)$ and $M \colonequals  L^{(p)}$. Write $\varphi\colon L\to M$ for the Frobenius homomorphism. For an algebraic group $H$, we write $R_{\mathrm{u}}(H)$ for its unipotent radical. Let $\theta^P_L\colon P\to L$ be the projection onto the Levi subgroup $L$, i.e. the map defined by $\theta^P_L(xu)=x$ for all $x\in L$ and $u\in R_{\mathrm{u}}(P)$. Define $\theta^Q_M\colon Q\to M$ similarly. The zip group of $(G,\mu)$ is defined by:
    \begin{equation}\label{zipgroup}
E \colonequals \{(x,y)\in P\times Q \mid  \varphi(\theta^P_L(x))=\theta^Q_M(y)\}.
\end{equation}
We let $E$ act on $G_k$ by the action $(x,y)\cdot g = xgy^{-1}$ for all $(x,y)\in E$, $g\in G_k$. Moonen--Wedhorn (\cite{Moonen-Wedhorn-Discrete-Invariants} and Pink--Wedhorn--Ziegler (\cite{Pink-Wedhorn-Ziegler-zip-data,Pink-Wedhorn-Ziegler-F-Zips-additional-structure}) defined the stack of $G$-zips of type $\mu$. It can be expressed as a moduli stack of certain torsors. We will use the following, equivalent definition as a quotient stack
\begin{equation}
    \GZip^\mu \colonequals \left[ E\backslash G_k \right].
\end{equation}
This stack is a smooth $k$-stack with finite underlying topological space.

\subsection{Group-theoretical notation} \label{sec-gp-not}

Let again $(G,\mu)$ be a cocharacter datum and $\Zcal_\mu=(G,P,Q,L,M)$ be the associated zip datum as in section \ref{zip-def}. We explain notation pertaining to the root datum of $G$, used throughout the paper. We will always make the following assumption: there exists a Borel pair $(B,T)$ defined over $\FF_p$ satisfying $B\subset P$ and $T\subset L$. After changing $\mu$ up to conjugation, this assumption can always be achieved. Let $B^+$ denote the opposite Borel subgroup, i.e. the unique Borel subgroup such that $B\cap B^+=T$.

We let $X^*(T)$ (resp. $X_*(T)$) denote the group of characters (resp. cocharacters) of $T$. Since $T$ is defined over $\FF_p$, these sets are naturally endowed with an action of $\Gal(k/\FF_p)$. Write $W=W(G_k,T)$ for the Weyl group of $G_k$. Similarly, $\Gal(k/\FF_p)$ acts on $W$ and the actions of $\Gal(k/\FF_p)$ and $W$ on $X^*(T)$ and $X_*(T)$ are compatible in a natural sense. We let $\Phi,\Phi^+,\Delta$ denote respectively the set of $T$-roots of $G$, the positive roots and the simple roots. Our convention of positivity differs from some authors: A root $\alpha$ lies in $\Phi^+$ if and only if the $\alpha$-root group $U_{\alpha}$ is contained in $B^+$. For a root $\alpha \in \Phi$, we let $s_\alpha \in W$ be the corresponding reflection. The system $(W,\{s_\alpha \mid \alpha \in \Delta\})$ is a Coxeter system. We write $\ell  \colon W\to \NN$ for the length function. In particular, $\ell(s_\alpha)=1$ for all $\alpha\in \Delta$. Let $w_0\in W$ be the longest element of $W$. Write $\Phi^+_L\subset \Phi^+$ for the positive $T$-roots of $L$, and let $I\colonequals \Delta_L=\Phi^+_L\cap \Delta$ denote the simple roots of $L$. We also define $\Delta^P \colonequals \Delta \setminus I$. Let $W_I\colonequals W(L,T)$ be the Weyl group of $L$. Write $w_{0,I}$ for the longest element in $W_I$. Define ${}^I W$ to be the subset of elements $w\in W$ which are of minimal length in the right coset $W_I w$. The set ${}^I W$ is a set of representatives for the quotient $W_I\backslash W$. The longest element in the set ${}^I W$ is $w_{0,I} w_0$.

We say that a character $\lambda\in X^*(T)$ is dominant if $\langle \lambda,\alpha^\vee \rangle \geq 0$ for all $\alpha \in \Delta$. The set of dominant characters will be denoted by $X_{+}^*(T)$. Similarly, if $\langle \lambda,\alpha^\vee \rangle \geq 0$ for all $\alpha \in I$, we say that $\lambda$ is $I$-dominant (or $L$-dominant), and write $X_{+,I}^*(T)$ for the set of $I$-dominant characters. For each $\alpha\in \Phi$, choose an isomorphism $u_\alpha\colon \GG_{\mathrm{a}}\to U_\alpha$ so that 
$(u_{\alpha})_{\alpha \in \Phi}$ is a realization in the sense of \cite[8.1.4]{Springer-Linear-Algebraic-Groups-book}. In particular, we have 
\begin{equation}\label{eq:phiconj}
 t u_{\alpha}(x)t^{-1}=u_{\alpha}(\alpha(t)x), \quad \forall x\in \GG_{\mathrm{a}},\  \forall t\in T.
\end{equation}

\subsection{\texorpdfstring{Stratification of $\GZip^\mu$}{}}
\label{subsec-zipstrata}

Let $(G,\mu)$ be a cocharacter datum as in \ref{zip-def} and let $\GZip^\mu =[E\backslash G_k]$ be the attached stack of $G$-zips. We recall the parametrization of the points of the underlying topological space of $\GZip^\mu$. They correspond bijectively to the $E$-orbits in $G_k$. By \cite[Proposition 7.1]{Pink-Wedhorn-Ziegler-zip-data}, there are finitely many $E$-orbits in $G_k$. Furthermore, each $E$-orbit is smooth and locally closed in $G_k$, and the Zariski closure of an $E$-orbit is a union of $E$-orbits. For $w\in W$, we fix a representative $\dot{w}\in N_G(T)$, such that $(w_1w_2)^\cdot = \dot{w}_1\dot{w}_2$ whenever $\ell(w_1 w_2)=\ell(w_1)+\ell(w_2)$ (this is possible by choosing a Chevalley system, \cite[ XXIII, \S6]{SGA3}). We set throughout
\begin{equation} \label{def-z}
z\colonequals \sigma(w_{0,I})w_0.    
\end{equation}
For $w\in W$, write
\begin{equation}
    G_w\colonequals E\cdot \dot{w}\dot{z}^{-1}
\end{equation}
for the $E$-orbit of the element $\dot{w}\dot{z}^{-1}$. We will simply write $w$ instead of $\dot{w}$. Define a partial order $\preccurlyeq$ on ${}^I W$ as follows: For $w,w'\in {}^I W$, write $w'\preccurlyeq w$ if there exists $w_1\in W_I$ such that $w'\leq w_1 w \sigma(w_1)^{-1}$ (see \cite[Corollary 6.3]{Pink-Wedhorn-Ziegler-zip-data}).

\begin{theorem}[{\cite[Theorem 7.5]{Pink-Wedhorn-Ziegler-zip-data}}] \label{thm-E-orb-param} \ 
\begin{assertionlist}
\item The map $w\mapsto G_w$ is a bijection ${}^I W \rightarrow \{ \textrm{$E$-orbits in $G_k$} \}$.
\item For $w\in {}^I W$, one has $\dim(G_w)= \ell(w)+\dim(P)$.
\item For $w\in {}^I W$, the Zariski closure of $G_w$ is 
\begin{equation}\label{equ-closure-rel}
\overline{G}_w=\bigsqcup_{w'\in {}^IW,\  w'\preccurlyeq w} G_{w'}.
\end{equation}
\end{assertionlist}
\end{theorem}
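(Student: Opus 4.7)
The plan is to reduce the classification of $E$-orbits on $G_k$ to Bruhat theory. The key structural observation is that $E$ contains the unipotent subgroup $R_{\mathrm{u}}(P)\times R_{\mathrm{u}}(Q)$, and the quotient $E/(R_{\mathrm{u}}(P)\times R_{\mathrm{u}}(Q))$ identifies with $L$ as the graph of the Frobenius isogeny $\varphi\colon L\to M$. Hence, after taking the unipotent quotient, the $E$-action on $G_k$ becomes a Frobenius-twisted action of $L$ on $R_{\mathrm{u}}(P)\backslash G_k / R_{\mathrm{u}}(Q)$, and the element $z=\sigma(w_{0,I})w_0$ is the precise twist needed so that the canonical Bruhat representatives $\dot{w}$ transported by $\dot{z}^{-1}$ align with this action.

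For the parametrization in (1), I would first show that every element $g\in G_k$ can be brought into the form $\dot{w}\dot{z}^{-1}$ via the $E$-action. Starting from the Bruhat decomposition $G_k=\bigsqcup_{w\in W}B\dot{w}B$, one uses $R_{\mathrm{u}}(P)\times R_{\mathrm{u}}(Q)\subset E$ to kill the unipotent parts and then the twisted $L$-action to absorb the torus ambiguity. Two representatives $\dot{w}\dot{z}^{-1}$ and $\dot{w}'\dot{z}^{-1}$ should be $E$-equivalent precisely when $w'=xw\sigma(x)^{-1}$ for some $x\in W_I$, and ${}^IW$ is the set of minimal length representatives of these $\sigma$-twisted $W_I$-cosets. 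For (2), I would compute $\dim\mathrm{Stab}_E(\dot{w}\dot{z}^{-1})$ by intersecting $E$ with the conjugate subgroup fixing $\dot{w}\dot{z}^{-1}$; a Bruhat-type root-group count should give $\dim\mathrm{Stab}_E(\dot{w}\dot{z}^{-1})=\dim G-\ell(w)-\dim P$, and combined with $\dim E=\dim G$ this yields $\dim G_w=\ell(w)+\dim P$.

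For the closure relation (3), the natural approach is to transfer the Bruhat closure $\overline{B\dot{w}'B}=\bigsqcup_{w\leq w'}B\dot{w}B$ through the reductions above. For the non-trivial direction, given $w\preccurlyeq w'$ in ${}^IW$, one constructs an explicit one-parameter degeneration inside $E\cdot\dot{w}'\dot{z}^{-1}$ whose limit lies in $G_w$, using standard deformations of Bruhat cells combined with the action of $R_{\mathrm{u}}(P)\times R_{\mathrm{u}}(Q)$. The main obstacle throughout is tracking how the Frobenius twist $\sigma$ and the conjugation by $\dot{z}$ interact with the Weyl group action: the precise form of the $\sigma$-twisted $W_I$-equivalence on representatives, and of the order $\preccurlyeq$, are dictated by the choice $z=\sigma(w_{0,I})w_0$, and verifying this compatibility cleanly — in particular the descent of the Bruhat order to ${}^IW$ modulo the twist — is the technical heart of the argument.
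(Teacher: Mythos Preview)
The paper does not prove this theorem: it is stated with a citation to \cite[Theorem 7.5]{Pink-Wedhorn-Ziegler-zip-data} and used as a black box throughout. There is therefore no proof in the paper to compare your proposal against.

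That said, your sketch is a reasonable outline of the strategy in the cited reference, but it is only a sketch. Several of the steps you describe as routine are in fact the substance of the Pink--Wedhorn--Ziegler argument. In particular, the claim that two representatives are $E$-equivalent precisely when related by a $\sigma$-twisted $W_I$-conjugation is not immediate from Bruhat theory alone: one must control the interaction between the unipotent reductions, the Frobenius twist, and the Levi action, and this requires an inductive argument on the length together with a careful analysis of stabilizers (this is where the bulk of the work in \loccit lies). Likewise, the closure relation in (3) does not follow by simply ``transferring'' Bruhat closures, because the $E$-orbits are not Bruhat cells and the twisted order $\preccurlyeq$ is strictly coarser than the restriction of the Bruhat order to ${}^IW$; establishing \eqref{equ-closure-rel} requires the full machinery developed in \cite{Pink-Wedhorn-Ziegler-zip-data}. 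Your proposal correctly identifies the shape of the argument but underestimates the technical depth of each step.
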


In particular, there is a unique open $E$-orbit
$U_\mu\subset G_k$ corresponding via the map $w\mapsto G_w$ to the longest element $w_{0,I}w_0\in {}^I W$. It coincides with the $E$-orbit of the identity element $1\in G_k$. Using the terminology pertaining to Shimura varieties, we call $U_\mu$ the \emph{$\mu$-ordinary stratum} of $G_k$ and the open substack $\Ucal_\mu \colonequals [E\backslash U_\mu]$ the \emph{$\mu$-ordinary locus}. It corresponds to the $\mu$-ordinary locus of the special fiber of Shimura varieties, studied in \cite{Moonen-Serre-Tate} and \cite{Wortmann-mu-ordinary}. See section \ref{shim-sec} below for details. The codimension one $E$-orbits correspond bijectively to elements of ${}^I W$ of length $\ell(w_{0,I}w_0)-1$. Such elements can be written $w_{0,I}s_\alpha w_0$ for $\alpha \in \Delta^P$. We will simply write $Z_{\alpha}$ for the $E$-orbit corresponding to $w_{0,I}s_\alpha w_0$. One sees easily that $Z_\alpha$ is the $E$-orbit of $s_\alpha$.

\subsection{Shimura varieties}\label{shim-sec}

We briefly explain the connection between $G$-zips and  Shimura varieties. Let $(\mathbf{G},\mathbf{X})$ be a Shimura datum of Hodge-type (i.e. it admits an embedding into a Siegel-type Shimura datum). In particular, $\mathbf{G}$ is a connected, reductive group over $\QQ$. Let $K\subset \mathbf{G}(\AA_f)$ be a sufficiently small compact open subgroup and write $\Sh_K(\mathbf{G},\mathbf{X})$ for the associated Shimura variety. It is a quasi-projective variety defined over a number field $\mathbf{E}$ (the reflex field). The field $\mathbf{E}$ is the field of definition of the $\mathbf{G}(\CC)$-conjugacy class of the cocharacter $\mu\colon \GG_{\mathrm{m},\CC}\to \mathbf{G}_\CC$ naturally attached to the Shimura datum $(\mathbf{G},\mathbf{X})$. 

Let $p$ be a prime number, and suppose that $K$ can be written as $K=K_pK^p$ where $K_p\subset \mathbf{G}(\QQ_p)$ is hyperspecial and $K^p\subset \mathbf{G}(\AA_f^p)$ is compact open (we say that $p$ is a place of good reduction). By definition of a hyperspecial subgroup, we can write $K_p=\Gcal(\ZZ_p)$ for a reductive $\ZZ_p$-group $\Gcal$ such that $\Gcal\otimes_{\ZZ_p}\QQ_p \simeq \mathbf{G}_{\QQ_p}$. For each place $v|p$ in $\mathbf{E}$, Kisin (\cite{Kisin-Hodge-Type-Shimura} and Vasiu (\cite{Vasiu-Preabelian-integral-canonical-models}) have constructed a smooth, canonical model $\Scal_K$ of $\Sh_K(\mathbf{G},\mathbf{X})$ over the ring $\Ocal_{\mathbf{E}_v}$. We write $S_K\colonequals \Scal_K\otimes_{\Ocal_{\mathbf{E}_v}} k$ for the special fiber of $\Scal_K$, where $k$ is an algebraic closure of the residue field of $v$. 

Define $G\colonequals \Gcal\otimes_{\ZZ_p} \FF_p$. We may assume that it extends to a cocharacter of $\Gcal_{\overline{\ZZ}_p}$ (see \cite[\S 2.5]{Imai-Koskivirta-vector-bundles} for details). Write again $\mu\colon \GG_{\mathrm{m},k}\to G_k$ for its special fiber. In particular, the pair $(G,\mu)$ gives rise to a stack of $G$-zips of type $\mu$ as explained in \S\ref{zip-def}. Zhang (\cite{Zhang-EO-Hodge}) has constructed a smooth morphism of stacks
\begin{equation}\label{zeta-eq}
\zeta\colon S_K \to \GZip^\mu
\end{equation}
which is also surjective by \cite[Corollary 3.5.3(1)]{Shen-Yu-Zhang-EKOR}. The program started by W. Goldring and the author in \cite{Goldring-Koskivirta-global-sections-compositio, Goldring-Koskivirta-GS-cone,Goldring-Koskivirta-divisibility} is aimed at studying the geometry of $S_K$ by using $\GZip^\mu$ as a group-theoretical analogue. The authors proved in many cases that geometric information pertaining to $S_K$ can be read off the stack $\GZip^\mu$.

\subsection{Vector bundles on quotient stacks} \label{sec-vb-quot}

We recall the construction of vector bundles on quotient stacks attached to algebraic representations. Let $H$ be a smooth connected algebraic group over a field $k$, and $X$ a $k$-variety endowed with an algebraic action of $H$ (we say that $X$ is an $H$-variety). Let $\rho\colon H\to \GL(V)$ be an algebraic representation of $H$ on a finite-dimensional $k$-vector space $V$. Then, the "associated sheaf construction" of \cite[I.5.8]{jantzen-representations} produces a vector bundle $\Vcal(\rho)$ on the quotient stack $[H\backslash X]$. It is represented geometrically by the stack
\begin{equation}
    \left[ H\backslash (X\times V) \right]
\end{equation}
where $H$ acts diagonally on $X\times V$. The first projection $X\times H\to X$ induces a natural map to the quotient stack $[H\backslash X]$. Furthermore, the rank of $\Vcal(\rho)$ coincides with the dimension of $V$. In particular, characters $\lambda\colon H\to \GG_{\mathrm{m}}$ give rise to line bundles on $[H\backslash X]$. We will use the notation $\Lcal(\lambda)$ to denote the line bundle attached to $\lambda$.

We apply this general discussion to the stack of $G$-zips. Fix a cocharacter datum $(G,\mu)$ and write $\Zcal_\mu=(G,P,Q,L,M)$ for the attached zip datum, as defined in \S\ref{zip-def}. Let $\GZip^\mu =[E\backslash G_k]$ be the associated stack of $G$-zips. For any algebraic representation $\rho\colon E\to \GL(V)$ on a finite-dimensional $k$-vector space $V$, we obtain a vector bundle $\Vcal(\rho)$ on the stack $\GZip^\mu$. When $\rho\colon P\to \GL(V)$ is an algebraic $P$-representation, we view it as an $E$-representation via the first projection $\pr_1\colon E\to P$, and write again $\Vcal(\rho)$ for the attached vector bundle. To avoid ambiguity with the vector $\Vcal_I(\lambda)$ that will appear in section \ref{sec-zip-flag}, we write $\Lcal(\lambda)$ (instead of $\Vcal(\lambda)$) to denote the line bundle on $\GZip^\mu$ attached to a character $\lambda\in X^*(L)$.

\subsection{Picard group of a quotient stack}\label{sec-pic}
In this section, we recall general facts about Picard groups of quotient stacks explained in \cite[\S 2]{Koskivirta-Wedhorn-Hasse}. Let $k$ be an algebraically closed field and $H$ a smooth connected algebraic group over $k$. Let $X$ be a normal $H$-variety and write $a\colon H\times X\to X$ for the action map of $H$ on $X$. Define $p_2:H\times X\to X$ and $p_{23}\colon H\times H\times X\to H\times X$ as the projection maps given by $(g,x)\mapsto x$ and $(g_1,g_2,x)\mapsto (g_2,x)$ respectively. Let $\mu_H$ denote the multiplication map $H\times H\to H$. For a line bundle $\Lcal$ on $X$, an $H$-linearization of $\Lcal$ is an isomorphism $\phi : a^*(\mathscr{L})\to p_{2}^*(\mathscr{L})$ satisfying the cocycle condition
$$ p_{23}^*(\phi) \circ (id_H \times a)^*(\phi) =(\mu_H \times id_X)^*(\phi).$$
Let $\Pic^H(X)$ denote the group of isomorphism classes of $H$-linearized line bundles on $H$. It identifies naturally with the Picard group $\Pic([H\backslash X])$ of the quotient stack $[H\backslash X]$. Define $\Ecal(X) := \mathcal{O}(X)^{\times} / k^{\times}$, where $\mathcal{O}(X)^{\times}$ is the group of nowhere vanishing functions on $X$. When $X$ is an integral $k$-scheme of finite type, $\Ecal(X)$ is a finitely generated free abelian group (\cite[\S 1.3]{Knop-Kraft-Vust-G-variety}). Recall the following result: 

\begin{proposition}[{\cite[Corollary 2.3.1]{Koskivirta-Wedhorn-Hasse}}]\label{exseq}
Let $H$ be a smooth connected algebraic group and $X$ a normal variety endowed with an algebraic action of $H$. Then, there is an exact sequence of abelian groups
\[
1 \to k^{\times} \to (\mathcal{O}(X)^{\times})^H \to \Ecal(X) \to X^{*}(H) \to \Pic^{H}(X) \to \Pic(X) \to \Pic(H).
\]
\end{proposition}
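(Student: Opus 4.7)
The plan is to construct each of the six maps explicitly and then verify exactness term by term. The key analytic ingredient throughout will be Rosenlicht's theorem: for $H$ a connected algebraic group and $X$ a geometrically integral normal variety with a $k$-rational basepoint $x_{0}$, every unit on $H\times X$ decomposes uniquely (up to constants) as a character of $H$ times a unit on $X$, so that
\[
\Ocal(H\times X)^{\times}/k^{\times} \;\cong\; X^{*}(H)\oplus \Ecal(X).
\]

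The definitions of the maps are as follows. The first map is the inclusion of constants; the second is inclusion followed by the quotient $\Ocal(X)^{\times}\twoheadrightarrow\Ecal(X)$. For $\Ecal(X)\to X^{*}(H)$, send $\bar f$ to the character $\chi$ determined by $f(gx)/f(x)$, which is a unit on $H\times X$ equal to $1$ on $\{e\}\times X$ and hence a pullback from $H$ by Rosenlicht. For $X^{*}(H)\to \Pic^{H}(X)$, send $\chi$ to the trivial bundle $\Ocal_{X}$ equipped with the linearization $\phi(g,x)=\chi(g)$. The map $\Pic^{H}(X)\to\Pic(X)$ forgets the linearization. Finally, $\Pic(X)\to\Pic(H)$ is defined by $\Lcal\mapsto (a^{*}\Lcal\otimes p_{2}^{*}\Lcal^{-1})|_{H\times\{x_{0}\}}$.

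Exactness at the first three non-trivial terms is essentially formal once the maps are in place. At $\Ecal(X)$, $\bar f\mapsto 0$ iff $f(gx)=f(x)$, i.e.\ $f\in (\Ocal(X)^{\times})^{H}$. At $X^{*}(H)$, $\chi\mapsto 0$ in $\Pic^{H}(X)$ means the linearized trivial bundle $(\Ocal_{X},\chi)$ is equivariantly isomorphic to the trivially linearized one; such an isomorphism is multiplication by some $f\in\Ocal(X)^{\times}$ satisfying $f(gx)=\chi(g)f(x)$, i.e.\ $\chi$ lies in the image from $\Ecal(X)$. At $\Pic^{H}(X)$, a linearized bundle with trivial underlying line bundle is of the shape $(\Ocal_{X},\phi)$; the cocycle condition together with Rosenlicht forces $\phi$ to be multiplication by a character of $H$, so it comes from $X^{*}(H)$.

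The main obstacle is exactness at $\Pic(X)$, which is where the argument becomes geometric. One direction is clear: if $\Lcal$ admits a linearization $\phi\colon a^{*}\Lcal\xrightarrow{\sim}p_{2}^{*}\Lcal$, then $a^{*}\Lcal\otimes p_{2}^{*}\Lcal^{-1}$ is trivial, and a fortiori so is its restriction to $H\times\{x_{0}\}$. Conversely, suppose $(a^{*}\Lcal\otimes p_{2}^{*}\Lcal^{-1})|_{H\times\{x_{0}\}}$ is trivial on $H$. Using that $H$ is smooth connected and $X$ is normal, the seesaw principle on $H\times X$ (the bundle is trivial along every fiber $\{g\}\times X$ because $a$ and $p_{2}$ agree fiberwise up to an automorphism of $X$, and trivial along $H\times\{x_{0}\}$ by hypothesis) forces $a^{*}\Lcal\otimes p_{2}^{*}\Lcal^{-1}$ to be trivial on all of $H\times X$. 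Choosing a trivialization gives a candidate isomorphism $\phi$; two candidates differ by a unit on $H\times X$ and the cocycle condition can be repaired using Rosenlicht, since the failure of cocycle lives in the units of $H\times H\times X$ and is controlled by characters of $H$. This descent-theoretic step, combined with careful bookkeeping of the cocycle, is the technical heart of the proof.
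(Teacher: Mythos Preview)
The paper does not prove this proposition; it is quoted verbatim as a result from \cite{Koskivirta-Wedhorn-Hasse} (which in turn relies on \cite{Knop-Kraft-Vust-G-variety}). So there is no argument in the present paper to compare against, and your proposal has to stand on its own merits.

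Your treatment is correct through exactness at $\Pic^{H}(X)$, but there is a real gap at exactness at $\Pic(X)$. You assert that $\Mcal \colonequals a^{*}\Lcal\otimes p_{2}^{*}\Lcal^{-1}$ is trivial on every slice $\{g\}\times X$ ``because $a$ and $p_{2}$ agree fiberwise up to an automorphism of $X$''. That reasoning is incorrect: restricting to $\{g\}\times X$ gives $\Mcal|_{\{g\}\times X}\cong g^{*}\Lcal\otimes\Lcal^{-1}$, and an automorphism of $X$ can act nontrivially on $\Pic(X)$, so there is no reason for this to be trivial. (Think of the swap on $\PP^{1}\times\PP^{1}$ acting on $\Ocal(1,0)$.) In addition, the seesaw theorem in the form you invoke requires properness of one factor, which is not assumed here; so even if fiberwise triviality held, the conclusion would not follow.

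The ingredient that actually closes the argument, and which is the substance of the cited references, is the decomposition
\[
\Pic(H\times X)\;\cong\; p_{1}^{*}\Pic(H)\oplus p_{2}^{*}\Pic(X)
\]
for $H$ a smooth connected (linear) algebraic group and $X$ normal. This uses that $H$ is a rational variety together with a factoriality argument for products with rational varieties; see \cite[\S2]{Knop-Kraft-Vust-G-variety}. Granting this, $\Mcal$ is determined by its restrictions to $\{e\}\times X$ (automatically trivial) and to $H\times\{x_{0}\}$; if the latter is trivial, then $\Mcal$ is trivial on all of $H\times X$, and you can then construct and adjust a linearization as in your final paragraph. Without this structural input (or an equivalent argument that $H$ acts trivially on $\Pic(X)$), the proof does not go through.
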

Here, $(\mathcal{O}(X)^{\times})^H$ denotes the group of $H$-invariant elements of $\mathcal{O}(X)^{\times}$. Via the identification $\Pic^{H}(X)=\Pic([H\backslash X])$, the map $X^*(H)\to \Pic^H(X)$ is given by the construction $\lambda\mapsto \Lcal(\lambda)$ explained in section \ref{sec-vb-quot}. 

We apply the above discussion in the case of the action of $E$ on $G_k$ giving rise to the stack $\GZip^\mu=[E\backslash G_k]$. To simplify notation, we write $G$ instead of $G_k$. By \cite[\S 1.3]{Knop-Kraft-Vust-G-variety}, the natural map $X^*(G)\to \Ecal(G)$ is an isomorphism. Identifying $X^*(E)=X^*(L)$, we deduce that there is an exact sequence
\begin{equation}\label{Pic-zip}
1 \to X^*(G) \to X^{*}(L) \to \Pic(\GZip^\mu) \to \Pic(G).
\end{equation}
Since $\Pic(G)$ is finite by \cite[Proposition 4.5]{Knop-Kraft-Luna-Vust-Local-properties}, we obtain an isomorphism
\begin{equation}
    \left(X^*(L)/X^*(G)\right)\otimes_\ZZ \QQ \to \Pic(\GZip^\mu)_\QQ.
\end{equation}

\section{\texorpdfstring{Global sections on $\GZip^\mu$}{}}\label{glob-zip-ch2}

In this section, we first study in general the space $H^0(\GZip^\mu,\Vcal(\rho))$ of global sections of vector bundles over the stack $\GZip^\mu$. We then specialize to the case of line bundles.

\subsection{Brylinski–-Kostant filtration}

Before we recall the main theorem of \cite{Imai-Koskivirta-vector-bundles}, we set notation pertaining to representation theory of reductive groups. Fix an algebraic $B$-representation $\rho\colon B\to \GL(V)$ and let $V=\bigoplus_{\nu \in X^*(T)}V_\nu$ be the weight decomposition of $V$. For any $v\in V_\nu$ and $\alpha\in \Phi$, we can write uniquely
\[
 u_{\alpha}(x)v=\sum_{j \geq 0} x^j E_{\alpha}^{(j)}(v), \quad \forall x\in \GG_{\mathrm{a}},
\]
for elements $E_{\alpha}^{(j)}(v) \in V_{\nu+j\alpha}$ (\cite[Lemma 3.3.1]{Imai-Koskivirta-vector-bundles}). Extend the definition of $E_{\alpha}^{(j)}(v)$ by additivity to any $v\in V$. This defines a map $E_{\alpha}^{(j)} \colon V \to V$ for any $j\geq 0$ and $\alpha\in \Phi$. When $j<0$, put $E_{\alpha}^{(j)}=0$.

Write $\sigma\in \Gal(k/\FF_p)$ for the $p$-power Frobenius element. Let $\wp \colon T \to T$ be the Lang map defined by $g \mapsto g \varphi(g)^{-1}$. It induces an isomorphism 
\begin{equation}\label{def-Pstar}
 \wp_* \colon X_*(T)_{\RR} \stackrel{\sim}{\longrightarrow} X_*(T)_{\RR}; \  \delta \mapsto \wp \circ \delta =\delta - p \sigma(\delta).     
\end{equation}
For $\alpha\in \Delta$, set $\delta_{\alpha}=\wp_*^{-1}(\alpha^{\vee})$. For $\alpha\in \Delta^P$, define an integer $m_\alpha$ by 
\begin{equation}\label{malpha-equ}
 m_{\alpha} =\min \{ m \geq 1 \mid 
 \sigma^{-m}(\alpha) \notin I \}. 
\end{equation}
For example, if $P$ is defined over $\FF_p$, then $m_\alpha=1$ for all $\alpha\in \Delta^P$.

Let $m\geq 1$ be an integer and let $\Xi=(\alpha_1, \ldots, \alpha_m) \in \Phi^m$ be an $m$-tuple of roots. Suppose $H$ is a closed subgroup scheme of $G$ contaning 
$T$ and $U_{\alpha_i}$ for all $1 \leq i \leq m$. 
Let $V$ be a finite dimensional algebraic representation of $H$. For $\mathbf{a}=(a_1,\ldots,a_m) \in (k^{\times})^m$ and 
$\mathbf{r}=(r_1,\ldots,r_m) \in \RR^m$, define
\begin{align*}
 (\ZZ^m)_{\mathbf{r}} &= \left\{ (n_1,\ldots,n_m) \in \ZZ^m \relmiddle| \sum_{i=1}^m n_i r_i =0 
 \right\}, \\ 
 \Lambda_{\Xi,\mathbf{r}} &= \left\{ \sum_{i=1}^m n_i \alpha_i \relmiddle| 
 (n_1,\ldots,n_m) \in (\ZZ^m)_{\mathbf{r}}
 \right\}. 
\end{align*}
For a character $\nu\in X^*(T)$, write $[\nu]$ for its coset in the quotient $X^*(T)/\Lambda_{\Xi,\mathbf{r}}$. Furthermore, define
\[
 V_{[\nu]}=\bigoplus_{\nu \in [\nu]} V_{\nu} . 
\] 
Let $\mathbf{j}=(j_1,\ldots,j_m) \in \ZZ^m$ and write again $[\mathbf{j}] \in \ZZ^m/(\ZZ^m)_{\mathbf{r}}$ for its residue class. Define 
\begin{align*}
 [\mathbf{j}] \cdot \mathbf{r} 
 &= \sum_{i=1}^m j_i r_i \in \RR, \\ 
 [\nu] +[\mathbf{j}] \cdot \Xi 
 &= \left[ \nu +\sum_{i=1}^m j_i \alpha_i \right] \in 
 X^*(T)/\Lambda_{\Xi,\mathbf{r}}
\end{align*}
(note that these elements are well-defined). Let $[\nu] \in X^*(T)/\Lambda_{\Xi,\mathbf{r}}$ and let $\delta \colon X^*(T) \to \RR$ be any function. We define a subspace
$\fil_{\delta}^{\Xi,\mathbf{a},\mathbf{r}} V_{[\nu]}\subset V_{[\nu]}$ as follows:

\begin{equation*}
 \fil_{\delta}^{\Xi,\mathbf{a},\mathbf{r}} V_{[\nu]}\colonequals \bigcap_{[\mathbf{j}] \in \ZZ^m/(\ZZ^m)_{\mathbf{r}}} 
 \bigcap_{\substack{\chi \in [\nu] +[\mathbf{j}] \cdot \Xi, \\ [\mathbf{j}] \cdot \mathbf{r} > \delta(\chi)}}
 \Ker \left( \sum_{\mathbf{j} \in [\mathbf{j}]} 
 \pr_{\chi} \circ  a_1^{j_1} E_{\alpha_1}^{(j_1)} \circ \cdots \circ a_m^{j_m} E_{\alpha_m}^{(j_m)} \colon V_{[\nu]
 } \to 
 V_{\chi} 
 \right) 
\end{equation*}
where $\pr_{\chi} \colon V_{[\nu] +[\mathbf{j}] \cdot \Xi} \to V_{\chi}$ is the natural projection. This filtration is similar to the Brylinski–-Kostant filtration of a representation.

\subsection{The space of global sections}\label{glob-subsec}

We explain the results of \cite{Imai-Koskivirta-vector-bundles}, where N. Imai and the author determined explicitly the space of global sections $H^0(\GZip^\mu,\Vcal(\rho))$ for a general algebraic $P$-representation $\rho$. We first recall the following easy result (\cite[Lemma 1.2.1]{Koskivirta-automforms-GZip}):
\begin{proposition}\label{prop-H0-dim}
For any algebraic representation $\rho\colon P\to \GL(V)$, we have
\begin{equation}
 \dim_k H^0(\GZip^\mu,\Vcal(\rho)) \leq \dim_k(V)
\end{equation}
In particular, for any $\lambda\in X^*(L)$, the $k$-vector space $H^0(\GZip^\mu,\Lcal(\lambda))$ has dimension $\leq 1$.
\end{proposition}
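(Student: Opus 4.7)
The plan is to exploit the fact that $\GZip^{\mu}$ has a unique open point, namely the $\mu$-ordinary stratum $\Ucal_{\mu}$, whose underlying orbit $U_{\mu}\subset G_k$ is the $E$-orbit of the identity element $1\in G_k$ (as recorded in \S\ref{subsec-zipstrata}). The whole proposition will follow from the observation that a global section is determined by its value at $1$.

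First I would unwind the definition of global sections. Since $\GZip^{\mu}=[E\backslash G_k]$ and $\Vcal(\rho)$ is the vector bundle associated to the $E$-representation $\rho\circ \pr_1\colon E\to P\to \GL(V)$, as recalled in \S\ref{sec-vb-quot}, a global section of $\Vcal(\rho)$ is the same datum as a morphism of $k$-varieties $f\colon G_k\to V$ which is $E$-equivariant in the sense that
\begin{equation*}
f(xgy^{-1})=\rho(x)f(g), \qquad \forall (x,y)\in E, \ \forall g\in G_k.
\end{equation*}

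Next I would construct the evaluation map $\ev_1\colon H^0(\GZip^\mu,\Vcal(\rho))\to V$ sending $f$ to $f(1)$, and show it is injective. Equivariance gives $f(xy^{-1})=\rho(x)f(1)$ for every $(x,y)\in E$, so $f|_{U_\mu}$ is completely determined by $f(1)$. Since $U_\mu$ is the open $E$-orbit of $1$ (Theorem \ref{thm-E-orb-param}), it is open and dense in the irreducible variety $G_k$; because $G_k$ is reduced and separated and $V$ is affine, two morphisms $G_k\to V$ that agree on a dense open subset coincide. Hence $f$ is determined globally by $f(1)\in V$, which gives the injectivity of $\ev_1$ and therefore the desired bound $\dim_k H^0(\GZip^\mu,\Vcal(\rho))\leq \dim_k V$. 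The line-bundle assertion is then immediate by taking $V$ to be the one-dimensional representation $k_\lambda$.

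There is no real obstacle here: the only point worth checking is that the restriction of $f$ to $U_\mu$ extends uniquely to $G_k$, but this is just separatedness together with density of $U_\mu$. (Note that one does \emph{not} need the compatibility condition $f(1)\in V^{\Stab_E(1)}$ to play any role for the upper-bound statement; it would only be relevant if one wanted to characterise the image of $\ev_1$, which is not asked for here.)
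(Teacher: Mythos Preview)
Your argument is correct and is precisely the natural one: identify global sections with $E$-equivariant maps $G_k\to V$, and use that the open orbit $U_\mu=E\cdot 1$ is dense in the irreducible variety $G_k$ to conclude that evaluation at $1$ is injective. The paper itself does not give a proof of this proposition but simply cites it as an easy result from \cite[Lemma 1.2.1]{Koskivirta-automforms-GZip}; your write-up is exactly the expected justification.
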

For a general $P$-representation $P\to \GL(V)$, the space $H^0(\GZip^\mu,\Vcal(\rho))$  of global sections can be expressed as the part of the Brylinski–-Kostant filtration of $V$ invariant under the action of a certain (non-smooth) finite algebraic subgroup $L_\varphi\subset L$ (\cite[Theorem 3.4.1]{Imai-Koskivirta-vector-bundles}). Specifically, define first $L_{\varphi}\subset E$ as the scheme-theoretical stabilizer of the element $1$. Explicitly,
\begin{equation}\label{Lvarphi-eq}
    L_\varphi = E\cap \{(x,x) \mid x\in G_k \}.
\end{equation}
This algebraic group is in general non-smooth. Via the first projection $\pr_1\colon E\to P$, identify $L_\varphi$ with an algebraic subgroup of $P$. One can then show (\cite[Lemma 3.2.1]{Koskivirta-Wedhorn-Hasse}) that $L_{\varphi}$ is a finite group-scheme contained in $L$. Furthermore, let $L_0\subset L$ be the largest algebraic subgroup contained in $L$, i.e. $L_0\colonequals \bigcap_{r\in \ZZ} \sigma^r(L)$. Then $L_{\varphi}$ can be written as a semidirect product
\begin{equation}
L_{\varphi}=L_{\varphi}^\circ\rtimes L_0(\FF_p)    
\end{equation}
where $L_{\varphi}^\circ$ is the identity component of $L_{\varphi}$, which is a finite unipotent group-scheme. When $P$ is defined over $\FF_p$, we simply have $L_0=L$ and $L_{\varphi}=L(\FF_p)$, viewed as an etale group-scheme.

For $\alpha \in \Delta^P$, let $m_\alpha$ be the integer defined by \eqref{malpha-equ}, put $\mathbf{a}_\alpha=(-1,\ldots, -1) \in (k^{\times})^{m_\alpha}$ and define
\[\Xi_{\alpha}=(-\alpha, \sigma^{-1}(\alpha),\ldots,
 \sigma^{-(m_{\alpha}-1)}(\alpha)).\]
Define also $\mathbf{r}_{\alpha}=(r_{\alpha,1}, \ldots, r_{\alpha,m_{\alpha}})$, 
where $r_{\alpha,1}=1-\langle \alpha,\delta_{\alpha}  \rangle$ and 
\[
 r_{\alpha,i}=\frac{\langle \alpha,\delta_{\alpha}  \rangle -1}{p^{i-1}}  
\]
for $2 \leq i \leq m_{\alpha}$. 
We view $\delta_{\alpha}$ as 
a function $\delta_\alpha\colon X^*(T) \to \RR$ 
given by $\delta_\alpha(\chi)= \langle \chi ,\delta_{\alpha} \rangle$.

\begin{theorem}\label{thm-main-H0}
Let $\rho\colon P\to \GL(V)$ be an algebraic $P$-representation. One has an identification
\begin{equation}\label{eq-main}
H^0(\GZip^\mu,\mathcal{V}(\rho))=V^{L_{\varphi}}\cap 
 \bigcap_{\alpha \in \Delta^P} 
 \bigoplus_{[\nu] \in X^*(T)/\Lambda_{\Xi_{\alpha},\mathbf{r}_{\alpha}}} 
 \fil_{\delta_{\alpha}}^{\Xi_{\alpha},\mathbf{a}_{\alpha},\mathbf{r}_{\alpha}} V_{[\nu]} .
\end{equation}
\end{theorem}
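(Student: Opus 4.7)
The plan is to translate global sections of $\Vcal(\rho)$ into $E$-equivariant morphisms $G_k\to V$, pin down their value at the identity via $L_\varphi$-invariance, and then characterize extension from the open $\mu$-ordinary stratum to the full space by checking extension across each codimension-one $E$-orbit.

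\medskip

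First, I would identify
\[
H^0(\GZip^\mu,\Vcal(\rho))=\{f\colon G_k\to V \ \text{morphism}\mid f(xgy^{-1})=\rho(x)f(g),\ \forall (x,y)\in E,\ g\in G_k\}.
\]
Because the $\mu$-ordinary locus $U_\mu=E\cdot 1$ is open and dense in $G_k$ and $L_\varphi=\Stab_E(1)$ (viewed in $P$ via $\pr_1$), evaluation at $1$ gives an injection
\[
H^0(\GZip^\mu,\Vcal(\rho))\hookrightarrow V^{L_\varphi},\qquad f\longmapsto f(1),
\]
and conversely every $v\in V^{L_\varphi}$ defines a unique $E$-equivariant morphism $f_v\colon U_\mu\to V$. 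So the task reduces to determining which $v\in V^{L_\varphi}$ admit an extension of $f_v$ to all of $G_k$.

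\medskip

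Next, using smoothness of $G_k$ and the stratification of Theorem \ref{thm-E-orb-param}, extension is equivalent to extension across each codimension-one stratum $Z_\alpha$ ($\alpha\in\Delta^P$), which is the $E$-orbit of $s_\alpha$. I would work locally near $s_\alpha$: choose a transverse slice at $s_\alpha$ built from the root group $U_{-\alpha}$, and parametrize a neighborhood of $s_\alpha$ as $E\cdot(u_{-\alpha}(t)s_\alpha)$ for $t\in\GG_{\mathrm a}$, with the stratum $Z_\alpha$ corresponding to $t=0$ and $U_\mu$ to $t\neq 0$. The equivariance identity for $f_v$ applied to $(u_{-\alpha}(t)s_\alpha,\ast)\in E$-action then yields an explicit formal expression for $f_v(u_{-\alpha}(t)s_\alpha)$ as a Laurent series in $t$, with coefficients built from the operators $E_{\alpha}^{(j)}$ acting on $v$.

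\medskip

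The crucial point is the coupling coming from the definition of the zip group \eqref{zipgroup}: the relation $\varphi(\theta^P_L(x))=\theta^Q_M(y)$ forces the $U_{-\alpha}$-component of the $P$-factor to be tied to the $U_{-\sigma^{-1}(\alpha)}$-component (after a Frobenius untwist), which, if $\sigma^{-1}(\alpha)\in I$, must again be balanced against $U_{-\sigma^{-2}(\alpha)}$, and so on, until one hits $\sigma^{-m_\alpha}(\alpha)\notin I$. This iterated coupling produces the tuple $\Xi_\alpha=(-\alpha,\sigma^{-1}(\alpha),\dots,\sigma^{-(m_\alpha-1)}(\alpha))$, the signs $\mathbf a_\alpha=(-1,\dots,-1)$, and the weights $\mathbf r_\alpha$ whose denominators $p^{i-1}$ record the $i$-fold Frobenius twist; the number $1-\langle\alpha,\delta_\alpha\rangle$ in $r_{\alpha,1}$ arises from solving $\wp_*(\delta_\alpha)=\alpha^\vee$ inside $X_*(T)_\QQ$. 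Requiring the resulting Laurent expansion of $f_v$ at $s_\alpha$ to have no pole, weight-component by weight-component, translates exactly into the condition
\[
v\in \bigoplus_{[\nu]\in X^*(T)/\Lambda_{\Xi_\alpha,\mathbf r_\alpha}}\fil_{\delta_\alpha}^{\Xi_\alpha,\mathbf a_\alpha,\mathbf r_\alpha}V_{[\nu]},
\]
since the kernels appearing in $\fil^{\Xi_\alpha,\mathbf a_\alpha,\mathbf r_\alpha}_{\delta_\alpha}$ are precisely the vanishing conditions for monomials of total $\mathbf r_\alpha$-degree exceeding $\delta_\alpha$ on the target weight. Intersecting over all $\alpha\in\Delta^P$, together with the $L_\varphi$-invariance at $1$, yields the formula \eqref{eq-main}.

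\medskip

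The main obstacle is the third step: honestly carrying out the Laurent expansion of $f_v$ along each slice and extracting from it the exact combinatorics of $\Xi_\alpha$, $\mathbf a_\alpha$, $\mathbf r_\alpha$. This requires simultaneously tracking (a) the non-smoothness of $L_\varphi$ and the interplay with $L_0(\FF_p)$, (b) the Frobenius cascade on the $U_{-\sigma^{-i}(\alpha)}$'s for $i<m_\alpha$, and (c) the behavior of $E_{\alpha}^{(j)}$ on weight spaces, so that the resulting vanishing conditions line up term-by-term with $\fil_{\delta_\alpha}^{\Xi_\alpha,\mathbf a_\alpha,\mathbf r_\alpha}$; the proof of sufficiency (that these conditions do produce a regular extension across $Z_\alpha$) then amounts to reversing the bookkeeping.
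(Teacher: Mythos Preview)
The paper does not contain a proof of this theorem; it is stated as a quotation of \cite[Theorem 3.4.1]{Imai-Koskivirta-vector-bundles}. However, the paper does revisit the mechanics of that proof in \S3.4 (around Lemma~\ref{sgn-prop} and Proposition~\ref{psiadapted}), and comparing your outline to that discussion is instructive.

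Your overall architecture matches the cited proof: identify $H^0(\Ucal_\mu,\Vcal(\rho))=V^{L_\varphi}$ via evaluation at $1$, then test extension across each codimension-one orbit $Z_\alpha$ separately, and read off the filtration condition from a one-parameter degeneration. The difference is in the slice. You propose the family $u_{-\alpha}(t)s_\alpha$; the argument in the cited paper (as recalled here) uses instead the map
\[
\psi_\alpha\colon E\times\AA^1\to G_k,\qquad ((x,y),t)\mapsto x\,\phi_\alpha\!\begin{pmatrix}t&1\\-1&0\end{pmatrix}y^{-1},
\]
together with the decomposition \eqref{At}, which shows that for $t\neq 0$ the element $\phi_\alpha(A(t))$ is $E$-equivalent to $\alpha^\vee(t)$. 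This choice interacts cleanly with the Lang map $\wp$ (since $\alpha^\vee=\wp_*(\delta_\alpha)$), and feeds into the theory of ``adapted morphisms'' from \cite{Koskivirta-automforms-GZip} to control the order of pole along $Z_\alpha$. Your raw slice $u_{-\alpha}(t)s_\alpha$ would need to be massaged into this $E$-equivalence class anyway, so the $\psi_\alpha$ parametrization is the more efficient packaging. Apart from this technical point, your identification of the Frobenius cascade producing $\Xi_\alpha$ and the weights $\mathbf r_\alpha$ is on target.
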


\subsection{Global sections of line bundles}

In this section, we specialize to the case when the representation $\rho\colon P\to \GL(V)$ is the one-dimensional representation attached to a character $\lambda\in X^*(L)=X^*(P)$. In this case, the statement of Theorem \ref{thm-main-H0} simplifies considerably. Recall that $H^0(\GZip^\mu,\Lcal(\lambda))$ has dimension $\leq 1$. We want to determine for which characters $\lambda\in X^*(L)$ this space is nonzero.

\begin{proposition}\label{prop-line}
For a character $\lambda\in X^*(L)$, the following are equivalent:
\begin{equivlist}
    \item The space $H^0(\GZip^\mu,\Lcal(\lambda))$ is nonzero.
    \item $\lambda$ is trivial on $L_\varphi$ and $\langle \lambda, \delta_\alpha \rangle \geq 0$ for all $\alpha\in \Delta^P$.
\end{equivlist}
Furthermore, in this case $H^0(\GZip^\mu,\Lcal(\lambda))$ is one-dimensional.    
\end{proposition}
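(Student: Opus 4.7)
The plan is to specialize Theorem \ref{thm-main-H0} to the one-dimensional $P$-representation $V = k_\lambda$ attached to $\lambda$, and to read off when the right-hand side of \eqref{eq-main} is nonzero. Since $\dim V = 1$, that right-hand side is either $V$ or zero, so proving the equivalence will simultaneously establish the one-dimensionality claim. The factor $V^{L_\varphi}$ equals $V$ if and only if $\lambda$ is trivial on $L_\varphi$, which is the first half of (ii); so the bulk of the work is to analyze, for each $\alpha \in \Delta^P$, the Brylinski--Kostant factor $\bigoplus_{[\nu]} \fil_{\delta_\alpha}^{\Xi_\alpha, \mathbf{a}_\alpha, \mathbf{r}_\alpha} V_{[\nu]}$.

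The key observation is that every root $\beta$ appearing in the tuple $\Xi_\alpha = (-\alpha, \sigma^{-1}(\alpha), \ldots, \sigma^{-(m_\alpha-1)}(\alpha))$ acts trivially on $k_\lambda$: for $\beta = -\alpha$, one has $U_{-\alpha} \subset R_{\mathrm{u}}(P)$ since $-\alpha$ is a negative root not in $L$, and the extension of $\lambda$ from $L$ to $P$ is trivial on the unipotent radical; for $\beta = \sigma^{-i}(\alpha)$ with $1 \leq i \leq m_\alpha - 1$, the definition \eqref{malpha-equ} of $m_\alpha$ forces $\sigma^{-i}(\alpha) \in I$, so $U_{\sigma^{-i}(\alpha)} \subset L^{\mathrm{der}}$, on which $\lambda$ vanishes. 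Consequently $E_\beta^{(j)}$ kills $V_\lambda$ for every $j \geq 1$, while $E_\beta^{(0)}$ is the identity.

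From this, any composition $a_1^{j_1} E_{\alpha_1}^{(j_1)} \circ \cdots \circ a_m^{j_m} E_{\alpha_m}^{(j_m)}$ is zero on $V_\lambda$ unless $\mathbf{j} = (0,\ldots,0)$, in which case it is the identity. Since $V$ is supported at the single weight $\lambda$, only the coset $[\nu] = [\lambda]$ contributes to the direct sum, and the summed operator defining the kernel is nontrivial only when $[\mathbf{j}] = [0]$ and $\chi = \lambda$, where it equals $\pr_\lambda$. The resulting kernel condition $[0]\cdot \mathbf{r}_\alpha = 0 > \langle \lambda, \delta_\alpha \rangle$ is therefore active exactly when $\langle \lambda, \delta_\alpha \rangle < 0$, in which case $\fil V_{[\lambda]} = 0$; otherwise $\fil V_{[\lambda]} = V_\lambda$. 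Intersecting over $\alpha \in \Delta^P$ and with $V^{L_\varphi}$ yields the stated equivalence, and whenever (ii) holds the intersection is the one-dimensional space $V_\lambda$.

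The only delicate point is the careful verification of the triviality of the $U_\beta$-actions above, which rests on the sign conventions governing $B \subset P = P_-(\mu)$ and on the defining property of $m_\alpha$. Once that is in hand, the proposition reduces to formal bookkeeping from Theorem \ref{thm-main-H0}.
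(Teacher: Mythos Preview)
Your proof is correct and follows the same approach as the paper: specialize Theorem~\ref{thm-main-H0} to the one-dimensional representation and read off the conditions. The paper dispatches the Brylinski--Kostant filtration computation with the phrase ``for dimension reasons,'' whereas you actually carry out the verification that each $E_\beta^{(j)}$ with $\beta$ an entry of $\Xi_\alpha$ and $j\geq 1$ vanishes on $V_\lambda$, so that only $[\mathbf{j}]=[0]$ and $\chi=\lambda$ can impose a nontrivial kernel condition; this is a welcome expansion of what the paper leaves implicit.
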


\begin{proof}
    Write $V_I(\lambda)$ for the underlying one-dimensional vector space of $\lambda$. In this case, for dimension reasons we find that 
\begin{equation}
\fil_{\delta_{\alpha}}^{\Xi_{\alpha},\mathbf{a}_{\alpha},\mathbf{r}_{\alpha}} V_{\lambda}=V_\lambda=V_I(\lambda)    
\end{equation}
if $\langle \lambda, \delta_\alpha \rangle \geq 0$ and $\fil_{\delta_{\alpha}}^{\Xi_{\alpha},\mathbf{a}_{\alpha},\mathbf{r}_{\alpha}} V_{\lambda}=0$ otherwise. On the other hand, the space $V_I(\lambda)^{L_\varphi}$ is zero if and only if $\lambda$ is trivial on the finite subgroup $L_\varphi\subset L$. The equivalence of (i) and (ii) then follows easily from Theorem \ref{thm-main-H0}. When this condition is satisfied, $H^0(\GZip^\mu,\Lcal(\lambda))$ is one-dimensional by Proposition \ref{prop-H0-dim}.
\end{proof}

\begin{corollary}\label{cor-line-H0}
Let $\lambda\in X^*(L)$ and assume that $\langle \lambda, \delta_\alpha \rangle \geq 0$ for all $\alpha\in \Delta^P$. Then there is an integer $N\geq 1$ such that $H^0(\GZip^\mu,\Lcal(\lambda)^{\otimes N}) \neq 0$.
\end{corollary}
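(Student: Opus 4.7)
The plan is to reduce the corollary to an application of Proposition \ref{prop-line}(ii) applied to a suitable positive multiple $N\lambda$ of $\lambda$. There are two conditions to check for $N\lambda$: first, that $\langle N\lambda, \delta_\alpha\rangle \geq 0$ for all $\alpha \in \Delta^P$, and second, that $N\lambda$ is trivial on the finite group scheme $L_\varphi$. The first condition is automatic, because multiplication by $N \geq 1$ preserves the sign of each pairing $\langle \lambda, \delta_\alpha\rangle$. So the entire task reduces to producing $N\geq 1$ such that the restriction of $N\lambda$ to $L_\varphi$ is trivial.

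For this I would appeal to the structural description of $L_\varphi$ recalled in Section \ref{glob-subsec}: one has a semidirect product decomposition $L_\varphi = L_\varphi^\circ \rtimes L_0(\FF_p)$, where $L_\varphi^\circ$ is a finite unipotent group scheme and $L_0(\FF_p)$ is the finite group of $\FF_p$-points of the largest $\FF_p$-algebraic subgroup $L_0\subset L$. Any character of a unipotent group scheme is trivial, so every character of $L_\varphi$ factors through the finite quotient $L_0(\FF_p)$. Hence $X^*(L_\varphi)$ is identified with a subgroup of $\Hom(L_0(\FF_p), \GG_{\mathrm{m}}(k))$, which is a finite abelian group.

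Consequently, the image of $\lambda$ under the restriction map $X^*(L) \to X^*(L_\varphi)$ has finite order. Taking $N\geq 1$ to be this order, $N\lambda$ is trivial on $L_\varphi$ and still satisfies $\langle N\lambda, \delta_\alpha\rangle \geq 0$ for all $\alpha \in \Delta^P$. Proposition \ref{prop-line} then gives $H^0(\GZip^\mu, \Lcal(\lambda)^{\otimes N}) = H^0(\GZip^\mu, \Lcal(N\lambda)) \neq 0$, as desired. No serious obstacle arises: the argument is a direct consequence of Proposition \ref{prop-line} together with the finiteness of the character group of $L_\varphi$, which itself follows from $L_\varphi$ being an extension of a finite étale group by a unipotent group scheme.
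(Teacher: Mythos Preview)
Your proof is correct and follows essentially the same approach as the paper: reduce to Proposition \ref{prop-line} by replacing $\lambda$ with a multiple $N\lambda$ that is trivial on the finite group scheme $L_\varphi$. Your use of the semidirect product decomposition $L_\varphi = L_\varphi^\circ \rtimes L_0(\FF_p)$ is more detailed than necessary---the paper simply invokes the finiteness of $L_\varphi$, which already forces every character to have finite order---but the argument is sound.
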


\begin{proof}
By considering an appropriate multiple $N\lambda$ ($N\geq 1$), we can arrange that $N\lambda$ vanishes on the finite group $L_\varphi$. Since $\Lcal(N\lambda)=\Lcal(\lambda)^{\otimes N}$, the result follows from Proposition \ref{prop-line}.
\end{proof}

The set of $\lambda$ such that some positive power of $\Lcal(\lambda)$ admits nonzero global sections is thus defined inside $X^*(L)$ as the intersection of the half-planes given by $\langle \lambda, \delta_\alpha \rangle \geq 0$. We note the following:

\begin{proposition}\label{prop-lin-indep}
The linear forms $\{\lambda\mapsto \langle \lambda,\delta_\alpha\rangle\}_{\alpha\in \Delta^P}$ are linearly independent, when viewed as linear forms on $X^*(L)_\QQ$. Equivalently, they form a basis of the dual vector space of $\left(X^*(L)/X^*(G)\right)\otimes_{\ZZ} \QQ$
\end{proposition}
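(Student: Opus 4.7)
The plan is to translate the statement into linear algebra on $V\colonequals X_*(T)_\QQ$. Introduce the subspaces $W_I\colonequals \Span_\QQ\{\beta^\vee : \beta \in I\}$, $W_P\colonequals \Span_\QQ\{\alpha^\vee : \alpha \in \Delta^P\}$, and $V_{\mathrm{rt}}\colonequals W_I \oplus W_P=\Span_\QQ\{\alpha^\vee:\alpha\in\Delta\}$. Since $\sigma$ permutes $\Delta$, both $V_{\mathrm{rt}}$ and $X^*(G)_\QQ\subset X^*(T)_\QQ$ are $\sigma$-stable. The annihilator of $X^*(L)_\QQ$ in $V$ equals $W_I$, so a direct dimension count yields $\dim_\QQ(X^*(L)_\QQ/X^*(G)_\QQ)=|\Delta|-|I|=|\Delta^P|$, matching the number of linear forms.

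To get the ``equivalently'' assertion, I would first check that each linear form $\lambda\mapsto \langle \lambda,\delta_\alpha\rangle$ vanishes on $X^*(G)_\QQ$. Consider the adjoint endomorphism $\wp^*\colon X^*(T)_\QQ\to X^*(T)_\QQ$, $\lambda\mapsto \lambda-p\sigma^{-1}(\lambda)$; it is an isomorphism and satisfies $\langle \wp^*(\lambda),\delta\rangle=\langle \lambda,\wp_*(\delta)\rangle$ (using $\langle\sigma(\chi),\sigma(\eta)\rangle=\langle\chi,\eta\rangle$). As $X^*(G)_\QQ$ is $\sigma$-stable, it is $(\wp^*)^{-1}$-stable; writing $\lambda=\wp^*(\mu)$ with $\mu\in X^*(G)_\QQ$ gives $\langle \lambda,\delta_\alpha\rangle=\langle \mu,\wp_*(\delta_\alpha)\rangle=\langle \mu,\alpha^\vee\rangle=0$. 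Combined with the dimension count, this reduces the proposition to proving linear independence of the $|\Delta^P|$ forms on $X^*(L)_\QQ$.

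For linear independence, suppose $\xi\colonequals \sum_{\alpha\in\Delta^P}c_\alpha\delta_\alpha$ annihilates $X^*(L)_\QQ$, so $\xi\in W_I$. Applying $\wp_*$ gives $\sum c_\alpha\alpha^\vee=\wp_*(\xi)\in W_P\cap \wp_*(W_I)$. Since $\{\alpha^\vee:\alpha\in\Delta\}$ are linearly independent, it suffices to show $W_P\cap \wp_*(W_I)=0$. Using $\wp_*$-stability of $V_{\mathrm{rt}}$, let $\pi_I\colon V_{\mathrm{rt}}\to W_I$ be the projection with kernel $W_P$. Then $W_P\cap\wp_*(W_I)=\ker(\pi_I\circ\wp_*|_{W_I})$, and the claim reduces to showing that the endomorphism $1_{W_I}-pA$ of $W_I$ is invertible, where $A\colonequals \pi_I\circ\sigma|_{W_I}$.

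The main obstacle is this last invertibility statement. On the basis $\{\beta^\vee\}_{\beta\in I}$, the map $A$ sends $\beta^\vee$ to $\sigma(\beta)^\vee$ when $\sigma(\beta)\in I$ and to $0$ otherwise. Partitioning $I$ according to the $\sigma$-orbits in $\Delta$, the matrix of $A$ is block-diagonal: blocks corresponding to $\sigma$-orbits entirely contained in $I$ are cyclic permutation matrices (eigenvalues are roots of unity), while blocks for the ``chain'' portions of $\sigma$-orbits inside $I$ that eventually leave $I$ are nilpotent (eigenvalue $0$). Hence every eigenvalue of $A$ is either $0$ or a root of unity, so the eigenvalues of $1-pA$ take the form $1$ or $1-p\zeta$ with $\zeta$ a root of unity, none of which can vanish since $p\geq 2$. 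Thus $1-pA$ is invertible, completing the plan.
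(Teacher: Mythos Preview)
Your argument is correct. Both your proof and the paper's establish the same two ingredients (vanishing on $X^*(G)_\QQ$ and a dimension count), but you handle the linear-independence core differently. The paper works on the character side: assuming $\lambda\in X^*(L)$ kills every $\delta_\alpha$ with $\alpha\in\Delta^P$, it uses the relation $\alpha^\vee=\delta_\alpha-p\delta_{\sigma(\alpha)}$ together with $\langle\lambda,\beta^\vee\rangle=0$ for $\beta\in I$ to propagate along the $\sigma$-orbit of $\alpha$ until it hits $\Delta^P$ again, concluding $\langle\lambda,\alpha^\vee\rangle=0$ for all $\alpha\in\Delta$ and hence $\lambda\in X^*(G)_\QQ$. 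You instead dualize to the cocharacter side, reduce the question to the injectivity of $1_{W_I}-pA$ on $W_I$, and settle it by a spectral analysis of $A=\pi_I\circ\sigma|_{W_I}$ (eigenvalues are roots of unity or zero, so $1-p\zeta\neq 0$). The paper's recursion is a bit quicker and avoids introducing the auxiliary operator, while your formulation isolates cleanly \emph{why} the argument works (the gap $|p|>1$) and would generalize immediately if $p\sigma$ were replaced by any Frobenius-type twist with spectral radius exceeding $1$ on the root lattice. Note also that the case of $\sigma$-orbits entirely inside $I$, which your eigenvalue argument handles via the root-of-unity blocks, simply does not arise in the paper's approach since it only ever starts from $\alpha\in\Delta^P$.
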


\begin{proof}
Let $D\subset X^*(L)$ be the set of $\lambda\in X^*(L)$ such that $\langle \lambda,\delta_\alpha\rangle=0$ for all $\alpha\in \Delta^P$. Let $\lambda\in D$ and $\alpha\in \Delta^P$. By definition of $\delta_\alpha$, we have 
\begin{equation}
    \delta_\alpha-p\sigma(\delta_\alpha)=\delta_\alpha-p\delta_{\sigma(\alpha)}=\alpha^\vee.
\end{equation}
If $\sigma(\alpha)\in \Delta^P$, then we obtain $\langle \lambda,\alpha^\vee\rangle=0$. Otherwise, we have $\sigma(\alpha)\in I$. Let $r\geq 1$ be the smallest integer such that $\sigma^r(\alpha)\in \Delta^P$. Then, since $\lambda\in X^*(L)$ and $\sigma(\alpha),\dots,\sigma^{r-1}(\alpha)$ are in $I$, we obtain inductively
\begin{equation*}
    \langle \lambda,\alpha^\vee\rangle= -p\langle \lambda,\delta_{\sigma(\alpha)}\rangle \\
= -p^r\langle \lambda,\delta_{\sigma^r(\alpha)}\rangle =0.
\end{equation*}
Therefore, $\lambda$ is orthogonal to all $\alpha^\vee$ for $\alpha\in \Delta$, which implies that $\lambda\in X^*(G)$. Since we have $\rank(X^*(L))-\rank(X^*(G)) = |\Delta^P|$, dimension considerations show that the linear forms $\lambda\mapsto \langle \lambda,\delta_\alpha\rangle$ on $X^*(L)_\QQ$ are linearly independent.
\end{proof}

We finish this section with a useful lemma. Let $H$ be a smooth connected algebraic $k$-group acting on a $k$-variety $X$. For $\lambda\in X^*(H)$, denote by $\Lcal(\lambda)$ the associated line bundle on the quotient stack $[H\backslash X]$ (section \ref{sec-vb-quot}). By definition, the space of global sections of $\Lcal(\lambda)$ can be identified with the space of regular maps $f\colon X\to \AA^1$ satisfying
\begin{equation}\label{H0-quot-eq}
    f(h\cdot x)=\lambda(h)f(x),\quad h\in H, x\in X.
\end{equation}
We will use the following lemma:
\begin{lemma}\label{lemma-B-OK}
Let $B_H\subset H$ be a Borel subgroup (in the sense that $H/B_H$ is a projective variety) and $\lambda\in X^*(H)$ a character. Let $f\colon X\to \AA^1$ be a regular map satisfying $ f(h\cdot x)=\lambda(h)f(x)$ for all $h\in B_H$ and $x\in X$. Then $f$ satisfies the same relation for all $h\in H$ and $x\in X$. In other words, $f$ identifies with a global section of $\Lcal(\lambda)$ on $[H\backslash X]$.
\end{lemma}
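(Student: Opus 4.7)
The plan is to reduce the problem to the classical fact that a regular map from a connected projective variety to $\AA^1$ is constant. Fix $x\in X$ and consider the regular function
\begin{equation*}
\psi_x\colon H\to \AA^1,\qquad h\mapsto \lambda(h)^{-1}f(h\cdot x).
\end{equation*}
This is well-defined and regular because $\lambda$ takes values in $\GG_{\mathrm{m}}$, so $\lambda(h)^{-1}$ is a regular function on $H$. The hypothesis precisely says that $\psi_x(b)=f(x)$ for every $b\in B_H$.

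Next I would verify that $\psi_x$ is left $B_H$-invariant. For $b\in B_H$ and $h\in H$, using the hypothesis applied to the point $h\cdot x\in X$ and the fact that $\lambda$ is a character:
\begin{equation*}
\psi_x(bh)=\lambda(bh)^{-1}f(bh\cdot x)=\lambda(b)^{-1}\lambda(h)^{-1}\lambda(b)f(h\cdot x)=\psi_x(h).
\end{equation*}
Hence $\psi_x$ factors through the quotient map $H\to B_H\backslash H$. Since $H/B_H$ is projective by assumption and $H$ is smooth connected, the quotient $B_H\backslash H$ is a connected projective variety. Any regular map from a connected projective variety to $\AA^1$ is constant, so $\psi_x$ is a constant function on $H$. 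Evaluating at $h=1\in B_H$ gives $\psi_x\equiv f(x)$, i.e.\ $f(h\cdot x)=\lambda(h)f(x)$ for every $h\in H$.

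Since $x\in X$ was arbitrary, this is the desired relation \eqref{H0-quot-eq}, so $f$ defines a global section of $\Lcal(\lambda)$ on $[H\backslash X]$. The only substantive ingredient is the projectivity of $B_H\backslash H$; there is no real obstacle, and the argument works verbatim under the running hypothesis that $H$ is smooth and connected (\S\ref{sec-pic}).
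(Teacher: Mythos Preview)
Your proof is correct and essentially identical to the paper's: both fix $x$, form the function $h\mapsto \lambda(h)^{-1}f(h\cdot x)$, check left $B_H$-invariance, and use that a regular map from the projective variety $B_H\backslash H$ to $\AA^1$ is constant. Your write-up is in fact slightly more careful in justifying the evaluation at $h=1$.
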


\begin{proof}
Let $x\in X$ be fixed, and consider the map $\psi\colon H\to \AA^1$ defined by
\begin{equation}
    \psi(h)\colonequals \frac{f(h\cdot x)}{\lambda(h)}.
\end{equation}
It is clear that $\psi$ is a regular map. By assumption, for any $b\in B_H$, $h\in H$, we have $\psi(bh)=\psi(h)$. Since the quotient $B_H\backslash H$ is proper, we deduce that $\psi$ is constant, hence equal to $f(g)$. The result follows.
\end{proof}

By definition, a section $f\in H^0(\GZip^\mu,\Lcal(\lambda))$ (for $\lambda\in X^*(L)$) can be interpreted as a regular map $f\colon G_k\to \AA^1$ satisfying the relation
\begin{equation}\label{Gzip-glob-eq}
    f(xgy^{-1}) = \lambda(x)f(g)
\end{equation}
for all $(x,y)\in E$ and $g\in G_k$. Now, fix a Borel pair $(B,T)$ as explained in section \ref{sec-gp-not}. Define a subgroup $E'\subset E$ by 
\begin{equation}\label{Eprime-eq}
E'\colonequals E\cap (B\times G).    
\end{equation}
In other words, $E'$ is the subgroup of pairs $(x,y)\in E$ such that $x\in B$. One sees easily that $E/E'\simeq P/B$ (hence is projective). From Lemma \ref{lemma-B-OK}, we deduce the following: 

\begin{corollary}\label{cor-Eprime-E}
Let $\lambda\in X^*(L)$ and let $f\colon G_k\to \AA^1$ be a regular map satisfying \eqref{Gzip-glob-eq} for all $(x,y)\in E'$ and $g\in G_k$. Then the same relation holds for all $(x,y)\in E$ and $g\in G_k$. Hence $f$ identifies with an element of $H^0(\GZip^\mu,\Lcal(\lambda))$.
\end{corollary}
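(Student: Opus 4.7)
The plan is simply to invoke Lemma \ref{lemma-B-OK} applied to the action of $H = E$ on $X = G_k$ with the subgroup $B_H = E'$ and the character $\chi \in X^*(E)$ defined by $\chi(x,y) = \lambda(\theta_L^P(x))$ (under the identification $X^*(E) = X^*(L)$). The transformation law \eqref{Gzip-glob-eq} is exactly the relation \eqref{H0-quot-eq} for this character, so a regular map $f\colon G_k \to \AA^1$ satisfies \eqref{Gzip-glob-eq} for all $(x,y) \in E$ if and only if it corresponds to a global section of $\Lcal(\lambda)$ on $\GZip^\mu$.

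The only thing to check in order to apply the lemma is that $E/E'$ is projective, i.e.\ that $E'$ plays the role of a Borel subgroup in $E$. This is immediate: the first projection $\pr_1 \colon E \to P$ is surjective with kernel $\{1\} \times R_{\mathrm{u}}(Q)$ (a unipotent group), and by construction $E' = \pr_1^{-1}(B)$. Hence $\pr_1$ descends to an isomorphism $E/E' \xrightarrow{\sim} P/B$, and the latter is a partial flag variety of $G_k$, which is projective.

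With this verified, Lemma \ref{lemma-B-OK} yields directly that the $E'$-equivariance of $f$ automatically extends to $E$-equivariance, proving the corollary. There is no serious obstacle: the entire content lies in identifying $E' \subset E$ as the preimage of the Borel $B \subset P$ under $\pr_1$, so that the quotient $E/E'$ inherits projectivity from $P/B$.
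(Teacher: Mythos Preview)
Your proposal is correct and follows essentially the same approach as the paper: the paper simply notes that $E/E' \simeq P/B$ is projective and deduces the corollary directly from Lemma \ref{lemma-B-OK}. You have merely spelled out the isomorphism $E/E' \simeq P/B$ in slightly more detail via the first projection $\pr_1$, which is exactly the ``one sees easily'' step in the paper.
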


\subsection{\texorpdfstring{$\mu$-ordinary Hasse invariants}{}}

We recall the main theorem of \cite{Koskivirta-Wedhorn-Hasse} by the author and T. Wedhorn. In \loccitn, the following result was proved:

\begin{proposition}[{\cite[Theorem 5.1.4]{Koskivirta-Wedhorn-Hasse}}] \label{prop-KosWed}
Assume that $\lambda\in X^*(L)$ satisfies
\begin{equation}\label{char-L-reg}
\langle \lambda,\alpha^\vee \rangle<0, \quad \forall \alpha\in \Delta^P.
\end{equation}
Then the space $H^0(\GZip^\mu,\Lcal(\lambda)^{\otimes N_\mu})$ is nonzero for a certain integer $N_\mu\geq 1$. Furthermore, any nonzero section $h\in H^0(\GZip^\mu,\Lcal(\lambda)^{\otimes N_\mu})$ is a $\mu$-ordinary Hasse invariant, in the sense that the non-vanishing locus of $h$ is precisely the complement of the $\mu$-ordinary locus $\Ucal_\mu\subset \GZip^\mu$.
\end{proposition}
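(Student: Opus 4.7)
The plan is to combine Proposition \ref{prop-line} (existence of global sections of line bundles) with an analysis of the divisor classes of the codimension-one strata in $\GZip^\mu$.

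First I would verify that the hypothesis $\langle \lambda,\alpha^\vee\rangle<0$ for all $\alpha\in\Delta^P$ in fact implies the strict inequality $\langle \lambda,\delta_\alpha\rangle>0$ for all $\alpha\in\Delta^P$. Choose an integer $d\geq 1$ such that $\sigma^d$ acts trivially on $X_*(T)$ (possible because $G$ splits over some $\FF_{p^d}$). Iterating the defining relation $\delta_\alpha=\alpha^\vee+p\sigma(\delta_\alpha)$ yields
\begin{equation}
(1-p^d)\delta_\alpha=\sum_{i=0}^{d-1}p^i\sigma^i(\alpha^\vee),
\end{equation}
so that
\begin{equation}
\langle \lambda,\delta_\alpha\rangle=-\frac{1}{p^d-1}\sum_{i=0}^{d-1}p^i\langle \lambda,\sigma^i(\alpha)^\vee\rangle.
\end{equation}
Since $\lambda\in X^*(L)$ vanishes on all coroots of $I$, terms with $\sigma^i(\alpha)\in I$ contribute zero; terms with $\sigma^i(\alpha)\in\Delta^P$ are strictly negative by hypothesis. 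The $i=0$ term is strictly negative, so the sum is negative and $\langle \lambda,\delta_\alpha\rangle>0$. Applying Corollary \ref{cor-line-H0} then produces an integer $N_\mu\geq 1$ with $H^0(\GZip^\mu,\Lcal(\lambda)^{\otimes N_\mu})\neq 0$, settling the existence claim.

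For the Hasse invariant assertion, let $h$ be any nonzero section of $\Lcal(\lambda)^{\otimes N_\mu}$. Its non-vanishing locus $D(h)$ is open and $E$-stable, hence a union of zip strata containing the open stratum $U_\mu$. Writing $\div(h)=\sum_{\alpha\in\Delta^P}n_\alpha[\overline{Z_\alpha}]$ with $n_\alpha\geq 0$, it suffices to show every $n_\alpha$ is strictly positive. By the exact sequence \eqref{Pic-zip}, $\Pic(\GZip^\mu)_\QQ$ has rank $|\Delta^P|$; since the complement of $U_\mu$ is covered by the $\overline{Z_\alpha}$ and the Picard group of the residual gerbe $[E\backslash U_\mu]$ over the $\mu$-ordinary point is torsion (its stabilizer $L_\varphi$ being finite), the classes $[\overline{Z_\alpha}]$ form a $\QQ$-basis of $\Pic(\GZip^\mu)_\QQ$. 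By Proposition \ref{prop-lin-indep}, the linear forms $\langle\cdot,\delta_\beta\rangle$ form a basis of the dual of $(X^*(L)/X^*(G))_\QQ$. If one establishes a duality relation $\langle[\overline{Z_\alpha}],\delta_\beta\rangle=c_\alpha\delta_{\alpha,\beta}$ for positive constants $c_\alpha$, then pairing $[\Lcal(\lambda)^{\otimes N_\mu}]=\sum_\alpha n_\alpha[\overline{Z_\alpha}]$ against $\delta_\beta$ yields $n_\beta=\langle N_\mu\lambda,\delta_\beta\rangle/c_\beta>0$ by the first step, hence $D(h)=\Ucal_\mu$.

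The hard part will be the duality relation between $\{[\overline{Z_\alpha}]\}$ and $\{\delta_\beta\}$, a geometric input not developed in the excerpt. I would attempt it by passing to the stack of $G$-zip flags, where the codimension-one strata admit a transparent description in terms of Schubert-type divisors, or via a direct local computation near a generic point of each $Z_\alpha$: one writes $\overline{Z_\alpha}$ locally as the vanishing locus of an explicit $E'$-semi-invariant regular function on $G_k$ (promoted to $E$-invariance via Corollary \ref{cor-Eprime-E}) and reads off its $T$-weight. Positivity $c_\alpha>0$ then reflects the effectivity of the divisor $[\overline{Z_\alpha}]$; the orthogonality for $\alpha\neq\beta$ follows because such a semi-invariant is invertible away from $\overline{Z_\alpha}$, and in particular in a neighborhood of a generic point of $Z_\beta$.
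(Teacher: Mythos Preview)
Your first step---deducing $\langle\lambda,\delta_\alpha\rangle>0$ from the hypothesis and then invoking Corollary~\ref{cor-line-H0}---is exactly the argument the paper gives in the paragraph following Proposition~\ref{prop-KosWed}.

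For the Hasse-invariant assertion, you correctly identify the crux as a duality between the divisor classes $[\overline{Z_\alpha}]$ and the linear forms $\langle\,\cdot\,,\delta_\beta\rangle$, but you leave this relation unproven and only sketch possible attacks. The paper does not set up this duality abstractly; instead it proves directly (Lemma~\ref{sgn-prop}) that for any nonzero $h\in H^0(\Ucal_\mu,\Lcal(\lambda)^{\otimes N_\mu})$ one has $\sgn(\mult_\alpha(h))=\sgn(\langle\lambda,\delta_\alpha\rangle)$ for each $\alpha\in\Delta^P$, which immediately gives every $n_\alpha>0$. The tool is an explicit ``adapted morphism'' $\psi_\alpha\colon E\times\AA^1\to G_k$, $((x,y),t)\mapsto x\,\phi_\alpha\!\left(\begin{smallmatrix}t&1\\-1&0\end{smallmatrix}\right)y^{-1}$, whose image lies in $U_\mu$ for $t\neq 0$ and in $Z_\alpha$ for $t=0$ (Proposition~\ref{psiadapted}); the sign of $\mult_\alpha(h)$ then agrees with the sign of the $t$-adic order of $h\circ\psi_\alpha$ at $t=0$. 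Decomposing $A(t)=\left(\begin{smallmatrix}t&1\\-1&0\end{smallmatrix}\right)$ as in~\eqref{At} and using that $u_{-\alpha}(\cdot)\in R_{\mathrm{u}}(P)$ and $u_{\alpha}(\cdot)\in R_{\mathrm{u}}(Q)$ reduces one to computing $h(\alpha^\vee(t))$; the relation $\alpha^\vee=\wp_*(\delta_\alpha)$ then shows this is a nonzero constant times $t$ raised to a positive multiple of $\langle\lambda,\delta_\alpha\rangle$. This is precisely the ``direct local computation near a generic point of $Z_\alpha$'' you allude to, but realised via a one-parameter degeneration rather than by exhibiting a global semi-invariant cutting out $\overline{Z_\alpha}$ (the latter would require $\Pic(G)=0$; see the discussion before Corollary~\ref{cor-semisimple}, where your duality relation does appear, as a \emph{consequence} of Lemma~\ref{sgn-prop}).
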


By pullback via the map $\zeta\colon S_K\to \GZip^\mu$, the authors also deduced the existence of such sections on the Shimura variety $S_K$. We explain below the connection with Corollary \ref{cor-line-H0}. For each $\alpha\in \Delta^P$, fix an integer $d_\alpha\geq 1$ such that $\alpha$ is defined over $\FF_{p^{d_\alpha}}$. We can write
\begin{equation}
    \delta_\alpha=-\frac{1}{p^{d_\alpha}-1}\sum_{i=0}^{d_\alpha-1} p^i \sigma^i(\alpha^\vee).
\end{equation}
The condition $\langle \lambda, \delta_\alpha \rangle \geq 0$ can thus be written as
\begin{equation}\label{eq-sum}
    \sum_{i=0}^{d_\alpha-1} p^i \langle \lambda,\sigma^i(\alpha^\vee)\rangle \leq 0.
\end{equation}
We claim that if $\lambda \in X^*(L)$ satisfies \eqref{char-L-reg}, then it is anti-dominant, i.e. $\langle \lambda,\alpha^\vee \rangle \leq 0$ for all $\alpha\in \Phi^+$. Indeed, we may write $\alpha$ as linear combination with positive coefficients of simple roots. Since $\langle \lambda,\alpha^\vee\rangle=0$ for $\alpha\in I$, we deduce the claim. In particular, when $\lambda \in X^*(L)$ satisfies \eqref{char-L-reg}, all terms in the sum \eqref{eq-sum} are non-positive, and the one corresponding to $i=0$ is negative. Thus we see that Proposition \ref{prop-KosWed} is compatible with Corollary \ref{cor-line-H0}.

We now generalize the construction of $\mu$-ordinary Hasse invariants from \cite{Koskivirta-Wedhorn-Hasse}. Unfortunately, we cannot use the statement of Theorem \ref{thm-main-H0} as is. We need to return to its proof, following \cite[Theorem 3.4.1]{Imai-Koskivirta-vector-bundles}, in order to state a more precise result. First, we recall the following: For any algebraic $P$-representation $P\to \GL(V)$, the space of global sections of $\Vcal(\rho)$ over the $\mu$-ordinary locus $\Ucal_\mu$ can be identified as
\begin{equation}\label{H0-mu-eq}
    H^0(\Ucal_\mu,\Vcal(\rho)) = V^{L_\varphi}.
\end{equation}
Furthermore, via the identification given by Theorem \ref{thm-main-H0}, the natural injective map $H^0(\GZip^\mu,\Vcal(\rho)) \to H^0(\Ucal_\mu,\Vcal(\rho))$ corresponds to the natural inclusion into $V^{L_\varphi}$.

To simplify, we consider the case of a line bundle $\Lcal(\lambda)$ (for $\lambda\in X^*(L)$). If we write $N_\mu$ for the order of the finite group-scheme $L_\varphi$, then for any $\lambda\in X^*(L)$, the character $N_\mu \lambda$ vanishes on $L_\varphi$. We deduce that for any $\lambda\in X^*(L)$, the space $H^0(\Ucal_\mu,\Lcal(\lambda)^{\otimes N_\mu})$ is one-dimensional. We may view an element $h\in H^0(\Ucal_\mu,\Lcal(\lambda)^{\otimes N_\mu})$ as a regular map $h\colon U_\mu\to \AA^1$ satisfying
\begin{equation*}
    h(xgy^{-1}) = \lambda(x)^{N_\mu}f(g), \quad (x,y)\in E, g\in U_\mu.
\end{equation*}
Since $G_k$ is a normal $k$-variety, we may speak of the divisor $\div(h)$ of $h$. By $E$-equivariance of $h$, the divisor $\div(h)$ can be written as
\begin{equation*}
    \div(h)=\sum_{\alpha\in \Delta^P} \mult_\alpha(h) \overline{Z}_{\alpha}
\end{equation*}
for certain multiplicities $\mult_{\alpha}(h)$, where $\{Z_\alpha\}_{\alpha\in \Delta^P}$ are the codimension one $E$-orbits in $G_k$ (section \ref{subsec-zipstrata}). Define the sign function $\sgn\colon \RR\to \{-1,1,0\}$ by
\begin{equation}
\sgn(x)=\begin{cases}
    1 & \textrm{if} \ x>0 \\
    -1 & \textrm{if} \ x<0 \\
        0 & \textrm{if} \ x=0.
\end{cases}
\end{equation}
The proof of \cite[Theorem 3.4.1]{Imai-Koskivirta-vector-bundles} shows the following: 
\begin{lemma}\label{sgn-prop} One has $\textup{\sgn}(\mult_{\alpha}(h)) = \textup{\sgn}(\langle \lambda,\delta_\alpha \rangle)$ for all $\alpha\in \Delta^P$.
\end{lemma}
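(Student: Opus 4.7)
The plan is to reduce the sign statement to the cone equality given by Proposition \ref{prop-line}, combined with Proposition \ref{prop-lin-indep}, and then to identify the facets of the resulting simplicial cone with the individual divisors $\overline{Z}_\alpha$ via a local transversality argument.

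The first step is to check that the assignment $\lambda \mapsto \mult_\alpha(h_\lambda)$ is $\QQ$-linear in $\lambda$. For each $\lambda \in X^*(L)$, let $h_\lambda$ denote the (unique up to scalar) nonzero element of $H^0(\Ucal_\mu, \Lcal(\lambda)^{\otimes N_\mu})$, viewed as a rational section of $\Lcal(\lambda)^{\otimes N_\mu}$ on $\GZip^\mu$. Since $H^0(\Ucal_\mu, \Lcal(\chi)^{\otimes N_\mu})$ is one-dimensional for every $\chi$, the nonzero section $h_\lambda \otimes h_{\lambda'}$ must be a scalar multiple of $h_{\lambda + \lambda'}$, and additivity of divisors under tensor products yields $\mult_\alpha(h_{\lambda+\lambda'}) = \mult_\alpha(h_\lambda) + \mult_\alpha(h_{\lambda'})$. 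Hence $V_\alpha \colon \lambda \mapsto \tfrac{1}{N_\mu}\mult_\alpha(h_\lambda)$ extends to a $\QQ$-linear form on $X^*(L)_\QQ$. For $\lambda \in X^*(G)$, the exact sequence \eqref{Pic-zip} shows that $\Lcal(\lambda)$ is trivial on $\GZip^\mu$, so $h_\lambda$ is a nowhere-vanishing global section and $V_\alpha(\lambda) = 0$. Thus $V_\alpha$ descends to a linear form on $X^*(L)_\QQ / X^*(G)_\QQ$.

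Next, Proposition \ref{prop-line} says precisely that $h_\lambda$ has no poles on $\GZip^\mu$, i.e.\ $V_\alpha(\lambda) \geq 0$ for every $\alpha$, if and only if $\langle \lambda, \delta_\alpha\rangle \geq 0$ for every $\alpha \in \Delta^P$. The two cones
\begin{equation*}
    C_1 = \{\lambda \mid V_\alpha(\lambda) \geq 0 \text{ for all } \alpha\} \quad\text{and}\quad C_2 = \{\lambda \mid \langle \lambda,\delta_\alpha\rangle \geq 0 \text{ for all } \alpha\}
\end{equation*}
therefore coincide inside $X^*(L)_\QQ/X^*(G)_\QQ$. By Proposition \ref{prop-lin-indep}, $C_2$ is a simplicial cone whose $|\Delta^P|$ facets are indexed by $\Delta^P$; so must be $C_1$, hence the $V_\alpha$ are linearly independent, and there exist a bijection $\sigma \colon \Delta^P \to \Delta^P$ and positive rationals $c_\alpha > 0$ with $V_\alpha = c_\alpha\langle \cdot, \delta_{\sigma(\alpha)}\rangle$. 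This already yields the sign equality up to the relabelling $\sigma$.

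The main obstacle is then to show $\sigma = \mathrm{id}$, that is, to match the facet $\{V_\alpha = 0\}$ (sections not vanishing along $\overline{Z}_\alpha$) with $\{\langle \cdot, \delta_\alpha\rangle = 0\}$. For this I would carry out a local computation on a transverse slice to $\overline{Z}_\alpha$: pick $n \geq 1$ so that $\eta \colonequals n\delta_\alpha$ lies in $X_*(T)$, use the relation $\wp(\eta) = n\alpha^\vee$ to build a curve $\gamma \colon \AA^1 \to G_k$ that meets $\overline{Z}_\alpha$ transversally at $c = 0$ while $\gamma(c) \in U_\mu$ for $c \neq 0$, and invoke the $E$-equivariance \eqref{Gzip-glob-eq}. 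The outcome should read
\begin{equation*}
    h(\gamma(c)) = c^{n N_\mu \langle \lambda,\delta_\alpha\rangle}\cdot u(c)
\end{equation*}
for some unit $u(c)$, which pins down $\sigma(\alpha) = \alpha$ and produces the sign equality directly. This is essentially the local analysis performed in the proof of \cite[Theorem 3.4.1]{Imai-Koskivirta-vector-bundles}; alternatively, one may invoke the strata Hasse invariants of \cite{Goldring-Koskivirta-Strata-Hasse}, whose weights and vanishing strata pair naturally to force $\sigma = \mathrm{id}$.
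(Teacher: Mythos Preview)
Your proposal is correct, but it is circuitous compared to the paper's argument, and the part where the real work happens is precisely the part you defer.

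The paper's proof is \emph{only} your step~4. It constructs, for each $\alpha\in\Delta^P$, an explicit ``adapted morphism'' $\psi_\alpha\colon E\times\AA^1\to G_k$, $((x,y),t)\mapsto x\,\phi_\alpha(A(t))\,y^{-1}$, where $\phi_\alpha\colon\SL_{2,k}\to G_k$ is the standard embedding associated to $\alpha$ and $A(t)=\left(\begin{smallmatrix}t&1\\-1&0\end{smallmatrix}\right)$. This map sends $t=0$ into $Z_\alpha$ and $t\neq 0$ into $U_\mu$, and by the theory of adapted morphisms the sign of $\mult_\alpha(h)$ equals the sign of the $t$-adic valuation of $h\circ\psi_\alpha$. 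The matrix decomposition $A(t)=\left(\begin{smallmatrix}1&0\\-t^{-1}&1\end{smallmatrix}\right)\left(\begin{smallmatrix}t&0\\0&t^{-1}\end{smallmatrix}\right)\left(\begin{smallmatrix}1&t^{-1}\\0&1\end{smallmatrix}\right)$, together with $E$-equivariance and the identity $\alpha^\vee=\delta_\alpha-p\sigma(\delta_\alpha)$, yields $h(\psi_\alpha((x,y),t))=\lambda(x)\,t^{\langle\lambda,\delta_\alpha\rangle}$ (up to the power $N_\mu$), which gives the sign equality immediately.

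Your cone argument (steps 1--3) is a pleasant conceptual repackaging: it explains, via Propositions~\ref{prop-line} and~\ref{prop-lin-indep}, why the linear forms $V_\alpha$ and $\langle\cdot,\delta_\alpha\rangle$ must agree up to a permutation and positive scalars. But once you reach step~4 you still need a transverse curve hitting $\overline{Z}_\alpha$, and your own description (``use $\wp(\eta)=n\alpha^\vee$ to build a curve'') is not yet such a construction---the curve $c\mapsto\alpha^\vee(c)^n$ stays in $T\subset U_\mu$ and does not extend to $c=0$. The missing ingredient is exactly the $\phi_\alpha(A(t))$ trick above, which you correctly attribute to \cite[Theorem~3.4.1]{Imai-Koskivirta-vector-bundles}. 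Since that computation already proves the lemma outright, your steps 1--3, while correct, do not reduce the problem; they are a detour that returns to the paper's starting point.
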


To prove Lemma \ref{sgn-prop}, we review a construction explained in \cite[\S 3.1]{Imai-Koskivirta-vector-bundles}. For $\alpha \in \Phi$, denote by $\phi_\alpha  \colon  \SL_{2,k} \to G_k$ the unique homomorphism such that 
\[
 \phi_\alpha 
 \left( \begin{pmatrix}
 1 & x \\ 0 & 1 
 \end{pmatrix}\right) = u_{\alpha}(x), \quad 
 \phi_\alpha 
 \left( \begin{pmatrix}
 1 & 0 \\ x & 1 
 \end{pmatrix}\right) = u_{-\alpha}(x) 
\]
afforded by \cite[9.2.2]{Springer-Linear-Algebraic-Groups-book}. It also satisfies $\phi_{\alpha}(\diag(t,t^{-1}))=\alpha^\vee(t)$. For $\alpha\in \Delta^P$, recall that $Z_\alpha=E\cdot s_\alpha$ is the $E$-orbit of $s_\alpha$. For any $\alpha\in \Delta^P$, define a subset $X_\alpha$ as follows:
\begin{equation}
X_\alpha  \colonequals  G_k \setminus \bigcup_{\beta\in \Delta^P,\ \beta\neq \alpha}Z_\beta.
\end{equation}
It is clear that $X_\alpha$ is open and contains the open $E$-orbit $U_\mu$. Set $Y=E \times \AA^1$ and define the map
\begin{equation}\label{phia}
\psi_\alpha \colon Y \to G_k ; \ ((x,y),t)\mapsto x \phi_{\alpha} \left(A(t)\right)y^{-1} \quad \textrm{where} \ A(t)=\left(
\begin{matrix}
t & 1 \\ -1 & 0 \end{matrix}
\right)\in \SL_{2,k}.
\end{equation}
One has $\phi_{\alpha} (A(0))=s_\alpha$ in $W$. The properties of the map $\psi_\alpha$ are given by the following proposition

\begin{proposition}[{\cite[Proposition 3.1.4]{Imai-Koskivirta-vector-bundles}}]\label{psiadapted} \ 
\begin{assertionlist}
\item \label{item-imagepsi} The image of $\psi_\alpha$ is contained in $X_\alpha$.
\item \label{item-psit} For any $(x,y)\in E$ and $t\in \AA^1$, one has $\psi_\alpha((x,y),t)\in U_{\mu} \Longleftrightarrow t\neq 0$.
\item \label{item-zero} For all $(x,y)\in E$, we have $\psi_\alpha((x,y),0)\in E\cdot s_\alpha$.
\end{assertionlist}
\end{proposition}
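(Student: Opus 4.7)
The proof addresses the three claims in turn, with (3) and the $\Leftarrow$ direction of (2) being the substantive parts. For (3), observe that $A(0) = \left(\begin{smallmatrix} 0 & 1 \\ -1 & 0 \end{smallmatrix}\right)$ represents the nontrivial Weyl element of $\SL_{2,k}$, so $\phi_\alpha(A(0))$ is a representative of $s_\alpha$ in $N_{G_k}(T)$. Consequently $\psi_\alpha((x,y),0) = x \phi_\alpha(A(0)) y^{-1} \in E \cdot s_\alpha = Z_\alpha$, giving (3) directly.

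The heart of the proof is the $\Leftarrow$ direction of (2), namely that $\phi_\alpha(A(t)) \in U_\mu$ when $t \neq 0$. By $E$-equivariance of $U_\mu$, it suffices to exhibit $(x, y) \in E$ with $x y^{-1} = \phi_\alpha(A(t))$. Writing any such pair as $x = l u$, $y = \varphi(l) w$ with $l \in L$, $u \in R_{\mathrm{u}}(P)$, $w \in R_{\mathrm{u}}(Q)$, we have $x y^{-1} = l \cdot u w^{-1} \cdot \varphi(l)^{-1}$, so the plan is to find $l \in L$ with
\begin{equation*}
l^{-1} \phi_\alpha(A(t)) \varphi(l) \in R_{\mathrm{u}}(P) \cdot R_{\mathrm{u}}(Q)^{-1}.
\end{equation*}
For $t \neq 0$, the LDU-decomposition of $A(t)$ in $\SL_{2,k}$ gives $\phi_\alpha(A(t)) = u_{-\alpha}(-1/t) \alpha^\vee(t) u_\alpha(1/t)$, so the natural ansatz is $l = \alpha^\vee(s)$ for a suitable $s \in k^\times$. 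Using \eqref{eq:phiconj} together with the identity $\varphi(\alpha^\vee(s)) = \alpha^\vee(s^p)$ (valid since $T$ is $\FF_p$-rational), a direct calculation gives
\begin{equation*}
\alpha^\vee(s)^{-1} \phi_\alpha(A(t)) \alpha^\vee(s^p) = u_{-\alpha}(-s^2/t) \cdot \alpha^\vee(t s^{p-1}) \cdot u_\alpha(s^{-2p}/t),
\end{equation*}
whose central $T$-factor becomes trivial upon choosing $s$ so that $s^{p-1} = 1/t$, which is possible since $k$ is algebraically closed.

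For the two remaining factors to actually land in $R_{\mathrm{u}}(P)$ and $R_{\mathrm{u}}(Q)^{-1}$, one still needs $U_\alpha \subset R_{\mathrm{u}}(Q)$. Unwinding the definition $Q = P_+(\mu)^{(p)}$, this is equivalent to $\sigma^{-1}(\alpha) \notin I$, that is, $m_\alpha = 1$ in the notation of \eqref{malpha-equ}; the case $m_\alpha = 1$ is thereby handled. When $m_\alpha > 1$ the root $\alpha$ lies in the root system of $M$ and $U_\alpha$ sits inside $M$ rather than in $R_{\mathrm{u}}(Q)$, so the naive ansatz fails; this is the main technical obstacle. The remedy is to enlarge the ansatz by taking $l$ as a product of cocharacters indexed by the $\sigma$-orbit $\{\sigma^{-i}(\alpha)\}_{0 \leq i < m_\alpha}$ and solving iteratively for the corresponding parameters, which is precisely what gives rise to the integer $m_\alpha$ and the exponents $r_{\alpha,i}$ appearing in Theorem \ref{thm-main-H0}.

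Assertion (1) and the $\Rightarrow$ direction of (2) then follow formally. Once (2) and (3) are established, the image of $\psi_\alpha$ lies in $U_\mu \cup Z_\alpha$, which is disjoint from $Z_\beta$ for every $\beta \in \Delta^P \setminus \{\alpha\}$, giving (1). Finally, if $t = 0$ then $\psi_\alpha((x,y),0) \in Z_\alpha$ by (3), and $Z_\alpha \cap U_\mu = \emptyset$, yielding the $\Rightarrow$ direction of (2).
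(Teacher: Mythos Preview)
The paper does not actually prove this proposition; it is quoted verbatim from \cite[Proposition 3.1.4]{Imai-Koskivirta-vector-bundles}, so there is no in-paper argument to compare with. Your overall architecture is sound: item (3) is immediate from $\phi_\alpha(A(0))=\dot{s}_\alpha$, and (1) together with the $\Rightarrow$ direction of (2) follow formally once (3) and the $\Leftarrow$ direction of (2) are known, since the image then sits inside the union of two $E$-orbits $U_\mu\cup Z_\alpha$.

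There is, however, a genuine mistake in your treatment of (2)$\Leftarrow$. The identity $\varphi(\alpha^\vee(s))=\alpha^\vee(s^p)$ is \emph{not} a consequence of $T$ being defined over $\FF_p$; what $\FF_p$-rationality of $T$ gives is $\varphi(\delta(s))=\sigma(\delta)(s^p)$ for $\delta\in X_*(T)$, so $\varphi(\alpha^\vee(s))=\sigma(\alpha)^\vee(s^p)$. Since $\sigma(\alpha)$ need not equal $\alpha$ (even when $m_\alpha=1$), your displayed computation and the ensuing choice $s^{p-1}=t^{-1}$ do not go through. A cleaner way to handle the torus factor, which sidesteps this issue entirely, is to invoke Lang's theorem for the $\FF_p$-torus $T$: choose $l\in T(k)$ with $l\varphi(l)^{-1}=\alpha^\vee(t)$, and set $x=u_{-\alpha}(-t^{-1})\,l$, $y=u_\alpha(-t^{-1})\,\varphi(l)$. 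When $m_\alpha=1$ one has $u_{-\alpha}\in R_{\mathrm u}(P)$ and $u_\alpha\in R_{\mathrm u}(Q)$, so $(x,y)\in E$ and $xy^{-1}=\phi_\alpha(A(t))$.

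For $m_\alpha>1$ you correctly identify the obstruction ($U_\alpha\subset M$ rather than $R_{\mathrm u}(Q)$), but what you write is only a heuristic. The actual argument does require enlarging $l$ beyond $T$, using the root subgroups $U_{\sigma^{-i}(\alpha)}\subset L$ for $1\le i\le m_\alpha-1$ to iteratively absorb the unwanted $M$-contributions under the constraint $\varphi(\theta^P_L(x))=\theta^Q_M(y)$; this is where the tuple $\Xi_\alpha$ and the exponents $r_{\alpha,i}$ enter. As written, this case is not proved.
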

Since $h\colon U_\mu \to \AA^1$ is an $E$-equivariant function, we can apply the theory of "adapted morphisms" established in \cite[\S 3.2]{Koskivirta-automforms-GZip}. We deduce that the sign of $\mult_\alpha(h)$ coincides with the sign of the multiplicity of the divisor of the composition $h\circ \psi_\alpha$ (\loccitn, Corollary 3.2.3). Note that $A(t)$ can be decomposed as follows:
\begin{equation}\label{At}
A(t)=\left(\begin{matrix}
1&0\\-t^{-1}&1
\end{matrix} \right) \left(\begin{matrix}
t&0\\0&t^{-1}
\end{matrix} \right)\left(\begin{matrix}
1&t^{-1}\\0&1
\end{matrix} \right).
\end{equation}
Since $\alpha\in \Delta^P$, the image by $\phi_\alpha$ of the matrices $\left(\begin{matrix}
1&0\\-t^{-1}&1
\end{matrix} \right)$ and $\left(\begin{matrix}
1&t^{-1}\\0&1
\end{matrix} \right)$ lie in $R_{\mathrm{u}}(P)$ and $R_{\mathrm{u}}(Q)$ respectively. We deduce
\begin{equation}
   h(\psi_\alpha((x,y),t)) = \lambda(x) h \left( \phi_{\alpha}\left( \left(\begin{matrix}
t&0\\0&t^{-1}
\end{matrix} \right) \right) \right) =  \lambda(x) h(\alpha^\vee(t)). 
\end{equation}
Since $\alpha^\vee =\delta_\alpha-p\sigma(\delta_\alpha)$, we obtain $h(\alpha^\vee(t))=\lambda(\delta_\alpha(t))=t^{\langle \lambda,\delta_\alpha \rangle}$. Hence, the sign of the divisor of $h\circ \psi_\alpha$ coincides with $\langle \lambda,\delta_\alpha \rangle$. This finishes the proof of Lemma \ref{sgn-prop}. We deduce from this discussion the following theorem, which generalizes the existence of $\mu$-ordinary Hasse invariants of \cite{Koskivirta-Wedhorn-Hasse} and completes Proposition \ref{prop-line} above.
\begin{theorem}\label{thm-mu}
For $\lambda\in X^*(L)$, the following are equivalent.
\begin{equivlist}
    \item There exists a $\mu$-ordinary Hasse invariant $h\in H^0(\GZip^\mu,\Lcal(\lambda)^{\otimes N_\lambda})$.
    \item $\langle \lambda, \delta_\alpha \rangle > 0$ for all $\alpha\in \Delta^P$.
\end{equivlist}
\end{theorem}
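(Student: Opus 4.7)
The argument combines Lemma \ref{sgn-prop} with Corollary \ref{cor-line-H0}; the substantive content is already in place, and what remains is to match the positivity of $\langle \lambda,\delta_\alpha\rangle$ with the Hasse-invariant condition. The key preliminary observation is that $G_k \setminus U_\mu = \bigcup_{\alpha \in \Delta^P} \overline{Z}_\alpha$, so the divisorial components of the boundary of the $\mu$-ordinary stratum are precisely the $\overline{Z}_\alpha$. For a nonzero section $h$ of a line bundle on the smooth variety $G_k$, the vanishing locus is pure of codimension one and agrees with the support of the effective divisor $\div(h)=\sum_{\alpha\in \Delta^P}\mult_\alpha(h)\overline{Z}_\alpha$. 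Hence a nonzero section $h\in H^0(\GZip^\mu,\Lcal(\lambda)^{\otimes N})$ is a $\mu$-ordinary Hasse invariant if and only if $\mult_\alpha(h)>0$ for every $\alpha\in \Delta^P$.

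For $(\mathrm{i})\Rightarrow(\mathrm{ii})$, I would take a $\mu$-ordinary Hasse invariant $h\in H^0(\GZip^\mu,\Lcal(\lambda)^{\otimes N_\lambda})$; by the opening remark $\mult_\alpha(h)>0$ for every $\alpha\in \Delta^P$. Restricting to $\Ucal_\mu$ and applying Lemma \ref{sgn-prop} with $N_\lambda\lambda$ in place of $\lambda$ yields $\sgn(\langle N_\lambda\lambda,\delta_\alpha\rangle)=1$, whence $\langle\lambda,\delta_\alpha\rangle>0$ since $N_\lambda>0$. For $(\mathrm{ii})\Rightarrow(\mathrm{i})$, assume $\langle\lambda,\delta_\alpha\rangle>0$ for every $\alpha\in \Delta^P$; these pairings are in particular nonnegative, so Corollary \ref{cor-line-H0} produces some $N\geq 1$ and a nonzero section $h\in H^0(\GZip^\mu,\Lcal(\lambda)^{\otimes N})$. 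Its restriction to $\Ucal_\mu$ is nonzero because $\Ucal_\mu$ is open and dense in $\GZip^\mu$, and Lemma \ref{sgn-prop} gives $\sgn(\mult_\alpha(h))=\sgn(\langle N\lambda,\delta_\alpha\rangle)=1$ for every $\alpha\in \Delta^P$. By the opening remark $h$ is then a $\mu$-ordinary Hasse invariant.

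There is essentially no technical obstacle: both implications reduce at once to the two earlier results together with the codimension-one bookkeeping. The only minor point of care is that Lemma \ref{sgn-prop} was stated at the specific power $N_\mu$, whereas I invoke it at various positive multiples of $\lambda$; this is harmless, since the proof of the lemma only uses that the character is trivial on $L_\varphi$ and positive scaling preserves signs, so the result remains valid after replacing $\lambda$ by any sufficiently divisible positive multiple.
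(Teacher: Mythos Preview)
Your proposal is correct and follows essentially the same approach as the paper: the theorem is deduced directly from Lemma \ref{sgn-prop} (linking $\sgn(\mult_\alpha(h))$ to $\sgn(\langle\lambda,\delta_\alpha\rangle)$) together with Corollary \ref{cor-line-H0} (producing a nonzero section once the pairings are nonnegative). You have simply spelled out the divisor bookkeeping and the passage between different positive multiples of $\lambda$ more explicitly than the paper, which merely says ``we deduce from this discussion the following theorem.''
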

When $P$ is defined over $\FF_p$, one has $\delta_\alpha = - \frac{\alpha^\vee}{p-1}$, hence the content of Theorem \ref{thm-mu} coincides with Proposition \ref{prop-KosWed}. However, for more general situations, for example the case of a Weil restriction, the condition (ii) of Theorem \ref{thm-mu} is less strenuous than that of Proposition \ref{prop-KosWed}.

\section{\texorpdfstring{The Cox ring of $\GZip^\mu$}{}}

\subsection{Mori dream spaces}

We briefly review the theory of Mori dream spaces. The natural setting to define these objects is that of projective varieties. In the context of this paper, we will consider the stacks of $G$-zips and $G$-zip flags (the latter is defined in section \ref{sec-zip-flag}). Although the classical theory of Mori dream spaces does not apply in this generality, we will see that all objects can be naturally defined for the stacks that we consider. Mori dream spaces were first defined by Hu and Keel in \cite{Hu-Keel-Mori-DS}. To simplify, we assume below that $X$ a smooth projective variety over an algebraically closed field $k$. Then $X$ is said to be a Mori dream space if it satisfies the following two conditions:

\begin{definitionlist}
    \item The Picard group $\Pic(X)$ is a finitely generated abelian group. 
    \item There are finitely many birational maps $f_i \colon X\to X_i$ (with $X_i$ projective) defined on some open subset of $X$, which are isomorphisms in codimension one, and satisfy that for any moveable divisor $D$ on $X$, there exists $i$ and a semiample divisor $D_i$ on $X_i$ such that $D=f_i^*(D_i)$.
    \end{definitionlist}
Let $X$ be a projective, smooth $k$-variety, such that $\Pic(X)$ is finitely generated. Fix a system of Weyl divisors $\underline{D}=(D_1,\dots, D_m)$ such that the attached line bundles $(\Ocal(D_1),\dots, \Ocal(D_m))$ form a basis of $\Pic(X)_\QQ$. For $\lambda=(\lambda_1,\dots, \lambda_m)\in \ZZ^m$ set $\lambda \cdot \underline{D}\colonequals \sum_{i=1}^m \lambda_i D_i$. The Cox ring of $X$ is defined as follows:

\begin{equation}
    \Cox(X)\colonequals \bigoplus_{\lambda\in \ZZ^m} H^0(X,\Ocal(\lambda \cdot \underline{D})).
\end{equation}
This ring depends on the choice of $\underline{D}$, but most of its properties are independent of this choice. The cone of effective divisor classes is the grading set of this graded algebra, namely it is given by:
\begin{equation}\label{effX}
    \Eff(X)=\left\{ \lambda\in \ZZ^m \ | \ H^0(X,\Ocal(\lambda \cdot \underline{D}))\neq 0 \right\}.
\end{equation}

\begin{theorem}[{\cite[Proposition 2.9]{Hu-Keel-Mori-DS}}]\label{thm-MDS-cox}
Let $X$ be a projective, smooth $k$-variety, such that $\Pic(X)$ is finitely generated. The following are equivalent:
\begin{equivlist}
    \item $X$ is a Mori dream space.
    \item The Cox ring $\Cox(X)$ is finitely generated over $k$.
\end{equivlist}
\end{theorem}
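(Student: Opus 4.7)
The plan is to prove the two implications via different strategies. For (i) $\Rightarrow$ (ii), the main idea is that the Cox ring is insensitive to small birational modifications. First I would observe that if $f \colon X \dashrightarrow X'$ is a birational map that is an isomorphism in codimension one, then pulling back Weil divisors produces a natural identification $\Cox(X) \simeq \Cox(X')$, because global sections of the reflexive sheaves attached to divisor classes are determined on the complement of any codimension-two subset. Using the Mori dream hypothesis, the effective cone of $X$ decomposes into finitely many closed chambers, each of the form $f_i^*(\operatorname{Nef}(X_i))$ for the finitely many small modifications $f_i \colon X \dashrightarrow X_i$ given by the definition. On each $X_i$, nef classes are in fact semiample (this is the content of the moveable-divisor condition in the Mori dream definition), so the section ring associated to the nef cone of $X_i$ is finitely generated by the standard finite-generation result for semiample section rings (proved via the basepoint-free theorem or via ampleness on a suitable contraction). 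Assembling these finitely many finitely generated subrings along the chamber decomposition gives finite generation of $\Cox(X)$.

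For the converse (ii) $\Rightarrow$ (i), I would use variation of GIT. Set $Y \colonequals \spec(\Cox(X))$; the $\Pic(X)$-grading corresponds to an action of the Néron--Severi torus $T \colonequals \Hom(\Pic(X), \GG_{\mathrm{m}})$ on $Y$. For any ample class $\alpha \in \Pic(X)$, a direct check identifies the GIT quotient $Y /\!/_{\alpha} T$ with $X$ itself, using that $H^0(X,\Ocal(n\alpha))$ is the $n\alpha$-graded piece of $\Cox(X)$. Letting $\alpha$ vary over the effective cone, the Thaddeus--Dolgachev--Hu theory of variation of GIT produces a chamber decomposition of $\Eff(X)$: each top-dimensional chamber $C_i$ yields a projective model $X_i \colonequals Y /\!/_{\alpha_i} T$ and a birational map $f_i \colon X \dashrightarrow X_i$ which is an isomorphism in codimension one (because the loci of strictly semistable points across adjacent chambers have codimension at least two in $Y$). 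Any moveable class $D$ lies in some chamber $C_i$, and by construction $D$ is pulled back from a semiample class on $X_i$, giving property (b) of the Mori dream definition.

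The main obstacle is the GIT direction (ii) $\Rightarrow$ (i): one must verify that the quotients $Y /\!/_{\alpha} T$ really are projective and $\mathbb{Q}$-factorial, and that the birational maps $f_i$ satisfy the codimension-one isomorphism property. This requires structural information about $Y$ (integrality and normality, which descend from those of $X$, plus a codimension estimate on the unstable loci) that is encoded in finite generation plus the reflexivity of the graded pieces. The (i) $\Rightarrow$ (ii) direction is technically easier, but still needs the careful gluing argument along chambers; the key input to keep track of is that moveable divisors remain semiample after a small modification, so that no section is lost when passing between $\Cox(X)$ and the section rings of the various $X_i$.
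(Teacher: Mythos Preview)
The paper does not prove this theorem; it is stated as a citation to \cite[Proposition 2.9]{Hu-Keel-Mori-DS} and used as a black box. There is therefore no ``paper's own proof'' to compare against.

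That said, your sketch is essentially the original Hu--Keel argument, so in that sense it matches the intended reference. The (ii) $\Rightarrow$ (i) direction via VGIT on $Y=\spec(\Cox(X))$ with the N\'eron--Severi torus action is exactly how Hu--Keel proceed, and you correctly identify the key inputs: the chamber decomposition from Thaddeus/Dolgachev--Hu, the identification of the ample-chamber quotient with $X$, and the codimension control on unstable loci. One point to be careful with: you need the GIT quotients $X_i$ to be $\QQ$-factorial, which in Hu--Keel follows from the Cox-ring description (the graded pieces give all divisorial sheaves), not merely from normality of $Y$. For (i) $\Rightarrow$ (ii), your chamber-gluing argument is the right idea but is a bit loose as stated: the Cox ring is not literally a union of the nef-cone section rings of the $X_i$, rather one shows that a finite set of generators can be extracted because each effective class lies in some $f_i^*\overline{\mathrm{Nef}}(X_i)$ and the corresponding section ring on $X_i$ is finitely generated (using semiampleness). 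The ``gluing'' is really the observation that finitely many such finitely generated subrings, together covering all graded pieces, force finite generation of the whole. Your outline captures this, but the phrase ``assembling these subrings along the chamber decomposition'' hides the actual work.
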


For example, smooth Fano varieties and spherical varieties are examples of Mori dream spaces. In particular, if $G$ is a connected reductive group over $k$ and $R\subset G$ is a parabolic subgroup, then $G/R$ is a Mori dream space. Let $(B,T)$ be a Borel pair such that $B\subset P$, as in section \ref{sec-gp-not}. We will consider the Cox ring of $P/B$ and $G/B$. Recall that $I\subset \Delta$ denotes the type of $P$. Consider the induced $P$-representation $V_I(\lambda)=\ind_B^P(\lambda)$ and define:
\begin{equation}\label{coxI-def-eq}
    \Cox_I(G)\colonequals \bigoplus_{\lambda\in X^*(T)} V_I(\lambda).
\end{equation}
Since we can also write $V_I(\lambda)=H^0(P/B,\Lcal(\lambda))$, the ring $\Cox_I(G)$ can be interpreted as the Cox ring of $P/B$. In particular, $\Cox_I(G)$ is a finitely generated $k$-algebra. Taking $P=G$, we also have the ring $\Cox_{\Delta}(G)$, which is the Cox ring of $G/B$.

\subsection{Graded rings and monoids}

Before we discuss the Cox ring for the stack of $G$-zips, we make some general observations on graded rings. Let $\Gamma$ be a finitely generated free abelian group, and let $X\subset \Gamma$ be a submonoid. We say that $X$ is finitely generated if there exists $x_1,\dots, x_k\in X$ such that $X=\ZZ_{\geq 0} x_1 + \dots + \ZZ_{\geq 0} x_k$. Write $X_{\QQ_{\geq 0}}$ for the set of linear combinations with non-negative rational coefficients of elements of $X$ in $\Gamma_\QQ$. We call $X_{\QQ_{\geq 0}}$ a $\QQ_{\geq 0}$-monoid. Say that $X_{\QQ_{\geq 0}}$ is finitely generated if there exists $x_1,\dots, x_k\in X_{\QQ_{\geq 0}}$ such that $X_{\QQ_{\geq 0}}=\QQ_{\geq 0} x_1 + \dots + \QQ_{\geq 0} x_k$ (in this case, one can assume that $x_1,\dots, x_k\in X$). Next, suppose
\begin{equation}
R=\bigoplus_{\lambda \in \Gamma} R_{\lambda}    
\end{equation}
is a graded $k$-algebra, where $R_{\lambda}$ is a $k$-vector space. Let $\Eff(R)$ denote the support of $R$, namely the set
\begin{equation}
    \Eff(R)\colonequals \{\lambda\in \Gamma \ | \ R_\lambda\neq 0\}.
\end{equation}
The notation is inspired from the geometric setting, where $\Eff(R)$ is the set of effective classes of divisors, as in \ref{effX}. We make the following assumptions:
\begin{bulletlist}
    \item $R$ is an integral domain.
\item $\Frac(R)$ is finitely generated over $k$.
\item $R$ is integrally closed
\item For all $\lambda\in \Gamma$, the $k$-vector space $R_\lambda$ has dimension $\leq 1$. 
\end{bulletlist}
We sometimes call the support $\Eff(R)$ the grading monoid of $R$. Indeed, when $R$ is an integral domain, one sees immediately that $\Eff(R)$ is a submonoid of $\Gamma$.

\begin{lemma}\label{lem-equiv-fg}
Under the above assumptions, the following assertions are equivalent:
\begin{equivlist}
\item $R$ is a finitely generated $k$-algebra.
\item $\Eff(R)$ is a finitely generated monoid.
\item $\Eff(R)_{\QQ_{\geq 0}}$ is a finitely generated $\QQ_{\geq 0}$-monoid. 
\end{equivlist}
\end{lemma}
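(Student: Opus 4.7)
The plan is to prove the chain of implications (i) $\Rightarrow$ (ii) $\Rightarrow$ (iii) $\Rightarrow$ (i), with the last step being the substantial one.

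For (i) $\Rightarrow$ (ii), I would invoke the standard fact that a finitely generated graded algebra admits finitely many homogeneous generators $f_1,\dots,f_n$ (replace any set of generators by their homogeneous components). Then any nonzero homogeneous element of degree $\lambda \in \Eff(R)$ is a $k$-linear combination of monomials $\prod f_i^{a_i}$ whose total degree equals $\lambda$, so $\lambda = \sum a_i \deg(f_i)$ with $a_i \in \ZZ_{\geq 0}$. Hence $\Eff(R)$ is generated as a monoid by $\deg(f_1),\dots,\deg(f_n)$. The implication (ii) $\Rightarrow$ (iii) is immediate, since monoid generators of $\Eff(R)$ in $\Gamma$ also generate $\Eff(R)_{\QQ_{\geq 0}}$ as a $\QQ_{\geq 0}$-monoid.

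The key step is (iii) $\Rightarrow$ (i). Choose generators $x_1,\dots,x_k \in \Eff(R)$ of the $\QQ_{\geq 0}$-monoid $\Eff(R)_{\QQ_{\geq 0}}$ (one may take them in $\Eff(R)$ itself after clearing denominators), and pick a nonzero $f_i \in R_{x_i}$ for each $i$. Let $R' = k[f_1,\dots,f_k] \subset R$; this is a finitely generated $k$-algebra. The plan is then to show that $R$ is integral over $R'$ and apply the finiteness of integral closure in a finite field extension. For integrality, take any nonzero homogeneous $g \in R_\lambda$. Since $\lambda \in \Eff(R)_{\QQ_{\geq 0}}$, there exist $N \geq 1$ and $n_i \in \ZZ_{\geq 0}$ with $N\lambda = \sum n_i x_i$. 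Then both $g^N$ and $\prod f_i^{n_i}$ are nonzero elements of $R_{N\lambda}$, and since $\dim_k R_{N\lambda} \leq 1$, we get $g^N = c \prod f_i^{n_i}$ for some $c \in k$. Thus $g$ satisfies the monic equation $X^N - c\prod f_i^{n_i} = 0$ over $R'$. Since homogeneous elements span $R$, the full ring $R$ is integral over $R'$.

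To conclude, observe that $\Frac(R)/\Frac(R')$ is algebraic (by integrality) and finitely generated (since $\Frac(R)$ is finitely generated over $k$ by hypothesis, hence over $\Frac(R')$), therefore finite. By the finiteness theorem for the integral closure of a finitely generated $k$-algebra domain in a finite extension of its fraction field, the integral closure of $R'$ in $\Frac(R)$ is a finite $R'$-module. But any element of $\Frac(R)$ integral over $R'$ is integral over $R$, hence lies in $R$ because $R$ is integrally closed; so this integral closure equals $R$. Therefore $R$ is a finite $R'$-module, in particular a finitely generated $k$-algebra, proving (i).

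The main obstacle lies in (iii) $\Rightarrow$ (i): the grading monoid $\Eff(R)$ can fail to equal $\Eff(R)_{\QQ_{\geq 0}} \cap \Gamma$ (cf.\ the non-saturated subalgebra $k[x^2,y^2,xy]$ of $k[x,y]$), so one cannot apply Gordan's lemma naively to pass from finite generation of the cone to that of the monoid. Instead, the two hypotheses $\dim_k R_\lambda \leq 1$ and integral closedness of $R$ must both be used: the former to force homogeneous powers to be monomials in the $f_i$, yielding integrality of $R$ over $R'$, and the latter to identify $R$ with the integral closure of $R'$ in $\Frac(R)$.
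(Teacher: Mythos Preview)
Your proof is correct and follows essentially the same approach as the paper: both argue (i) $\Rightarrow$ (ii) $\Rightarrow$ (iii) trivially, then for (iii) $\Rightarrow$ (i) build a finitely generated subalgebra $R' = k[f_1,\dots,f_k]$ from nonzero elements in the generating degrees, use the hypothesis $\dim_k R_\lambda \leq 1$ to show $R$ is integral over $R'$, and conclude via finiteness of the integral closure in a finite field extension together with the integral closedness of $R$. The paper cites Bourbaki for the finiteness of the integral closure as a $k$-algebra, whereas you invoke the slightly stronger module-finiteness; either suffices.
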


\begin{proof}
Assume first that $R$ is finitely generated. Then we can write $R=k[f_1,\dots, f_n]$ for some homogeneous elements $f_i\in R_{\lambda_i}$ ($i=1,\dots ,n$). Since $R$ is an integral domain, one sees immediately that $\Eff(R)=\ZZ_{\geq 0}\lambda_1+\dots +\ZZ_{\geq 0}\lambda_N$, which proves (ii). The implication (ii) $\Rightarrow$ (iii) is obvious. Finally, assume that $\Eff(R)_{\QQ_{\geq 0}}=\QQ_{\geq 0} \lambda_1 + \dots + \QQ_{\geq 0} \lambda_N$ for some elements $\lambda_1,\dots,\lambda_N\in \Gamma$. After changing $\lambda_i$ to a positive multiple, we may assume $\lambda_i\in \Eff(R)$. By definition, there exists a nonzero element $f_i\in R_{\lambda_i}$. Denote by $X_0\subset \Eff(R)$ the monoid generated by $\lambda_1,\dots,\lambda_N$ and consider the subalgebra
\begin{equation}
    R'\colonequals k[f_1,\dots, f_N].
\end{equation}
If $f\in R_\lambda$ is nonzero for some $\lambda\in \Gamma$, then by definition $\lambda\in \Eff(R)$ and hence by assumption there exist integers $d\geq 1$ and $d_1, \dots, d_N\geq 0$ such that $d\lambda = d_1\lambda_1+\dots + d_N\lambda_N$. Set $f_0=\prod_{i=1}^N f_i^{d_i}$. Clearly $f_0$ lies in $R_{d\lambda}$. Since $R_{d\lambda}$ is one-dimensional by assumption, we have $f^d=a f_0$ for some $a\in k^*$. From this, we deduce that the ring extension $R'\subset R$ is integral. Write $K' \colonequals \Frac(R')$ and $K\colonequals \Frac(R)$ for the fields of fractions. Since $R$ is integral over $R'$, it is clear that $K$ is an algebraic extension of $K'$. Furthermore, since $K$ is finitely generated over $k$, we deduce that $K$ is a finite extension of $K'$. Recall the following result:

\begin{theorem}[{\cite[\S 3, \numero{2}, Th. 2]{bourbaki-alg-com-5-7}}]
Let $A$ be an integral domain which is a finitely generated $k$-algebra. Write $K=\Frac(A)$ and let $L/K$ be a finite field extension. Then, the integral closure of $A$ in $L$ is a finitely generated $k$-algebra.
\end{theorem}
By assumption, $R$ is integrally closed, so the integral closure of $R'$ in $K$ coincides with $R$. The above result then implies that $R$ is a finitely generated $k$-algebra. This terminates the proof.
\end{proof}

\subsection{\texorpdfstring{Cox ring of $G$-zips}{}}

We return to the setting of section \ref{glob-zip-ch2}. Let $(G,\mu)$ be a cocharacter datum over $\FF_p$, and write $\GZip^\mu$ for the attached stack of $G$-zips over $k$ (where $k$ is an algebraic closure of $\FF_p$). Although the stack $\GZip^\mu$ is not a $k$-variety (let alone a projective one), we may still define the Cox ring of $\GZip^\mu$ as follows. First, recall that the construction $\lambda\mapsto \Lcal(\lambda)$ of section \ref{sec-pic} induces a homomorphism $X^*(L)\to \Pic(\GZip^\mu)$ with finite cokernel (see \eqref{Pic-zip}). When $\Pic(G)=0$, this map is even surjective. We define the Cox ring of $\GZip^\mu$ as the $k$-algebra
\begin{equation*}
   \Cox(\GZip^\mu) \colonequals \bigoplus_{\lambda\in X^*(L)} H^0(\GZip^\mu, \Lcal(\lambda)).
\end{equation*}
The ring $\Cox(\GZip^\mu)$ is naturally a graded $k$-algebra, whose grading monoid is the set
\begin{equation}
    \Eff(\GZip^\mu) \colonequals \{\lambda\in X^*(L) \ | \ H^0(\GZip^\mu, \Lcal(\lambda))\neq 0\}.
\end{equation}
It can be interpreted geometrically as the set of classes of effective divisors (i.e. the set of $\lambda\in X^*(T)$ such that $\Lcal(\lambda)=\Ocal(D)$ for an effective divisor $D$ on $\GZip^\mu$). It is an additive submonoid of $X^*(L)$ containing $0$. By Proposition \ref{prop-line}, we have:
\begin{equation}
    \Eff(\GZip^\mu) = \{\lambda\in X^*(L) \ | \ \lambda \textrm{ is trivial on }L_\varphi \textrm{ and } \langle \lambda,\delta_\alpha\rangle \geq 0 \ \textrm{for all }\alpha\in \Delta^P \}.
\end{equation}
Next, we investigate the first properties of $\Cox(\GZip^\mu)$. For $\lambda\in X^*(L)$, recall that we may identify an element $f\in H^0(\GZip^\mu,\Lcal(\lambda))$ with a regular map $f\colon G_k\to \AA^1$ satisfying the relation \eqref{Gzip-glob-eq}. We obtain a natural injective ring homomorphism:
\begin{equation}
\xymatrix@M=5pt@C=15pt{
    \Cox(\GZip^\mu)\ar@{^{(}->}[r] & k[G],  \quad (f_\lambda)_{\lambda} \ar@{|->}[r] & \sum_{\lambda\in X^*(L)} f_\lambda.
}
\end{equation}
Hence, we may view the Cox ring of $\GZip^\mu$ as a subalgebra of $k[G]$.

\subsection{First properties}

Fix a Borel pair $(B,T)$ as in section \ref{sec-gp-not}. We consider $k[G]$ as a representation of $T\times T$. Recall that it decomposes as
\begin{equation}
    k[G]=\bigoplus_{(\lambda_1,\lambda_2)\in X^*(T)^2} k[G]_{\lambda_1,\lambda_2}
\end{equation}
where $k[G]_{\lambda_1,\lambda_2}$ is the set of regular maps $f\colon G_k \to \AA^1$ satisfying the relation $f(t_1 g t_2^{-1})=\lambda_1(t_1)\lambda_2(t_2)f(g)$ for all $t_1,t_2\in T$ and $g\in G_k$. For a character $\lambda\in X^*(T)$, define $\SS_\lambda$ as follows:
\begin{equation}
    \SS_\lambda = \{f\colon G_k\to\AA^1 \ | \ f(t g \varphi(t)^{-1})=\lambda(t)f(g), \ t\in T, \ g\in G_k \}.
\end{equation}

\begin{lemma}
For $\lambda\in X^*(T)$, the space $\SS_\lambda$ is the direct sum of the weight spaces $k[G]_{\lambda_1,\lambda_2}$ for pairs $(\lambda_1,\lambda_2)$ satisfying $\lambda_1+ p \sigma^{-1}(\lambda_2) = \lambda$.
\end{lemma}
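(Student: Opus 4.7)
The plan is to identify $\SS_\lambda$ as a restriction of the $T\times T$-isotypic decomposition of $k[G]$.

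The first step is to observe that the twisted $T$-action $t\cdot g=tg\varphi(t)^{-1}$ defining $\SS_\lambda$ arises from the $T\times T$-action $(t_1,t_2)\cdot g=t_1 g t_2^{-1}$ by restriction along the group homomorphism $\tau\colon T\to T\times T$, $t\mapsto (t,\varphi(t))$. On each biweight space $k[G]_{\lambda_1,\lambda_2}$, an element $t\in T$ therefore acts through the scalar $\lambda_1(t)\lambda_2(\varphi(t))$, and we obtain
\[
\SS_\lambda=\bigoplus k[G]_{\lambda_1,\lambda_2},
\]
the sum taken over those $(\lambda_1,\lambda_2)$ for which $\lambda_1(t)\lambda_2(\varphi(t))=\lambda(t)$ for all $t\in T$.

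The main step is to rewrite this condition as $\lambda_1+p\sigma^{-1}(\lambda_2)=\lambda$. Since $T$ is defined over $\FF_p$, the canonical identification $T^{(p)}=T$ makes $\varphi|_T$ the Frobenius endomorphism of $T$ as an $\FF_p$-group. For any $\FF_p$-scheme $X$, the absolute Frobenius agrees on $k$-points with the Galois action of $\sigma\in\Gal(k/\FF_p)$, so $\varphi(t)=\sigma(t)$ for $t\in T(k)$. Feeding this into the Galois-equivariance relation $\sigma(\chi(t))=(\sigma\chi)(\sigma t)$ for the evaluation pairing (applied with $\chi=\sigma^{-1}\lambda_2$) yields $\lambda_2(\varphi(t))=\bigl((\sigma^{-1}\lambda_2)(t)\bigr)^p=(p\sigma^{-1}\lambda_2)(t)$, hence $\lambda_2\circ\varphi=p\sigma^{-1}(\lambda_2)$ as characters of $T$, and the lemma follows.

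The main obstacle is tracking the direction of the Galois twist: because pullback by Frobenius acts contragrediently on characters, one picks up $\sigma^{-1}$ rather than $\sigma$. A useful cross-check is the paper's formula $\wp_*(\delta)=\delta-p\sigma(\delta)$ on cocharacters, which is the dual statement and follows from the same underlying identity $\varphi|_T(t)=\sigma(t)$ on $T(k)$.
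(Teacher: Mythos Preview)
Your argument is correct and follows essentially the same approach as the paper: the paper notes that $\SS_\lambda$ is $T\times T$-stable (because $T$ is commutative) and hence decomposes into biweight spaces, then identifies the relevant pairs $(\lambda_1,\lambda_2)$, while you phrase the same observation via restriction along $\tau\colon t\mapsto(t,\varphi(t))$ and read off the eigenvalue on each biweight space directly. Your added computation of $\lambda_2\circ\varphi=p\sigma^{-1}(\lambda_2)$ makes explicit what the paper leaves as ``clearly'', and your cross-check against $\wp_*$ on cocharacters is a nice sanity check for the direction of the Galois twist.
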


\begin{proof}
It is immediate to check that $\SS_\lambda$ is stable by the action of $T\times T$ on $k[G]$ (essentially because $T$ is commutative). Therefore, it decomposes as a sum of weight spaces  $k[G]_{\lambda_1,\lambda_2}$. The condition clearly implies that $\lambda_1+ p \sigma^{-1}(\lambda_2) = \lambda$.
\end{proof}

Moreover, we have the decomposition $k[G]=\bigoplus_{\lambda\in X^*(T)} \SS_\lambda$. We define a subalgebra $S\subset k[G]$ as follows. Let $R_{\mathrm{u}}(E')$ denote the unipotent radical of the group $E'$ (see \eqref{Eprime-eq}). It coincides with the set of $(x,y)\in E'$ such that $x\in R_{\mathrm{u}}(B)$. We define $S$ as the subring of function $G_k\to \AA^1$ that are invariant by $R_{\mathrm{u}}(E')$, in other words:
\begin{equation}\label{Sprime-ring}
    S \colonequals \{f\colon G_k\to\AA^1 \ | \ f(x g y^{-1})=f(g), \ (x,y)\in R_{\mathrm{u}}(E'), \ g\in G_k \}.
\end{equation}
Note that $S$ is naturally endowed with an action of $T$. Indeed, if $f\in S$ and $t\in T$, the function $t\cdot f$ defined by $(t\cdot f)(g) \colonequals f(tg\varphi(t)^{-1})$ lies again in $S$, because $(t,\varphi(t))$ (for $t\in T$) normalizes $R_{\mathrm{u}}(E')$. Hence, we can decompose $S$ as follows:
\begin{equation}
    S = \bigoplus_{\lambda\in X^*(T)} S_{\lambda}
\end{equation}
where $S_{\lambda}\subset \SS_\lambda$ is the subspace defined by
\begin{equation}\label{Spr-lam}
    S_\lambda \colonequals \{f\colon G_k\to\AA^1 \ | \ f(x g y^{-1})=\lambda(x)f(g), \ (x,y)\in E', \ g\in G_k \}.
\end{equation}
In the above definition of $S_\lambda$, we view $\lambda$ as a character of $E'$ using the identification $X^*(E')=X^*(B)=X^*(T)$ given by the first projection $\pr_1\colon E'\to B$. When $\lambda\in X^*(L)$, Corollary \ref{cor-Eprime-E} shows that $S_\lambda = H^0(\GZip^\mu,\Lcal(\lambda))$. Therefore, we may write
\begin{equation}
    \Cox(\GZip^\mu)  =  \bigoplus_{\lambda\in X^*(L)} S_\lambda.
\end{equation}
In particular, $\Cox(\GZip^\mu)$ is a subring of $S$. In section \ref{ring-autom-sec}, we will focus our attention to the larger ring $S$, which we will identify as the ring of (vector-valued) automorphic forms on $\GZip^\mu$. Since $\Cox(\GZip^\mu)$ identifies with a subring of $k[G]$, it is an integral domain. Let $K$ be the fraction field of $\Cox(\GZip^\mu)$. By the inclusion into $k[G]$, we may view $K$ as a subfield of $k(G)$.

\begin{lemma}\label{lem-K-Cox}
One has $\Cox(\GZip^\mu) = k[G]\cap K$.
\end{lemma}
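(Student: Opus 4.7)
The plan is to establish the non-trivial inclusion $k[G]\cap K\subseteq \Cox(\GZip^\mu)$. Let $f\in k[G]\cap K$, and write $f=g/h$ with $g,h\in \Cox(\GZip^\mu)$ and $h\neq 0$; this is possible since $K=\Frac(\Cox(\GZip^\mu))$.

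First I would show that $f\in S$. Both $g$ and $h$ lie in $\Cox(\GZip^\mu)\subset S$, hence are $R_{\mathrm{u}}(E')$-invariant for the natural action of $E'$ on $k[G]$. For any $u\in R_{\mathrm{u}}(E')$ one has $u\cdot(fh)=(u\cdot f)(u\cdot h)$, so the identities $fh=g$, $u\cdot g=g$ and $u\cdot h=h$ give $(u\cdot f - f)\,h=0$ in $k[G]$. Since $G_k$ is a connected smooth variety, $k[G]$ is an integral domain, and $h\neq 0$ forces $u\cdot f=f$. Hence $f\in S$.

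Next I would place $f$ into a $T$-weight decomposition. The homomorphism $\iota\colon T\to E'$, $t\mapsto (t,\varphi(t))$, is well-defined (indeed $(t,\varphi(t))\in E$ by construction of the zip group, and $t\in T\subset B$ forces $(t,\varphi(t))\in E'$). Via $\iota$ the group $T$ acts on $k[G]$ and one obtains a weight decomposition $k[G]=\bigoplus_{\lambda\in X^*(T)}k[G]_\lambda$; intersecting with $S$ recovers the $S_\lambda$ of \eqref{Spr-lam} up to the sign convention on weights, so that $S=\bigoplus_{\lambda\in X^*(T)}S_\lambda$ as recalled in the excerpt, and $\Cox(\GZip^\mu)=\bigoplus_{\lambda\in X^*(L)}S_\lambda$ is precisely the part of $S$ supported on the subgroup $X^*(L)\subset X^*(T)$. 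In particular, both $g$ and $h$ are supported on $X^*(L)$.

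Finally, I would exploit the coset grading induced by $\Lambda\colonequals X^*(T)/X^*(L)$. Regrouping the $T$-weight decomposition of $f\in S$ by cosets, write $f=\sum_{[\lambda]\in \Lambda}f_{[\lambda]}$. Because multiplication in $k[G]$ adds $T$-weights and $h$ is supported on the zero coset, the product $f_{[\lambda]}h$ is supported on the coset $[\lambda]$; comparing the $\Lambda$-coset decompositions of both sides of $fh=g$, and using that $g$ is supported on the zero coset, yields $f_{[\lambda]}h=0$ for every $[\lambda]\neq 0$. Integrality of $k[G]$ together with $h\neq 0$ then forces $f_{[\lambda]}=0$ for $[\lambda]\neq 0$, so $f=f_{[0]}\in \bigoplus_{\lambda\in X^*(L)}S_\lambda=\Cox(\GZip^\mu)$. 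No substantial obstacle is expected; the only point needing care is the compatibility between the $T$-action arising from $\iota$ and the definition of the $S_\lambda$ in \eqref{Spr-lam}, which is routine.
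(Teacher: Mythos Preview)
Your proposal is correct and follows essentially the same two-step strategy as the paper: first show $f\in S$ via $R_{\mathrm{u}}(E')$-invariance, then show the $T$-weights of $f$ lie in $X^*(L)$. The only differences are cosmetic: where the paper invokes density to pass from invariance on an open set to invariance everywhere, you use the integral-domain trick $(u\cdot f - f)h=0$; and where the paper argues that $T$ acts on $K$, hence on $k[G]\cap K$, and then decomposes into weight spaces before checking each weight lies in $X^*(L)$, you bypass this by grading directly by cosets $X^*(T)/X^*(L)$ and comparing both sides of $fh=g$. Your coset argument is arguably slightly cleaner, since it avoids having to justify that the individual weight components of $f$ remain in $K$.
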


\begin{proof}
Since any $f\in \Cox(\GZip^\mu)$ is invariant under the group $R_{\mathrm{u}}(E')$, it is clear that any function $f\in K$ is also invariant under this group. Therefore, any function $f\in k[G]\cap K$ is regular on $G_k$ and $R_{\mathrm{u}}(E')$-invariant on some open subset. By density, it is $R_{\mathrm{u}}(E')$-invariant everywhere, so we deduce $f\in S$. Therefore, $k[G]\cap K\subset S$. Moreover, since $\Cox(\GZip^\mu)$ is a sum of weight spaces of $S$, it is clear that $T$ acts on $\Cox(\GZip^\mu)$, and hence also on $K$. Therefore, we can decompose $k[G]\cap K$ as a sum of $S_\lambda$ for certain characters $\lambda\in X^*(T)$. But if $f\in S_\lambda$ can be written as $f=\frac{h_1}{h_2}$ for $h_1, h_2\in \Cox(\GZip^\mu)$, then by decomposing $h_1$, $h_2$ into homogeneous elements we see that $\lambda$ must lie in $X^*(L)$. Thus $k[G]\cap K=\Cox(\GZip^\mu)$.
\end{proof}

We deduce the following:

\begin{proposition} \label{Cox-first} \ 
\begin{assertionlist}
    \item $\Cox(\GZip^\mu)$ is integrally closed.
    \item The group of units of the ring $\Cox(\GZip^\mu)$ identifies with $k[G]^\times$ (the group of nowhere vanishing regular functions on $G$).
\end{assertionlist}
\end{proposition}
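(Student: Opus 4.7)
The strategy for both assertions is to use Lemma \ref{lem-K-Cox}, which realizes $\Cox(\GZip^\mu)$ as $k[G]\cap K$ inside $k(G)$, where $K=\Frac(\Cox(\GZip^\mu))$. For (i), since $G_k$ is smooth and hence normal, $k[G]$ is integrally closed in $k(G)$. If $f\in K$ is integral over $\Cox(\GZip^\mu)=k[G]\cap K$, then $f$ is a fortiori integral over $k[G]$, whence $f\in k[G]$, so $f\in k[G]\cap K=\Cox(\GZip^\mu)$. This settles (i).

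For (ii), one inclusion is immediate: a unit $f$ of $\Cox(\GZip^\mu)$ with inverse $g\in \Cox(\GZip^\mu)$ satisfies $fg=1$ inside $k[G]$, so $f\in k[G]^\times$. Conversely, let $f\in k[G]^\times$. By the identification $X^*(G)\cong \Ecal(G)$ recalled in Section \ref{sec-pic}, we may write $f=c\lambda$ with $c\in k^\times$ and $\lambda\in X^*(G_k)$. It suffices to show $f\in \Cox(\GZip^\mu)$: the inverse $f^{-1}=c^{-1}\lambda^{-1}$ then lies in $\Cox(\GZip^\mu)$ by the same argument applied to $\lambda^{-1}\in X^*(G_k)$, so $f$ is a unit in $\Cox(\GZip^\mu)$.

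The key computation is the following. Fix $(x,y)\in E$ and $g\in G_k$. Write $x=\ell u$ with $\ell=\theta^P_L(x)\in L$ and $u\in R_{\mathrm{u}}(P)$, and similarly $y=mv$ with $m=\theta^Q_M(y)\in M$ and $v\in R_{\mathrm{u}}(Q)$; the defining relation \eqref{zipgroup} of $E$ forces $m=\varphi(\ell)$. Since a character of $G_k$ vanishes on any unipotent subgroup, we have $\lambda(x)=\lambda(\ell)$ and $\lambda(y)=\lambda(\varphi(\ell))$. Define $\mu\colonequals \lambda|_M\circ\varphi\in X^*(L)$, so that $\mu(\ell)=\lambda(\varphi(\ell))$ for all $\ell\in L$. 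Then
\[
f(xgy^{-1})=c\,\lambda(\ell)\lambda(g)\lambda(\varphi(\ell))^{-1}=(\lambda|_L-\mu)(\ell)\cdot f(g).
\]
This is exactly the invariance relation defining a global section of $\Lcal(\nu)$ on $\GZip^\mu$ with $\nu=\lambda|_L-\mu\in X^*(L)$, so $f\in H^0(\GZip^\mu,\Lcal(\nu))\subset \Cox(\GZip^\mu)$, as required.

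I do not anticipate a genuine obstacle; once Lemma \ref{lem-K-Cox} is in hand, everything reduces to this elementary verification. Note in particular that the argument does not require $\lambda$ to descend to $\FF_p$, because the character $\mu$ is defined intrinsically by restricting $\lambda$ to $M\subset G_k$ and composing with the relative Frobenius $\varphi\colon L\to M$.
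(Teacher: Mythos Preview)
Your proof is correct and follows essentially the same approach as the paper's: part (i) is identical, and for (ii) both arguments write $f=c\lambda$ with $\lambda\in X^*(G)$ and observe that $f$ lands in a graded piece $H^0(\GZip^\mu,\Lcal(\nu))$ for some $\nu\in X^*(L)$. Your version is simply more explicit about computing $\nu=\lambda|_L-(\lambda|_M\circ\varphi)$, whereas the paper just asserts $f\in S_\lambda$; one cosmetic remark is that your choice of the letter $\mu$ for the auxiliary character clashes with the cocharacter $\mu$ fixed throughout the paper.
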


\begin{proof}
Let $f\in K$ and suppose that $f$ is integral over $\Cox(\GZip^\mu)$. Since $G$ is smooth, it is normal and thus $k[G]$ is integrally closed. We deduce that $f\in k[G]\cap K$, and hence $f\in \Cox(\GZip^\mu)$ by Lemma \ref{lem-K-Cox}. Finally, it is clear that any unit of $\Cox(\GZip^\mu)$ lies in $k[G]^\times$. Conversely, any element of $k[G]^\times$ can be written as $f=a\lambda$ for $a\in k^*$ and $\lambda\in X^*(G)$. In particular, $f\in S_{\lambda}$ and the result follows.
\end{proof}

Finally, we determine the transcendence degree of $K$ over $k$. We may define similarly the Cox ring of the open substack $\Ucal_\mu\subset \GZip^\mu$ as follows:
\begin{equation}
    \Cox(\Ucal_\mu) \colonequals \bigoplus_{\lambda\in X^*(L)} H^0(\Ucal_\mu,\Lcal(\lambda)).
\end{equation}
Clearly $\Cox(\GZip^\mu)\subset \Cox(\Ucal_\mu)$. Furthermore, we claim that these two rings share the same field of fractions. Indeed, if $f\in H^0(\Ucal_\mu,\Lcal(\lambda))$, we can multiply $f$ with a large power of a $\mu$-ordinary Hasse invariant to remove the poles along the codimension one strata $Z_{\alpha}$ for $\alpha\in \Delta^P$. This proves the claim.

For each $\lambda\in X^*(L)$, the space $H^0(\Ucal_\mu,\Lcal(\lambda))$ is one-dimensional if $\lambda$ is trivial on the subgroup $L_\varphi$, and is zero otherwise (see \eqref{H0-mu-eq}). Let $N_{\mu}$ denote the order of the finite group scheme $L_\varphi$, so that $N_\mu \lambda$ vanishes on $L_{\varphi}$ for any $\lambda\in X^*(L)$. Hence, the subring of $\Cox(\Ucal_\mu)$ defined as the direct sum of $H^0(\Ucal_\mu,\Lcal(\lambda))$ for $\lambda\in N_\mu X^*(L)$ is isomorphic to $k[X^*(L)]$. Furthermore, the ring extension $k[X^*(L)]\subset \Cox(\Ucal_\mu)$ is integral. In particular, the field of fractions of $\Cox(\Ucal_\mu)$ is a finite extension of that of $k[X^*(L)]$. We deduce the following:

\begin{proposition}\label{cox-trdeg}
    The transcendence degree of $K$ over $k$ is $\rank_{\ZZ}(X^*(L))$.
\end{proposition}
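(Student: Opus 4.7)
The plan is to exploit the observation preceding the proposition that $K = \Frac(\Cox(\GZip^\mu)) = \Frac(\Cox(\Ucal_\mu))$, and compute the transcendence degree on the open stratum side, where global sections are under good control thanks to the identification $H^0(\Ucal_\mu, \Lcal(\lambda)) = V_I(\lambda)^{L_\varphi}$ given in \eqref{H0-mu-eq}.

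Concretely, I would first isolate inside $\Cox(\Ucal_\mu)$ the subring
\[
A \colonequals \bigoplus_{\lambda \in X^*(L)} H^0(\Ucal_\mu, \Lcal(N_\mu \lambda)),
\]
where $N_\mu = |L_\varphi|$. Since $N_\mu \lambda$ is trivial on $L_\varphi$ for every $\lambda \in X^*(L)$, each graded piece of $A$ is one-dimensional, and the multiplication of sections corresponds to the addition of characters; thus $A$ is isomorphic to the group algebra $k[X^*(L)]$, which is a Laurent polynomial algebra in $r \colonequals \rank_\ZZ X^*(L)$ variables. Its fraction field has transcendence degree exactly $r$ over $k$.

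Next, I would verify that the extension $A \subset \Cox(\Ucal_\mu)$ is integral. This is immediate: if $f \in H^0(\Ucal_\mu, \Lcal(\lambda))$ is a homogeneous element, then $f^{N_\mu} \in H^0(\Ucal_\mu, \Lcal(N_\mu \lambda)) \subset A$, so $f$ satisfies the monic relation $T^{N_\mu} - f^{N_\mu} = 0$ over $A$. Since any element of $\Cox(\Ucal_\mu)$ is a finite sum of such homogeneous elements and integral elements form a subring, $\Cox(\Ucal_\mu)$ is integral over $A$.

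Finally, passing to fraction fields, $K = \Frac(\Cox(\Ucal_\mu))$ is an algebraic extension of $\Frac(A)$. Since $K$ is finitely generated over $k$ (as the fraction field of a subring of $k(G)$, which itself is finitely generated over $k$), this extension is in fact finite; regardless, transcendence degree is preserved under algebraic extensions, giving $\operatorname{tr.deg}_k K = \operatorname{tr.deg}_k \Frac(A) = r$. I do not anticipate any real obstacle: once the preceding discussion is in place, the argument is essentially bookkeeping, and the only subtle point is the integrality of $\Cox(\Ucal_\mu)$ over $A$, which is handled by the elementary observation above.
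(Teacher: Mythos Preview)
Your proposal is correct and follows essentially the same approach as the paper: the paragraph preceding the proposition already lays out the subring $A \simeq k[X^*(L)]$ inside $\Cox(\Ucal_\mu)$ and asserts the integrality of the extension, and you have simply fleshed out the details of that integrality (via $f^{N_\mu} \in A$) and the passage to fraction fields. There is no meaningful difference in strategy.
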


\subsection{Main result}

We now state and prove the main result of this section. We first note that $\Cox(\GZip^\mu)$ satisfies the four assumptions stated before Lemma \ref{lem-equiv-fg}. Indeed, it is an integrally closed integral domain by Proposition \ref{Cox-first}. Since its field of fractions $K$ is contained in $k(G)$, it is finitely generated over $k$. Finally, the space $H^0(\GZip^\mu,\Lcal(\lambda))$ has dimension $\leq 1$ for all $\lambda\in X^*(L)$ by Proposition \ref{prop-H0-dim}. Hence, we may apply Lemma \ref{lem-equiv-fg} in proving our main theorem below.

\begin{theorem}\label{cox-fin-gen-thm}
The ring $\Cox(\GZip^\mu)$ is a finitely generated $k$-algebra.
\end{theorem}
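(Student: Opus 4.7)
The plan is to apply the equivalence given by Lemma \ref{lem-equiv-fg}, which reduces the finite generation of $\Cox(\GZip^\mu)$ as a $k$-algebra to the finite generation of the $\QQ_{\geq 0}$-monoid $\Eff(\GZip^\mu)_{\QQ_{\geq 0}}$. As noted just before the theorem, $\Cox(\GZip^\mu)$ satisfies the four standing hypotheses of the lemma (integral domain, finitely generated fraction field, integrally closed, one-dimensional graded pieces), so the lemma applies.

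The first step is to obtain an explicit description of $\Eff(\GZip^\mu)_{\QQ_{\geq 0}}$ inside $X^*(L)_{\QQ}$. By Proposition \ref{prop-line}, an element $\lambda \in X^*(L)$ belongs to $\Eff(\GZip^\mu)$ if and only if $\lambda$ is trivial on the finite group scheme $L_\varphi$ and $\langle \lambda, \delta_\alpha\rangle \geq 0$ for every $\alpha \in \Delta^P$. After clearing denominators and multiplying by the order $N_\mu$ of $L_\varphi$, the condition of triviality on $L_\varphi$ disappears in $X^*(L)_\QQ$. More precisely, I claim
\begin{equation*}
\Eff(\GZip^\mu)_{\QQ_{\geq 0}} = \mathcal{C} \colonequals \left\{\lambda \in X^*(L)_{\QQ} \;\middle|\; \langle \lambda, \delta_\alpha \rangle \geq 0 \text{ for all } \alpha \in \Delta^P\right\}.
\end{equation*}
The inclusion $\Eff(\GZip^\mu)_{\QQ_{\geq 0}} \subseteq \mathcal{C}$ is immediate since each defining inequality is preserved by non-negative rational combinations. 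Conversely, given $\lambda \in \mathcal{C}$, there exists an integer $M \geq 1$ such that $M\lambda \in X^*(L)$ and still satisfies $\langle M\lambda, \delta_\alpha\rangle \geq 0$ for all $\alpha \in \Delta^P$. Applying Corollary \ref{cor-line-H0} to $M\lambda$ yields $N \geq 1$ with $NM\lambda \in \Eff(\GZip^\mu)$, so $\lambda \in \Eff(\GZip^\mu)_{\QQ_{\geq 0}}$.

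The second step is to observe that $\mathcal{C}$ is a rational polyhedral cone in the finite-dimensional $\QQ$-vector space $X^*(L)_{\QQ}$: it is the intersection of finitely many rational closed half-spaces, one for each root $\alpha \in \Delta^P$. By the Minkowski–Weyl theorem (or Gordan's lemma applied to the dual description), every such rational polyhedral cone is finitely generated as a $\QQ_{\geq 0}$-monoid. Hence $\Eff(\GZip^\mu)_{\QQ_{\geq 0}} = \mathcal{C}$ is finitely generated, and Lemma \ref{lem-equiv-fg} concludes that $\Cox(\GZip^\mu)$ is a finitely generated $k$-algebra.

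There is no real obstacle here: the work has already been done in the preceding sections. The crucial inputs that make the proof essentially formal are (a) the one-dimensionality of each graded piece (Proposition \ref{prop-H0-dim}), which supplies the last hypothesis of Lemma \ref{lem-equiv-fg}; (b) the explicit half-space description of the effective cone coming from Proposition \ref{prop-line} and Corollary \ref{cor-line-H0}; and (c) the integral-closedness of $\Cox(\GZip^\mu)$ established in Proposition \ref{Cox-first}, which is what powers the integral-closure argument in Lemma \ref{lem-equiv-fg}. The only mild subtlety is checking the equality $\Eff(\GZip^\mu)_{\QQ_{\geq 0}} = \mathcal{C}$, where the triviality-on-$L_\varphi$ condition must be absorbed by rescaling.
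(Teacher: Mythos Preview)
Your proof is correct and follows essentially the same approach as the paper: apply Lemma \ref{lem-equiv-fg} (whose hypotheses were verified just before the theorem), use Proposition \ref{prop-line} to identify $\Eff(\GZip^\mu)_{\QQ_{\geq 0}}$ with the rational polyhedral cone $\{\lambda\in X^*(L)_\QQ \mid \langle \lambda,\delta_\alpha\rangle\geq 0 \text{ for all }\alpha\in\Delta^P\}$, and conclude by the finite generation of polyhedral cones. Your justification of the equality $\Eff(\GZip^\mu)_{\QQ_{\geq 0}}=\mathcal{C}$ via Corollary \ref{cor-line-H0} is in fact slightly more explicit than the paper's, which simply asserts it follows from the finiteness of $L_\varphi$.
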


\begin{proof}
By Lemma \ref{lem-equiv-fg}, it suffices to show that $\Eff(\GZip^\mu)_{\QQ_{\geq 0}}$ is finitely generated as a $\QQ_{\geq 0}$-monoid. By Proposition \ref{prop-line}, the set $\Eff(\GZip^\mu)$ coincides with the set of $\lambda\in X^*(L)$ such that $\lambda$ is trivial on $L_\varphi$ and $\langle \lambda, \delta_\alpha \rangle \geq 0$ for all $\alpha\in \Delta^P$. Since $L_\varphi$ is a finite group, we see immediately that:
\begin{equation}
    \Eff(\GZip^\mu)_{\QQ_{\geq 0}} = \{ \lambda\in X^*(L)_\QQ \ | \  \langle \lambda, \delta_\alpha \rangle \geq 0, \ \textrm{for all} \ \alpha\in \Delta^P\}.
\end{equation}
Hence, $\Eff(\GZip^\mu)_{\QQ_{\geq 0}}$ is defined by finitely many inequalities of the form $\ell_i(\lambda)\geq 0$ given by linear forms $\ell_i\colon X^*(L)_\QQ\to \QQ$. Thus $\Eff(\GZip^\mu)_{\QQ_{\geq 0}}$ is a polyhedral cone of $X^*(L)_\QQ$, and in particular is finitely generated. This finishes the proof.
\end{proof}

Applying the usual terminology of projective geometry to this more general setting, we may say that the stack of $G$-zips is a Mori dream space, in view of Theorem \ref{thm-MDS-cox}. Finally, we investigate the simpler case when $G$ has a trivial Picard group. Assume that $\Pic(G)=0$. In this case, for each $\alpha\in \Delta^P$, there exists a map $f_\alpha\colon G\to \AA^1$ such that $\div(f_\alpha)=\overline{Z}_\alpha$. Recall the following lemma:

\begin{lemma}[{\cite[Lemma 5.1.2]{Imai-Koskivirta-partial-Hasse}}]
Let $H$ be a connected smooth algebraic $k$-group and $X$ a $G$-variety. Let $f\colon X\to \AA^1$ be a regular map such that all components of $\div(f)$ are stable by $H$. Then there exists a character $\lambda\in X^*(H)$ such that 
\begin{equation}
    f(h\cdot x)=\lambda(h) f(x)
\end{equation}
for all $h\in H$ and $x\in X$. In particular, $f$ identifies with a section of $\Lcal(\lambda)$ over the quotient stack $[H\backslash X]$.
\end{lemma}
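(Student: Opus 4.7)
The plan is to compare $f$ with its $H$-translates $f\circ(h\cdot -)$: since each component of $\div(f)$ is $H$-stable, these will share the same divisor, and hence their ratio will be a nowhere vanishing regular function. I would then invoke a Rosenlicht-type unit theorem to show that this ratio depends only on $h$, producing the desired character $\lambda$.

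We may assume $X$ is irreducible. Set $U := X\setminus |\div(f)|$; since every component of $\div(f)$ is $H$-stable, $U$ is an $H$-stable open subscheme of $X$ on which $f$ is nowhere vanishing, \ie $f|_U \in \Ocal(U)^\times$. For each $h\in H$, the hypothesis implies $\div(f\circ(h\cdot -))=\div(f)$, so $f(h\cdot x)/f(x)$ defines a unit on $U$. Assembling in $h$ yields a morphism
\begin{equation*}
 \Phi\colon H\times U \longrightarrow \GG_{\mathrm{m}}, \quad (h,x)\longmapsto f(h\cdot x)/f(x),
\end{equation*}
\ie a nowhere vanishing regular function on the integral $k$-variety $H\times U$. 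Rosenlicht's theorem on units on a product of integral $k$-varieties then yields a factorization $\Phi(h,x) = \alpha(h)\beta(x)$ with $\alpha\in \Ocal(H)^\times$ and $\beta\in \Ocal(U)^\times$. Evaluating at $h=1$ forces $\beta$ to be constant, so $\Phi$ depends only on $h$; we define $\lambda(h) := \Phi(h,x)$ for any $x\in U$.

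That $\lambda$ is a homomorphism follows from the cocycle relation $f((h_1h_2)\cdot x) = f(h_1\cdot(h_2\cdot x)) = \lambda(h_1)\lambda(h_2)f(x)$. The sought identity $f(h\cdot x)=\lambda(h)f(x)$ then holds on the dense open $H\times U\subset H\times X$, and extends to all of $H\times X$ by regularity of both sides. The final assertion is immediate from the description \eqref{H0-quot-eq} of sections of $\Lcal(\lambda)$ on $[H\backslash X]$.

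The main obstacle is precisely the step showing that $f(h\cdot x)/f(x)$ depends only on $h$; this is where the connectedness and integrality hypotheses, and the Rosenlicht unit theorem, enter. An alternative route that stays internal to the paper would be to invoke Proposition \ref{exseq}: the $H$-stable divisor $\div(f)$ equips the trivial bundle $\Ocal_X \simeq \Ocal_X(\div(f))$ with an $H$-linearization for which $f$ is tautologically equivariant, and the character in $X^*(H)$ whose image in $\Pic^H(X)$ realizes this linearization is the desired $\lambda$.
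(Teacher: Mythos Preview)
The paper does not supply its own proof of this lemma: it is quoted verbatim from \cite[Lemma 5.1.2]{Imai-Koskivirta-partial-Hasse} and used as a black box. So there is no in-paper argument to compare against.

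Your argument is correct and is in fact the standard one. The only point worth tightening is the first sentence: you should note that, in the intended application and implicitly in the statement, $X$ is a normal integral variety (otherwise $\div(f)$ is not defined), so the reduction to irreducible $X$ is automatic rather than an assumption. The invocation of Rosenlicht's unit theorem to split $\Phi\in\Ocal(H\times U)^\times$ as $\alpha(h)\beta(x)$ is exactly the right tool; this is the same circle of ideas underlying the Knop--Kraft--Vust results \cite{Knop-Kraft-Vust-G-variety} that the paper already cites when identifying $\Ecal(G)\simeq X^*(G)$. Your alternative route via Proposition~\ref{exseq} is also legitimate and arguably closer to how the paper organizes things, since that exact sequence packages precisely the passage from an $H$-stable divisor class to a character of $H$.
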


We deduce from this lemma that there exists $\lambda_\alpha\in X^*(L)$ such that $f_\alpha$ is a section of $\Lcal(\lambda_\alpha)$ over $\GZip^\mu$. Since $f_{\alpha}$ only vanishes along $\overline{Z}_\alpha$,  Lemma \ref{sgn-prop} implies that the character $\lambda_\alpha\in X^*(L)$ must satisfy the relations
\begin{equation}
    \langle \lambda_\alpha,\delta_\beta\rangle =0 \quad \textrm{for all} \ \beta\in \Delta^P\setminus\{\alpha\}.
\end{equation}
Recall by Proposition \ref{prop-lin-indep} that the linear forms $\lambda\mapsto \langle \lambda,\delta_\alpha\rangle$ for $\alpha\in \Delta^P$ form a basis of the dual vector space of $\left(X^*(L)/X^*(G)\right)\otimes_\ZZ \QQ$. The above relations show that (up to rescaling), the system $\{\lambda_\alpha\}_{\alpha\in \Delta^P}$ (viewed as a subset of $\left(X^*(L)/X^*(G)\right)\otimes_\ZZ \QQ$) is the dual basis of these linear forms. Write $k[X^*(G)]$ for the subalgebra of $k[G]$ generated by characters $G_k\to \GG_{\mathrm{m}}$.

\begin{proposition}
Assume that $\Pic(G)=0$. The elements $(f_{\alpha})_{\alpha\in \Delta^P}$ are algebraically independent over $k[X^*(G)]$ and we have
\begin{equation}\label{cox-Ha-eq}
    \Cox(\GZip^\mu)=k[X^*(G)][\{f_{\alpha}\}_{\alpha\in \Delta^P}]
\end{equation}
In particular, $\Cox(\GZip^\mu)$ is a polynomial algebra over $k[X^*(G)]$.
\end{proposition}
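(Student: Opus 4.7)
The plan is to prove the two assertions separately, using the grading of $\Cox(\GZip^\mu)$ by $X^*(L)$ together with the description of its units from Proposition \ref{Cox-first}.

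For the algebraic independence of $(f_\alpha)_{\alpha\in\Delta^P}$ over $k[X^*(G)]$: each character $\chi\in X^*(G)$ is $E$-semi-invariant of a definite weight $\chi_L\in X^*(L)$ given by $\chi_L(\ell)=\chi(\ell)/\chi(\varphi(\ell))$, and each $f_\alpha$ is homogeneous of weight $\lambda_\alpha$. Suppose there is a relation
\begin{equation*}
\sum_{\underline{n}} c_{\underline{n}} \prod_\alpha f_\alpha^{n_\alpha} = 0,\qquad c_{\underline{n}} = \sum_\chi a_{\underline{n},\chi}\,\chi \in k[X^*(G)].
\end{equation*}
Collecting terms of a given weight $\mu\in X^*(L)$ yields
\begin{equation*}
\sum_{(\underline{n},\chi)\,:\,\chi_L + \sum_\alpha n_\alpha \lambda_\alpha = \mu} a_{\underline{n},\chi}\, \chi \prod_\alpha f_\alpha^{n_\alpha} = 0.
\end{equation*}
By Proposition \ref{prop-lin-indep}, the classes $\bar\lambda_\alpha$ form a basis of $(X^*(L)/X^*(G))_\QQ$, so the coset of $\mu$ in $X^*(L)/X^*(G)$ determines $\underline{n}$ uniquely. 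Fixing such $\underline{n}$, integrality of $\Cox(\GZip^\mu)$ together with $\prod_\alpha f_\alpha^{n_\alpha}\neq 0$ yields $\sum_\chi a_{\underline{n},\chi}\chi=0$, and the linear independence of distinct characters of $G$ as functions in $k[G]$ then forces each $a_{\underline{n},\chi}=0$. Hence every $c_{\underline{n}}$ vanishes.

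For the ring equality, the inclusion $k[X^*(G)][\{f_\alpha\}]\subset \Cox(\GZip^\mu)$ is immediate. Conversely, any nonzero homogeneous $f\in S_\lambda$ (for some $\lambda\in X^*(L)$) may be viewed as a regular $E$-equivariant map $G_k\to \AA^1$. Its divisor is effective and $E$-stable, so it has the form $\div(f) = \sum_\alpha m_\alpha \overline{Z}_\alpha$ with $m_\alpha\geq 0$. The rational function
\begin{equation*}
g\colonequals f\cdot \prod_\alpha f_\alpha^{-m_\alpha}
\end{equation*}
then has zero divisor and thus is a unit in $k[G]$ (by smoothness of $G$). Proposition \ref{Cox-first}(2) gives $g=a\chi$ for some $a\in k^\times$ and $\chi\in X^*(G)$, and therefore $f=a\chi\prod_\alpha f_\alpha^{m_\alpha} \in k[X^*(G)][\{f_\alpha\}]$.

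The main delicate point is the uniqueness of $\underline{n}$ in the grading argument of the first paragraph, which relies on Proposition \ref{prop-lin-indep} combined with the identification $\Pic(\GZip^\mu)_\QQ\cong (X^*(L)/X^*(G))_\QQ$ extracted from \eqref{Pic-zip}; everything else is formal once the previously established results are in hand.
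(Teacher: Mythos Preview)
Your proof is correct. The argument for the ring equality (second paragraph) is identical to the paper's: write $\div(f)=\sum m_\alpha\overline{Z}_\alpha$, divide off the $f_\alpha^{m_\alpha}$, and identify the remaining unit as $a\chi$ with $\chi\in X^*(G)$.

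For the algebraic independence, however, you take a different route. The paper proves the ring equality \emph{first} and then reads off the independence from the transcendence degree computation of Proposition~\ref{cox-trdeg}: since $\Cox(\GZip^\mu)=k[X^*(G)][\{f_\alpha\}]$ has transcendence degree $\rank_\ZZ X^*(L)=\rank_\ZZ X^*(G)+|\Delta^P|$, the $|\Delta^P|$ generators $f_\alpha$ must be algebraically independent over $k[X^*(G)]$. Your approach instead argues directly via the $X^*(L)$-grading, using that the classes $\bar\lambda_\alpha$ are a basis of $(X^*(L)/X^*(G))_\QQ$ (the discussion preceding the Proposition, combined with Proposition~\ref{prop-lin-indep}). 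One small point worth making explicit: the weight $\chi_L=\chi-p\sigma^{-1}\chi$ of a character $\chi\in X^*(G)$ lands in $X^*(G)|_L\subset X^*(L)$ (via restriction), so passing to the quotient $(X^*(L)/X^*(G))_\QQ$ really does kill the $\chi_L$ term and leave only $\sum n_\alpha\bar\lambda_\alpha=\bar\mu$. With that clarified, your grading argument is fine and avoids invoking Proposition~\ref{cox-trdeg}; the paper's transcendence-degree argument is shorter but requires that extra input.
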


\begin{proof}
Let $f\in H^0(\GZip^\mu,\Lcal(\lambda))$ for $\lambda\in X^*(L)$. We view $f$ as a regular map $f\in k[G]$. By $E$-equivariance, the divisor $\div(f)$ can be written
\begin{equation}
    \div(f)=\sum_{\alpha\in \Delta^P} m_\alpha \overline{Z}_\alpha
\end{equation}
for some integers $m_\alpha\geq 0$. We deduce that there exists a nowhere vanishing function $f_0\in k[G]^\times$ such that
\begin{equation}
    f=f_0 \prod_{\alpha\in \Delta^P} f_{\alpha}^{m_\alpha}.
\end{equation}
Furthermore $f_0=a\lambda$ for some $a\in k^*$ and $\chi\in X^*(G)$. This shows that \eqref{cox-Ha-eq} holds. Finally, by Proposition \ref{cox-trdeg}, the transcendence degree of $\Cox(\GZip^\mu)$ is the rank of the $\ZZ$-module $X^*(L)$, which is also $\rank_{\ZZ}(X^*(G))+|\Delta^P|$. This shows that $(f_{\alpha})_{\alpha\in \Delta^P}$ are algebraically independent over $k[X^*(G)]$.
\end{proof}

\begin{corollary}\label{cor-semisimple}
Assume that $G$ is semisimple and that $\Pic(G)$ is trivial. In this case, $\Cox(\GZip^\mu)$ is a polynomial algebra over $k$ of dimension $\rank_{\ZZ}(X^*(L))$.
\end{corollary}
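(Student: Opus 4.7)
The plan is to deduce this directly from the preceding proposition. Recall that proposition established, under the hypothesis $\Pic(G)=0$, the decomposition
\begin{equation*}
\Cox(\GZip^\mu)=k[X^*(G)]\bigl[\{f_{\alpha}\}_{\alpha\in \Delta^P}\bigr]
\end{equation*}
as a polynomial algebra in the variables $f_\alpha$ over the group algebra $k[X^*(G)]$. Consequently, it suffices to verify two things: first, that $X^*(G)=0$ when $G$ is semisimple, so that $k[X^*(G)]=k$; and second, that the number of polynomial generators, namely $|\Delta^P|$, agrees with $\rank_\ZZ(X^*(L))$.

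For the first point, since $G$ is connected and semisimple, it equals its derived group $[G,G]$. Any character $\chi\colon G\to \GG_{\mathrm{m}}$ factors through $G/[G,G]$, which is trivial; hence $X^*(G)=0$. Substituting into the formula above gives $\Cox(\GZip^\mu)=k[\{f_\alpha\}_{\alpha\in \Delta^P}]$, a polynomial algebra in $|\Delta^P|$ variables.

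For the second point, the Levi $L$ is reductive with maximal torus $T$, and $X^*(L)$ identifies with the character group of the torus quotient $L/L^{\mathrm{der}}$. Its rank equals $\dim(T)-\rank(\text{root system of }L)=|\Delta|-|I|=|\Delta^P|$, using that $G$ semisimple gives $\dim T = |\Delta|$. Alternatively, one may bypass this computation by invoking Proposition \ref{cox-trdeg}: the transcendence degree of the fraction field of $\Cox(\GZip^\mu)$ over $k$ is both $\rank_\ZZ(X^*(L))$ and, since $\Cox(\GZip^\mu)$ is a polynomial algebra in $|\Delta^P|$ variables, also equal to $|\Delta^P|$. Either way, the conclusion follows. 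There is no real obstacle here; the content of the corollary lies entirely in the preceding proposition, and the only substantive input is the vanishing $X^*(G)=0$ for semisimple $G$.
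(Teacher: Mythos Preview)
Your proof is correct and follows exactly the same approach as the paper: the paper's proof is the single line ``Since $G$ is semisimple, we have $X^*(G)=0$,'' after which the result is immediate from the preceding proposition (which already records the equality $\rank_\ZZ(X^*(L))=\rank_\ZZ(X^*(G))+|\Delta^P|$). Your additional justification of the dimension count is a reasonable elaboration but not a different argument.
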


\begin{proof}
Since $G$ is semisimple, we have $X^*(G)=0$.
\end{proof}

\section{The ring of automorphic forms} \label{ring-autom-sec}
In this section, we give a geometric interpretation to the larger subring $S\subset k[G]$ introduced in \eqref{Sprime-ring}. We identify it as the Cox ring of the stack of $G$-zip flags.

\subsection{\texorpdfstring{The stack of $G$-zip flags}{}}\label{sec-zip-flag}

The stack of $G$-zip flags was first introduced in \cite{Goldring-Koskivirta-Strata-Hasse} as a group-theoretical analogue of the flag space considered by Ekedahl--van de Geer in \cite{Ekedahl-Geer-EO}. It was used in the construction of generalized Hasse invariants on Ekedahl--Oort strata in \cite{Goldring-Koskivirta-Strata-Hasse}. It can be defined as the quotient stack
\begin{equation}
    \GF^\mu = \left[E\backslash \left( G_k\times P/B \right) \right]
\end{equation}
where $E$ acts on $G_k\times P/B$ by the rule $(x,y)\cdot (g,hB)=(xgy^{-1},xhB)$ for all $(x,y)\in E$, $g\in G_k$, $h\in P$. The first projection $G_k\times P/B\to G$ induces a natural map $\pi\colon \GF^\mu\to \GZip^\mu$ whose fibers are isomorphic to $P/B$. Furthermore, the inclusion $G_k\subset G_k\times P/B$, $g\mapsto (g,1)$ yields an isomorphism of stacks
\begin{equation}\label{isom-GF}
    \GF^\mu \simeq [E'\backslash G_k]
\end{equation}
where $E'$ denotes again the subgroup $E'\colonequals E\cap (B\times G)$ (which already appeared in \eqref{Eprime-eq}). Via the isomorphism \eqref{isom-GF}, the map $\pi$ identifies with the natural projection $[E'\backslash G_k]\to [E\backslash G_k]$. Let $\lambda\in X^*(T)$ be a character. We identify the groups $X^*(E')=X^*(B)=X^*(T)$ via the first projection $\pr_1\colon E'\to B$. We may apply the construction explained in section \ref{sec-vb-quot} to attach to $\lambda$ a line bundle $\Vcal_{\flag}(\lambda)$ on $\GF^\mu$. By \cite[Proposition 3.2.1]{Imai-Koskivirta-partial-Hasse}, one has
\begin{equation}
    \pi_*(\Vcal_{\flag}(\lambda)) = \Vcal_I(\lambda)
\end{equation}
where $\Vcal_I(\lambda)$ is the vector bundle on $\GZip^\mu$ attached to the algebraic $P$-representation
\begin{equation}
    V_I(\lambda)\colonequals \ind_{B}^P(\lambda)
\end{equation}
induced by $\lambda$. Concretely, $V_I(\lambda)$ is the space of regular maps $f\colon P\to \AA^1$ satisfying $f(xb)=\lambda(b)^{-1}f(x)$ for all $b\in B$, $x\in P$. When $\lambda\in X^*(L)$, the line bundle $\Vcal_{\flag}(\lambda)$ coincides by construction with the pullback $\pi^*(\Lcal(\lambda))$. Using \eqref{H0-quot-eq} and the formula $\pi_*(\Vcal_{\flag}(\lambda)) = \Vcal_I(\lambda)$, the space of global section of $\Vcal_{\flag}(\lambda)$ identifies with
\begin{equation}
    H^0(\GF^\mu,\Vcal_{\flag}(\lambda)) = H^0(\GZip^\mu,\Vcal_{I}(\lambda)) =  S_\lambda
\end{equation}
where $S_\lambda$ was defined in \eqref{Spr-lam}. Using the notation introduced in the previous papers \cite{Koskivirta-automforms-GZip} and \cite{Imai-Koskivirta-vector-bundles}, we write
\begin{equation}\label{Rzip-GF}
    R_{\zip}\colonequals \bigoplus_{\lambda\in X^*(T)} H^0(\GZip^\mu,\Vcal_{I}(\lambda)) = \bigoplus_{\lambda\in X^*(T)} H^0(\GF^\mu,\Vcal_{\flag}(\lambda)).
\end{equation}
In \loccitn, this ring was called the ring of automorphic forms on $\GZip^\mu$. This terminology stems from the realm of Shimura varieties (for more details, see section \ref{shim-ring-sec} below).

\begin{definition}\label{cox-flag}
We define the Cox ring of $\GF^\mu$ as the ring $R_{\zip}$.
\end{definition}

By the above discussion, $R_{\zip}$ coincides with the ring $S$ defined in \eqref{Sprime-ring}. In particular, we may view $R_{\zip}$ as a subring of $k[G]$. It contains the Cox ring of $\GZip^\mu$ as a subring.

\begin{proposition} \label{properties-Rzip} \ 
\begin{assertionlist}
    \item We have $R_{\zip}=k[G]\cap K_{\zip}$.
    \item $R_{\zip}$ is integrally closed.
    \item The group of units of $R_{\zip}$ coincides with $k[G]^\times$.
\end{assertionlist}
\end{proposition}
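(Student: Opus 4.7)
The plan is to mirror the argument of Lemma \ref{lem-K-Cox} and Proposition \ref{Cox-first}, using the key identification $R_{\zip}=S$ established just before the statement, where $S\subset k[G]$ is the ring of $R_{\mathrm{u}}(E')$-invariant regular functions on $G_k$. Since $R_{\zip}\subset k[G]$ and $k[G]$ is an integral domain, $R_{\zip}$ is an integral domain and its fraction field $K_{\zip}$ embeds naturally into $k(G)$. Moreover, because $\GZip^\mu$ has finite underlying topological space, the codimension one closures of $E$-orbits on $G_k$ yield only finitely many divisors, and so $K_{\zip}$ is finitely generated over $k$ (contained in $k(G)$). This sets up the same framework used for $\Cox(\GZip^\mu)$.

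For (i), one inclusion $R_{\zip}\subset k[G]\cap K_{\zip}$ is tautological. Conversely, take $f\in k[G]\cap K_{\zip}$ and write $f=h_1/h_2$ with $h_1,h_2\in R_{\zip}$. Since $h_1,h_2$ are $R_{\mathrm{u}}(E')$-invariant, $f$ is $R_{\mathrm{u}}(E')$-invariant on the open dense locus $\{h_2\neq 0\}\subset G_k$. Because $f$ is regular on all of $G_k$ and equality of two regular maps on a dense open set extends by separatedness, $f$ is $R_{\mathrm{u}}(E')$-invariant on all of $G_k$. Hence $f\in S=R_{\zip}$. Note that in contrast with the proof of Lemma \ref{lem-K-Cox}, no extra weight check is needed, since the grading of $R_{\zip}$ runs over all of $X^*(T)=X^*(E')$. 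For (ii), if $f\in K_{\zip}$ is integral over $R_{\zip}$, then it is in particular integral over $k[G]$; since $G_k$ is smooth, $k[G]$ is normal, and so $f\in k[G]$. Then $f\in k[G]\cap K_{\zip}=R_{\zip}$ by (i).

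For (iii), the inclusion $R_{\zip}^\times\subset k[G]^\times$ is immediate. Conversely, every $f\in k[G]^\times$ can be written as $f=a\chi$ with $a\in k^\times$ and $\chi\in X^*(G)$, using the isomorphism $\Ecal(G)\simeq X^*(G)$ recalled in \S\ref{sec-pic}. The character $\chi$ is trivial on every unipotent subgroup of $G_k$, hence in particular on both factors of $R_{\mathrm{u}}(E')=R_{\mathrm{u}}(B)\times R_{\mathrm{u}}(Q)$, so $a\chi$ is $R_{\mathrm{u}}(E')$-invariant and lies in $R_{\zip}$; the same applies to its inverse $a^{-1}\chi^{-1}$. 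Therefore $f\in R_{\zip}^\times$, which completes the proof.

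The statements are entirely parallel to Proposition \ref{Cox-first}, and no essential obstacle is expected; the only point requiring attention is the density/continuity step in (i), which rests on the fact that two regular sections of a line bundle on a smooth $k$-variety agreeing on a dense open subset agree everywhere.
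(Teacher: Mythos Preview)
Your proof is correct and follows essentially the same approach as the paper: the identification $R_{\zip}=S=k[G]^{R_{\mathrm{u}}(E')}$, the density argument for (1), and the normality of $k[G]$ for (2) are exactly the steps the paper uses. One small inaccuracy: $R_{\mathrm{u}}(E')$ is not literally $R_{\mathrm{u}}(B)\times R_{\mathrm{u}}(Q)$ (the second coordinate is constrained by the zip relation), but your conclusion is unaffected since every $(x,y)\in R_{\mathrm{u}}(E')$ has both coordinates unipotent, so $\chi(x)=\chi(y)=1$ for $\chi\in X^*(G)$; alternatively, the paper simply quotes Proposition~\ref{Cox-first}(2), noting that $k[G]^\times=\Cox(\GZip^\mu)^\times\subset R_{\zip}^\times$ because $\Cox(\GZip^\mu)\subset R_{\zip}$.
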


\begin{proof}
To prove (1), recall that $S=R_{\zip}$ is the ring of functions $G_k\to \AA^1$ that are invariant under the group $R_{\mathrm{u}}(E')$. Hence $K_{\zip}$ is the field of $R_{\mathrm{u}}(E')$-invariant functions defined on an open subset of $G_k$. Hence, if $f\in R_{\zip}\cap K$, $f$ is $R_{\mathrm{u}}(E')$-invariant on an open subset, and hence everywhere by density. This shows that $R_{\zip}$ coincides with $k[G]\cap K$. The proof of (2) is completely analogous to that of Proposition \ref{Cox-first}. Finally, since $R_{\zip}$ is a subring of $k[G]$, we have $R_{\zip}^{\times}\subset k[G]^\times$. Conversely, we showed that $k[G]^\times$ is the group of units of $\Cox(\GZip^\mu)$, which is a subring of $R_{\zip}$. Hence $R_{\zip}^{\times}=k[G]^\times$.
\end{proof}

Finally, we determine the field of fractions of $R_\zip$, that we denote by $K_{\zip}\colonequals \Frac(R_{\zip})$. For this, we need to recall the flag stratification of $\GF^\mu$. First, one sees immediately that $E'\subset B\times {}^z B$, where $z=\sigma(w_{0,I})w_0$. Therefore, there is a natural projection map
\begin{equation}
    \psi\colon \GF^\mu \to \left[ B \backslash G_k / {}^z B\right].
\end{equation}
The stack $ \left[ B \backslash G_k / {}^z B\right]$ is a finite stack whose points correspond to the $B\times {}^z B$-orbits in $G_k$ (called the Bruhat strata). They correspond bijectively to the elements of $W$ via the map $w\mapsto F_w\colonequals BwBz^{-1}$. The unique open stratum is $F_{w_0}=Bw_0Bz^{-1}$. For $w\in W$, write $\Fcal_w\colonequals [E'\backslash F_w]$ for the corresponding locally closed substack of $\GF^\mu=[E'\backslash G_k]$. This defines a stratification of $\GF^\mu$ called the flag stratification. We also write $\Ucal_{\max}$ for the unique open stratum. The codimension one strata are $\{\Fcal_{w_0 s_\alpha}\}_{\alpha\in \Delta}$. By \cite{Imai-Koskivirta-partial-Hasse}, there exists for each $\alpha\in \Delta$ a section $\Ha_\alpha\in H^0(\GF^\mu,\Vcal_{\flag}(\ha_\alpha))$ for a certain character $\ha_{\alpha}\in X^*(T)$, such that the vanishing locus of $\Ha_{\alpha}$ is the Zariski closure of $\Fcal_{w_0s_\alpha}$.

We define the Cox ring of $\Ucal_{\rm max}$ as follows:
\begin{equation}
    R_{\rm max} \colonequals \bigoplus_{\lambda\in X^*(T)} H^0(\Ucal_{\rm max},\Vcal_{\flag}(\lambda)).
\end{equation}
In view of \eqref{Rzip-GF}, $R_{\zip}$ is contained in $R_{\rm max}$. Furthermore, if $f\in H^0(\Ucal_{\rm max},\Vcal_{\flag}(\lambda))$, then we can multiply $f$ by large powers of the partial Hasse invariants $\Ha_{\alpha}$ to obtain an element of $R_{\zip}$. This proves that $R_{\zip}$ and $R_{\max}$ have the same field of fractions.

\begin{lemma}
$R_{\max}$ identifies with $k[B\cap M]$.
\end{lemma}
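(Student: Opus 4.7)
The plan is to identify $R_{\max}$ with the ring of $R_{\mathrm{u}}(E')$-invariant regular functions on $F_{w_0}$, and then to realize this invariant ring as $k[B \cap M]$ via an explicit slice for the $R_{\mathrm{u}}(E')$-action.

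First, using the semidirect product decomposition $E' = T \ltimes R_{\mathrm{u}}(E')$ (with $T$ embedded via $t \mapsto (t, \varphi(t))$), a regular function $f \colon F_{w_0} \to \AA^1$ represents a section of $\Vcal_{\flag}(\lambda)$ over $\Ucal_{\max}$ if and only if it is $R_{\mathrm{u}}(E')$-invariant and a $T$-weight vector of weight $\lambda$. Summing over $\lambda \in X^*(T)$, the $T$-weight decomposition of the invariant ring yields
\[
R_{\max} = k[F_{w_0}]^{R_{\mathrm{u}}(E')}.
\]

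Next, set $\eta \colonequals \sigma(w_{0,I})^{-1}$, so that $w_0 z^{-1} = \eta$. Using $w_0 B w_0^{-1} = B^+$, we have $B w_0 B = B B^+ w_0$, and hence $F_{w_0} = B B^+ \eta$. The big cell decomposition gives an isomorphism of varieties $\Psi \colon R_{\mathrm{u}}(B) \times T \times R_{\mathrm{u}}(B^+) \xrightarrow{\sim} F_{w_0}$, $(u, t, v) \mapsto u t v \eta$. Recall that a general element of $R_{\mathrm{u}}(E')$ has the form $(u_L u_P, \varphi(u_L) u_Q)$ with $u_L \in R_{\mathrm{u}}(B \cap L)$, $u_P \in R_{\mathrm{u}}(P)$, $u_Q \in R_{\mathrm{u}}(Q)$. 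Since $\sigma(w_{0,I}) \in W_M$, conjugation by $\eta$ preserves $R_{\mathrm{u}}(Q)$ (using that $W_M$ preserves $\Phi^+ \setminus \Phi^+_M$) and sends $R_{\mathrm{u}}(B \cap M)$ onto $R_{\mathrm{u}}(B^+ \cap M)$ (using that $\sigma(w_{0,I})$ exchanges $\Phi^+_M$ and $\Phi^-_M$). A direct computation then yields
\[
(u_L u_P, \varphi(u_L) u_Q) \cdot \Psi(u, t, v) = \Psi\bigl(u_L u_P u,\ t,\ v \cdot u_Q'^{-1} u_+^{-1}\bigr),
\]
where $u_Q' \colonequals \eta u_Q \eta^{-1} \in R_{\mathrm{u}}(Q)$ and $u_+ \colonequals \eta \varphi(u_L) \eta^{-1} \in R_{\mathrm{u}}(B^+ \cap M)$; in particular, the $t$-coordinate is $R_{\mathrm{u}}(E')$-invariant.

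Finally, using the product decomposition $R_{\mathrm{u}}(B^+) = R_{\mathrm{u}}(B^+ \cap M) \cdot R_{\mathrm{u}}(Q)$ (with $R_{\mathrm{u}}(Q)$ normal in $R_{\mathrm{u}}(B^+) \subset Q$), one checks that each point of $F_{w_0}$ has in its $R_{\mathrm{u}}(E')$-orbit a unique representative of the form $\Psi(1, t, v_M)$ with $v_M \in R_{\mathrm{u}}(B^+ \cap M)$. Hence the map $\iota \colon T \times R_{\mathrm{u}}(B^+ \cap M) \hookrightarrow F_{w_0}$, $(t, v_M) \mapsto t v_M \eta$, is a slice of the quotient, and we obtain
\[
R_{\max} \cong k[T \times R_{\mathrm{u}}(B^+ \cap M)] = k[B^+ \cap M] \cong k[B \cap M],
\]
the last isomorphism holding because $B \cap M$ and $B^+ \cap M$ are conjugate Borels of $M$. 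The main technical challenge will be verifying the action formula and the uniqueness of this slice, which amounts to careful bookkeeping with the unipotent decompositions and the $\eta$-conjugation.
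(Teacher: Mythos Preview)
Your proof is correct and follows essentially the same underlying idea as the paper's: both identify $\Ucal_{\max}$ with a quotient stack $[T\backslash (B\cap M)]$ (or a conjugate thereof) and then recover $R_{\max}$ by summing the $T$-weight decomposition. The difference is purely one of packaging. The paper invokes the isomorphism $\Ucal_{\max}\simeq [T\backslash (B\cap M)]$ as a black box from \cite[Lemma 4.2.1(2)]{Imai-Koskivirta-zip-schubert} and then immediately reads off $R_{\max}=\bigoplus_\lambda k[B\cap M]_\lambda = k[B\cap M]$, whereas you reprove this isomorphism from scratch via the big-cell parametrization $F_{w_0}\cong R_{\mathrm{u}}(B)\times T\times R_{\mathrm{u}}(B^+)$ and an explicit slice computation for the free $R_{\mathrm{u}}(E')$-action. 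Your version lands on $B^+\cap M$ rather than $B\cap M$, which you correctly patch by noting they are conjugate Borels of $M$. The advantage of your route is self-containment; the paper's route is shorter but defers the actual geometry to the cited lemma (whose proof is, in spirit, exactly your slice argument).

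One small remark: to conclude that restriction to the slice gives a ring isomorphism (not just a bijection on points), you should make explicit that the action map $R_{\mathrm{u}}(E')\times\bigl(T\times R_{\mathrm{u}}(B^+\cap M)\bigr)\to F_{w_0}$ is an isomorphism of \emph{varieties}. This follows from your formulas together with the variety-level product decompositions $R_{\mathrm{u}}(B)=R_{\mathrm{u}}(B\cap L)\cdot R_{\mathrm{u}}(P)$ and $R_{\mathrm{u}}(B^+)=R_{\mathrm{u}}(B^+\cap M)\cdot R_{\mathrm{u}}(Q)$, but it is worth stating, since a set-theoretic slice for a unipotent action does not by itself guarantee that the invariant ring equals the coordinate ring of the slice.
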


\begin{proof}
By \cite[Lemma 4.2.1 (2)]{Imai-Koskivirta-zip-schubert}, there is an isomorphism $\Ucal_{\max}\simeq [T\backslash (B\cap M)]$ where $T$ acts on $B\cap M$ by $t\cdot b = \varphi(t) b (\sigma(w_{0,I})t^{-1} \sigma(w_{0,I}))$. This implies that $R_{\max}$ can be identified with a subring of $k[B\cap M]$. Furthermore, since the torus $T$ acts on $k[B\cap M]$, this ring decomposes as a direct sum of weight spaces $k[B\cap M]_{\lambda}$. By the isomorphism $\Ucal_{\max}\simeq [T\backslash (B\cap M)]$, the weight space $k[B\cap M]_{\lambda}$ is simply $H^0(\Ucal_{\rm max},\Vcal_{\flag}(\lambda))$. This proves the result.
\end{proof}

We deduce the following:

\begin{proposition}\label{prop-Kzip}
The field $K_{\zip}$ identifies with $k(B\cap M)$.
\end{proposition}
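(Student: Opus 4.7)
The plan is to combine the two observations established immediately before the statement. First, the discussion preceding the lemma shows that every section $f \in H^0(\Ucal_{\max}, \Vcal_{\flag}(\lambda))$ becomes an element of $R_{\zip}$ after multiplication by a sufficiently high power of the partial Hasse invariants $\Ha_\alpha$, $\alpha \in \Delta$, because these sections vanish exactly on the codimension-one closed flag strata $\Fcal_{w_0 s_\alpha}$ whose union is the complement of $\Ucal_{\max}$. Hence the inclusion $R_{\zip} \subset R_{\max}$ is localization-by-Hasse-invariants, giving $\Frac(R_{\zip}) = \Frac(R_{\max})$.

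Second, the preceding lemma provides an identification $R_{\max} \simeq k[B\cap M]$, obtained from the isomorphism $\Ucal_{\max} \simeq [T\backslash (B\cap M)]$ of \cite[Lemma 4.2.1(2)]{Imai-Koskivirta-zip-schubert} by decomposing $k[B\cap M]$ into $T$-weight spaces. Putting the two facts together gives
\begin{equation}
K_{\zip} \;=\; \Frac(R_{\zip}) \;=\; \Frac(R_{\max}) \;=\; \Frac(k[B\cap M]) \;=\; k(B\cap M),
\end{equation}
which is the desired identification.

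There is essentially no obstacle to overcome at this stage: all the substantial content sits in the preceding lemma (extracting $R_{\max} = k[B\cap M]$ from the chart $\Ucal_{\max} \simeq [T\backslash (B\cap M)]$) and in the partial Hasse invariant construction of \cite{Imai-Koskivirta-partial-Hasse}. The only mild point worth checking explicitly is that $B \cap M$ is indeed irreducible (so that $k(B\cap M)$ makes sense as a field), which follows because $B\cap M$ is a Borel subgroup of the connected reductive group $M$, hence connected and smooth.
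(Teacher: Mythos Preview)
Your proposal is correct and follows exactly the paper's approach: the paper derives the proposition directly from the two facts you cite, namely that $R_{\zip}$ and $R_{\max}$ share the same fraction field (via multiplication by partial Hasse invariants) and that $R_{\max}\simeq k[B\cap M]$ by the preceding lemma. Your added remark on the irreducibility of $B\cap M$ is a welcome clarification but is not needed beyond what the paper implicitly assumes.
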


\subsection{Ring of automorphic forms on Shimura varieties}\label{shim-ring-sec}

Let $S_K$ be the mod $p$ special fiber of a Hodge-type Shimura variety with good reduction, as in section \ref{shim-sec}. Let $\zeta\colon S_K\to \GZip^\mu$ be the smooth, surjective morphism explained in \eqref{zeta-eq}. The vector bundle $\zeta^*\Vcal_I(\lambda)$ (denoted again by $\Vcal_I(\lambda)$) is called an automorphic vector bundle on $S_K$. The space $H^0(S_K,\Vcal_I(\lambda))$ is called the space of mod $p$ automorphic forms of weight $\lambda$ and level $K$. There exists also a flag space $\Flag(S_K)$ attached to $S_K$, defined in \cite{Goldring-Koskivirta-Strata-Hasse}. On the special fiber, it can be viewed as the fiber product
$$\xymatrix@1@M=5pt{
\Flag(S_K)\ar[r]^-{\zeta_{\flag}} \ar[d]_{\pi_K} & \GF^\mu \ar[d]^{\pi} \\
S_K \ar[r]_-{\zeta} & \GZip^\mu.
}$$
For each $\lambda\in X^*(T)$, we write again $\Vcal_{\flag}(\lambda)$ for the line bundle on $\Flag(S_K)$ given by pullback via $\zeta_{\flag}$. Since $\zeta$ is smooth, the formula
\begin{equation}
    \pi_{K,*}(\Vcal_{\flag}(\lambda))=\Vcal_I(\lambda)
\end{equation}
also holds on the level of $S_K$. In particular, we have an identification
\begin{equation}\label{ident-flag}
H^0(S_K,\Vcal_I(\lambda))=H^0(\Flag(S_K),\Vcal_{\flag}(\lambda)).
\end{equation}
Similarly to Definition \ref{cox-flag}, we may define a $k$-algebra $R_K$ attached to $S_K$ by
\begin{equation}
    R_K \colonequals \bigoplus_{\lambda\in X^*(T)} H^0(S_K,\Vcal_I(\lambda)).
\end{equation}
Again, via the identification \eqref{ident-flag}, we may interpret $R_K$ as the "automorphic Cox ring" of the scheme $\Flag(S_K)$ (i.e. the subring of the Cox ring of $\Flag(S_K)$ given by the family of line bundles $\{\Vcal_{\flag}(\lambda)\}_{\lambda\in X^*(T)}$). By pullback via $\zeta$, we obtain a natural injective map $\zeta^*\colon R_{\zip}\to R_K$ of graded $k$-algebras.

\subsection{Conjectures}

The following conjecture was first proposed in \cite[Conjecture 5.1.4]{Koskivirta-automforms-GZip}.

\begin{conjecture}\label{Rzip-conj}
The ring $R_{\zip}$ is a finitely generated $k$-algebra. In other words, the stack $\GF^\mu$ is a Mori dream space.
\end{conjecture}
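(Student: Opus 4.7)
The plan is to realize $R_{\zip}$ as a ring of unipotent invariants on $G$ and invoke classical theorems of Grosshans and Hadziev. Using the isomorphism $\GF^\mu = [E'\backslash G]$ and grouping the weight spaces $S_\lambda$ by the $T$-weight $\lambda$, one identifies
\[
R_{\zip} = k[G]^{R_{\mathrm{u}}(E')},
\]
with the $X^*(T)$-grading recording the weights of the residual torus $T\simeq E'/R_{\mathrm{u}}(E')$. A direct computation, writing any $(x,y)\in E'$ as $x = tu \in B$ and $y = \varphi(t)v \in Q$ with $u \in R_{\mathrm{u}}(B)$ and $v \in R_{\mathrm{u}}(Q)$, shows that $E' = T \ltimes (R_{\mathrm{u}}(B)\times R_{\mathrm{u}}(Q))$, where $T$ is embedded via $t\mapsto(t,\varphi(t))$. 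In particular $R_{\mathrm{u}}(E') = R_{\mathrm{u}}(B)\times R_{\mathrm{u}}(Q)$ acts on $G$ by $(u_1,u_2)\cdot g = u_1 g u_2^{-1}$.

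Next I factor the invariants in stages. Taking right $R_{\mathrm{u}}(Q)$-invariants first, a theorem of Grosshans applied to the unipotent radical of the parabolic subgroup $Q$ shows that
\[
A\colonequals k[G]^{R_{\mathrm{u}}(Q)} = k[G/R_{\mathrm{u}}(Q)]
\]
is a finitely generated $k$-algebra. Then $X \colonequals \spec(A)$ is an affine variety of finite type carrying a residual left $G$-action (coming from left multiplication on $G$, which commutes with the right $R_{\mathrm{u}}(Q)$-action), and $R_{\zip} = A^{R_{\mathrm{u}}(B)} = k[X]^{R_{\mathrm{u}}(B)}$. Since $R_{\mathrm{u}}(B)\subset G$ is a maximal unipotent subgroup of the reductive group $G$, Hadziev's theorem for affine $G$-varieties of finite type then gives that $R_{\zip}$ is a finitely generated $k$-algebra.

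I expect the main obstacle to lie in justifying these invariant-theoretic inputs in characteristic $p$. Grosshans' result for parabolic unipotent radicals is characteristic-free. Hadziev's theorem reduces via the transfer isomorphism $k[X]^{R_{\mathrm{u}}(B)} \cong (k[X] \otimes k[G/R_{\mathrm{u}}(B)])^{G}$ to finite generation of $G$-invariants of a finitely generated algebra, which holds in positive characteristic by Haboush's theorem on geometric reductivity. A slicker alternative is to apply the transfer principle only once, viewing $G$ as a $(G\times G)$-variety under left-right multiplication and $R_{\mathrm{u}}(B)\times R_{\mathrm{u}}(Q) \subset G\times G$ as an observable unipotent subgroup; the required finite generation of $k[(G\times G)/(R_{\mathrm{u}}(B)\times R_{\mathrm{u}}(Q))] = k[G/R_{\mathrm{u}}(B)] \otimes k[G/R_{\mathrm{u}}(Q)]$ follows from the same two theorems.
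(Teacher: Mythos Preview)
Your identification $R_{\mathrm{u}}(E') = R_{\mathrm{u}}(B)\times R_{\mathrm{u}}(Q)$ is incorrect, and this is where the whole argument breaks down. Write $x\in R_{\mathrm{u}}(B)$ as $x=u_L u_P$ with $u_L\in R_{\mathrm{u}}(B)\cap L$ and $u_P\in R_{\mathrm{u}}(P)$. Then $\theta^P_L(x)=u_L$, and the zip condition forces $\theta^Q_M(y)=\varphi(u_L)\in M$, so $y=\varphi(u_L)\cdot v$ with $v\in R_{\mathrm{u}}(Q)$. Unless $P=B$ (so that $u_L=1$ always), the element $\varphi(u_L)$ is a nontrivial element of $M$ and does \emph{not} lie in $R_{\mathrm{u}}(Q)$. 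Concretely, for $G=\GL_3$ with $P$ of type $(2,1)$ one finds pairs like
\[
\left(\begin{pmatrix}1&0&0\\ b&1&0\\ *&*&1\end{pmatrix},\ \begin{pmatrix}1&0&*\\ b^{p}&1&*\\ 0&0&1\end{pmatrix}\right)\in R_{\mathrm{u}}(E'),
\]
whose second component is not in $R_{\mathrm{u}}(Q)$. Thus $R_{\mathrm{u}}(E')$ is only an extension $1\to \{1\}\times R_{\mathrm{u}}(Q)\to R_{\mathrm{u}}(E')\to R_{\mathrm{u}}(B)\to 1$, with the section $x\mapsto (x,\varphi(\theta^P_L(x)))$; the resulting semidirect product is \emph{not} the naive product inside $G\times G$, and the action of $R_{\mathrm{u}}(B)$ on $G/R_{\mathrm{u}}(Q)$ is the Frobenius-twisted action $gR_{\mathrm{u}}(Q)\mapsto xg\,\varphi(\theta^P_L(x))^{-1}R_{\mathrm{u}}(Q)$, not left translation.

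This Frobenius twist is exactly the obstruction. After taking right $R_{\mathrm{u}}(Q)$-invariants (which is fine by Grosshans), the residual action factors through the embedding $R_{\mathrm{u}}(B)\hookrightarrow G\times M$, $x\mapsto (x,\varphi(\theta^P_L(x)))$. This copy of $R_{\mathrm{u}}(B)$ is not the unipotent radical of a parabolic of $G\times M$, nor a maximal unipotent, so neither Grosshans' theorem nor Hadziev's transfer principle applies to it. Proving that such Frobenius-twisted unipotent subgroups are Grosshans is precisely the content of the conjecture, and the paper does not claim a general proof: it records this as Conjecture~\ref{Rzip-conj}, proves only the extremal cases $P=B$ and $P=G$ (Proposition~\ref{PBPG}), reduces along central quotients and $\FF_p$-products, and verifies it by hand for $\Sp_4$ and rank-$3$ unitary groups. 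Your proposed argument, if valid, would settle the conjecture in full; the gap is real.
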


We verify by hand a few cases of this conjecture in section \ref{sec-exa}. For a Shimura variety $S_K$, we do not expect the ring $R_K$ to be finitely generated over $k$. We next present another conjecture relating the rings $R_{\zip}$ and $R_K$. Roughly speaking, it states that the graded algebras $R_{\zip}$ and $R_K$ share the same support (up to multiple). We denote the support of $R_{\zip}$ and $R_K$ as follows:
\begin{align*}
C_{\zip} &\colonequals \{ \lambda\in X^*(T) \ | \ H^0(\GZip^\mu,\Vcal_I(\lambda)) \neq 0\} \\
C_{K} &\colonequals \{ \lambda\in X^*(T) \ | \ H^0(S_K,\Vcal_I(\lambda)) \neq 0\}.   
\end{align*}
Using the terminology of previous articles, we call $C_{\zip}$ the zip cone of $(G,\mu)$. It is an additive submonoid of $X^*(T)$. We have inclusions 
\begin{equation}
    C_{\zip}\subset C_K\subset X^*_{+,I}(T)
\end{equation} 
where $X^*_{+,I}(T)$ denotes the set of $I$-dominant characters. Since $R_{\zip}\subset R_K$, the first inclusion is obvious. Moreover, when $\lambda$ is not $I$-dominant, the induced representation $V_I(\lambda)=\ind_B^P(\lambda)$ is zero, and hence so is the vector bundle $\Vcal_I(\lambda)$. This shows the second inclusion. W. Goldring and the author conjectured in \cite[Conjecture 2.1.6]{Goldring-Koskivirta-global-sections-compositio} the following link between the stack of $G$-zips and the Shimura variety $S_K$:

\begin{conjecture}\label{conj-cone-SK}
We have $C_{\zip,\QQ_{\geq 0}} = C_{K,\QQ_{\geq 0}}$.
\end{conjecture}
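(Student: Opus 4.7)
The inclusion $C_{\zip,\QQ_{\geq 0}}\subseteq C_{K,\QQ_{\geq 0}}$ is immediate, since pullback along the smooth surjective morphism $\zeta\colon S_K\to\GZip^\mu$ gives injections $H^0(\GZip^\mu,\Vcal_I(\lambda))\hookrightarrow H^0(S_K,\Vcal_I(\lambda))$ for every $\lambda$. All the content lies in the reverse inclusion. My plan is as follows: given $\lambda\in C_K$ witnessed by $0\neq f\in H^0(S_K,\Vcal_I(\lambda))$, produce some integer $N\geq 1$ and a nonzero element of $H^0(\GZip^\mu,\Vcal_I(N\lambda))$. Working rationally is essential, because one is free to absorb the order of the finite group scheme $L_\varphi$ into $N$, using Proposition \ref{prop-line}.

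The first reduction is to pass to the flag side. Via the identification $\pi_{K,*}\Vcal_{\flag}(\lambda)=\Vcal_I(\lambda)$, the section $f$ lifts to $\tilde f\in H^0(\Flag(S_K),\Vcal_{\flag}(\lambda))$, whose divisor is now genuinely that of a line bundle. The codimension-one flag strata on $\GF^\mu$ carry partial Hasse invariants $\Ha_\alpha$ (\cite{Imai-Koskivirta-partial-Hasse}); pulling these back to $\Flag(S_K)$ via $\zeta_{\flag}$ and multiplying $\tilde f$ by suitable products $\prod_{\alpha\in\Delta}\zeta_{\flag}^*(\Ha_\alpha)^{n_\alpha}$ should shift $\lambda$ inside the zip cone while leaving the nonvanishing locus controlled, reducing to the case where $\tilde f$ is nonzero on the preimage of the open flag stratum $\Ucal_{\max}\subset\GF^\mu$.

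Second, exploit the $\mu$-ordinary stratum: since $\Ucal_\mu\simeq BL_\varphi$, replacing $\lambda$ by $N\lambda$ with $N$ a multiple of $\#L_\varphi$ trivializes $L_\varphi$ and gives $H^0(\Ucal_\mu,\Vcal_I(N\lambda))=V_I(N\lambda)$. The nonvanishing of $\tilde f$ on the $\mu$-ordinary locus of $\Flag(S_K)$ should then propagate to a matching class over $\GF^\mu$ itself, using that the fibers of $\zeta$ over $\Ucal_\mu$ are irreducible and that the $E$-action on $U_\mu$ has stabilizer $L_\varphi$. The remaining divisorial obstructions along codimension-one EO strata should be analyzable by the sign formula of Lemma \ref{sgn-prop}, combined with the fact that the linear forms $\lambda\mapsto\langle\lambda,\delta_\alpha\rangle$ form a basis of the dual of $(X^*(L)/X^*(G))_\QQ$ (Proposition \ref{prop-lin-indep}), so that the cone cut out on $\GZip^\mu$ is determined by finitely many numerical conditions.

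The hard part, and the principal obstacle to a full proof, is the final descent step: sections on $S_K$ are genuinely more abundant than on $\GZip^\mu$ (indeed $\dim_k H^0(\GZip^\mu,\Vcal_I(\lambda))\leq\dim_k V_I(\lambda)$ by Proposition \ref{prop-H0-dim}, while $\dim_k H^0(S_K,\Vcal_I(\lambda))$ typically grows polynomially in $\lambda$), and one must rule out that an effective weight $\lambda$ on $S_K$ fails the inequalities $\langle\lambda,\delta_\alpha\rangle\geq 0$ cutting out $C_{\zip,\QQ_{\geq 0}}$. The natural tool is an effective nonvanishing/vanishing theorem for $\Vcal_I(\lambda)$ along EO strata — i.e.\ a converse to Lemma \ref{sgn-prop} on the Shimura side — obtained by combining a Koecher-type extension across the boundary of a toroidal compactification with a systematic use of the Cartier operator to control the Hodge filtration in characteristic $p$. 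This reduction to a purely cone-theoretic statement is, to my knowledge, open in general, and constitutes the central unresolved difficulty.
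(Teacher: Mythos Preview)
The statement you are attempting to prove is a \emph{conjecture} in the paper, not a theorem. The paper does not prove it; it records it as Conjecture~\ref{conj-cone-SK} and then lists in Theorem~\ref{thm-cones-6cases} the special cases in which it has been verified (Hilbert--Blumenthal, Picard surfaces at split primes, Siegel threefolds, etc.), each by separate and rather involved arguments in the cited references. Your own proposal is consistent with this: you correctly observe that the hard direction is open in general and that your sketch does not close the gap. So there is no discrepancy to diagnose between your argument and the paper's --- neither gives a proof.

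That said, your sketch contains a conceptual conflation worth flagging. You invoke Proposition~\ref{prop-line}, Lemma~\ref{sgn-prop}, and the inequalities $\langle\lambda,\delta_\alpha\rangle\geq 0$ as if they described $C_{\zip,\QQ_{\geq 0}}\subset X^*(T)_\QQ$. They do not: those results concern $\Eff(\GZip^\mu)\subset X^*(L)$, i.e.\ line bundles on $\GZip^\mu$ attached to characters of $L$. The zip cone $C_{\zip}$ lives in $X^*(T)$ and records nonvanishing of the \emph{vector} bundles $\Vcal_I(\lambda)$ (equivalently, of the line bundles $\Vcal_{\flag}(\lambda)$ on $\GF^\mu$). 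Its description is far more intricate --- see the explicit formula~\eqref{equ-propH0} for the inert unitary case, or the general Theorem~\ref{thm-main-H0} --- and even the weakest expected statement, that $C_{\zip,\QQ_{\geq 0}}$ is polyhedral (statement~(A) in \S5.7), is only known in the cases listed there. So the step where you propose to reduce the conjecture to checking finitely many linear inequalities $\langle\lambda,\delta_\alpha\rangle\geq 0$ is not available: those are the wrong inequalities, and the correct description of $C_{\zip,\QQ_{\geq 0}}$ is itself part of what is unknown.
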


Concretely, this conjecture says that if $f\in H^0(S_K,\Vcal_I(\lambda))$ is a nonzero automorphic form of weight $\lambda$, then there exists a nonzero automorphic form $f_{\zip}$ on $\GZip^\mu$ of weight $m\lambda$ for some $m\geq 1$. We list all known cases in the theorem below.

\begin{theorem}\label{thm-cones-6cases}
Conjecture \ref{conj-cone-SK} holds true in all cases below.
\begin{assertionlist}
    \item $S_K$ is a Hilbert--Blumenthal variety.
    \item $S_K$ is a Picard surface at a split prime.
    \item $S_K$ is a Siegel threefold.
    \item $S_K$ is a Siegel-type Shimura variety of rank $3$ and $p\geq 5$.
    \item $S_K$ is a unitary Shimura varieties of signature $(r,s)$ with $r+s\leq 4$. In the case $(r,s)=(2,2)$, assume that $p$ is split.
\item $S_K$ is a Shimura variety of type $A_1$.
\end{assertionlist}
\end{theorem}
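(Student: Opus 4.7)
The inclusion $C_{\zip,\QQ_{\geq 0}} \subset C_{K,\QQ_{\geq 0}}$ is automatic from the injective graded map $\zeta^*\colon R_{\zip}\hookrightarrow R_K$. The substance of the theorem is the reverse inclusion, which I would treat uniformly by explicitly pinning down both cones and matching them case by case; no uniform argument is expected at the current level of generality.

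For the description of $C_{\zip,\QQ_{\geq 0}}$ my plan is purely group-theoretic. By Proposition \ref{prop-line} the line-bundle part of the zip cone is cut out by the inequalities $\langle \lambda,\delta_\alpha\rangle \geq 0$ for $\alpha\in \Delta^P$, and by Theorem \ref{thm-main-H0} the full zip cone is obtained from these together with $I$-dominance and the combinatorial constraints that the Brylinski--Kostant filtration imposes on each induced representation $V_I(\lambda)$. In the six cases listed the root datum has rank at most three (type $A_1^n$ in (i) and (vi), types $A_2$ or $A_3$ for (ii) and (v), types $C_2$ and $C_3$ for (iii) and (iv)), so one can write down an explicit finite list of supporting rays of $C_{\zip,\QQ_{\geq 0}}$ by hand. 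For the upper bound on $C_{K,\QQ_{\geq 0}}$, the geometric ingredients are: (a) partial Hasse invariants on $S_K$ (or on $\Flag(S_K)$) attached to the codimension-one Ekedahl--Oort and flag strata, which turn non-vanishing of $H^0(S_K,\Vcal_I(\lambda))$ into half-space conditions on $\lambda$; and (b) a Koecher-type argument controlling the boundary of the minimal compactification. This strategy is carried out case by case in \cite{Goldring-Koskivirta-global-sections-compositio, Goldring-Koskivirta-GS-cone, Goldring-Koskivirta-Strata-Hasse, Goldring-Koskivirta-divisibility, Goldring-Nicole-mu-Hasse}; one then verifies that in each case the two descriptions agree.

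The main obstacle is the upper bound on $C_{K,\QQ_{\geq 0}}$. While the group-theoretic description of the zip cone is available in any rank, ruling out mod $p$ automorphic forms whose weight escapes the zip cone requires effective positivity and vanishing results on $S_K$ which are presently accessible only in low rank or highly split situations. The hypothesis $p\geq 5$ in (iv), and the assumption that $p$ splits in case $(r,s)=(2,2)$ of (v), plausibly reflect the need for the partial Hasse invariants to have weights in sufficiently general position, so that no extraneous cancellations occur when translating effectivity on $S_K$ into half-space conditions on $\lambda$; removing these restrictions would require genuinely new geometric input.
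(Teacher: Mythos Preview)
Your proposal is broadly in line with the paper, but note that the paper does not actually give a proof of this theorem: it is stated as a survey result, and the paragraph immediately following the statement simply records precise citations --- cases (1)--(3) from \cite[Theorem 4.2.3, Theorem 5.1.1]{Goldring-Koskivirta-global-sections-compositio}, cases (4)--(5) from \cite[Theorems 3.4.4, 4.2.7, 4.2.8, 4.3.7]{Goldring-Koskivirta-divisibility}, and case (6) from \cite[Theorem 5.12.1]{Koskivirta-Hilbert-strata}. So there is no ``paper's own proof'' to compare against beyond this attribution.

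Your sketch of the underlying strategy is accurate in spirit: the zip cone is computed explicitly from the root datum and Theorem~\ref{thm-main-H0}, and the hard direction $C_{K,\QQ_{\geq 0}}\subset C_{\zip,\QQ_{\geq 0}}$ is obtained geometrically. Two minor adjustments are worth making. First, your list of citations does not match the paper's: \cite{Koskivirta-Hilbert-strata} is the reference for the $A_1$ case and should be included, whereas \cite{Goldring-Nicole-mu-Hasse} and \cite{Goldring-Koskivirta-Strata-Hasse} construct Hasse invariants but do not themselves establish the cone equality. Second, your description of the geometric upper bound is slightly misweighted: the dominant mechanism in \cite{Goldring-Koskivirta-global-sections-compositio, Goldring-Koskivirta-divisibility} is not a Koecher-type boundary argument but rather restriction of a nonzero section to successively lower-dimensional flag strata closures, combined with divisibility by the partial (stratum) Hasse invariants there; this is what converts nonvanishing into the desired linear inequalities on $\lambda$. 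The restrictions $p\geq 5$ in (4) and $p$ split in the $(2,2)$ case of (5) arise from the specific divisibility/vanishing computations on low-dimensional strata in those references, rather than from a general-position principle for the Hasse weights.
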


In case (6), we say that a Shimura variety of type $A_1$ if $G^{\rm ad}_k$ is a product of copies of $\PGL_{2,k}$ (\cite[\S2.5]{Koskivirta-Hilbert-strata}). In particular, case (6) is a generalization of the case of Hilbert--Blumenthal Shimura varieties (case (1)). W. Goldring and the author proved cases (1), (2) and (3) in \cite[Theorem 4.2.3, Theorem 5.1.1]{Goldring-Koskivirta-global-sections-compositio} and cases (4), (5) in \cite[Theorems 3.4.4, 4.2.7, 4.2.8, 4.3.7]{Goldring-Koskivirta-divisibility}. Finally, case (6) was carried out by the author in \cite[Theorem 5.12.1]{Koskivirta-Hilbert-strata}.

\subsection{Griffiths--Schmid conditions}

The Griffiths--Schmid cone $C_{\GS}$ frequently appears in the theory of automorphic forms (especially in characteristic zero). It is defined as follows:
\begin{equation}
C_{\GS}=\left\{ \lambda\in X^{*}(T) \ \relmiddle| \ 
\parbox{6cm}{
$\langle \lambda, \alpha^\vee \rangle \geq 0 \ \textrm{ for }\alpha\in I, \\
\langle \lambda, \alpha^\vee \rangle \leq 0 \ \textrm{ for }\alpha\in \Phi^+ \setminus \Phi_{L}^{+}$}
\right\}.
\end{equation}
Note that $\lambda$ lies in $C_{\GS}$ if and only if $w_0 w_{0,I}\lambda$ is dominant. W. Goldring and the author proved in \cite[Theorem 2.6.4]{Goldring-Koskivirta-GS-cone} that the weight of any nonzero classical automorphic form in characteristic zero always lies in $C_{\GS}$ (this result was already known to experts, but was not explicitly stated in the literature. We provided a novel, purely characteristic $p$ method). Conversely, if $\lambda\in C_{\GS}$, then it seems to be known to experts that there exists a nonzero automorphic form in characteristic zero of weight $m\lambda$, for some $m>0$.

This cone also plays a role in the theory of $G$-zips. First, it was proved in \cite[Theorem 6.4.3]{Imai-Koskivirta-zip-schubert} that the inclusion $C_{\GS,\QQ_{\geq 0}}\subset C_{\zip,\QQ_{\geq 0}}$ holds for any cocharacter datum $(G,\mu)$. For simplicity, assume now that $P$ is defined over $\FF_p$. In this case, one has $L_{\varphi}=L(\FF_p)$ (see section \ref{glob-subsec}). One has the following result:

\begin{proposition}[{\cite[Proposition 3.7.5]{Koskivirta-automforms-GZip}}]\label{prop-GS}
Assume that $\lambda\in C_{\GS}$.
\begin{assertionlist}
\item There are identifications $H^0(\GZip^\mu,\Vcal_I(\lambda))=H^0(\Ucal_\mu,\Vcal_I(\lambda))=V_I(\lambda)^{L(\FF_p)}$.
\item Any section $f\in H^0(\Ucal_\mu,\Vcal_I(\lambda))$ extends uniquely to $\GZip^\mu$.
\end{assertionlist}
\end{proposition}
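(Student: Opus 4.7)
The plan proceeds in two steps, with Part (ii) an immediate consequence of Part (i): once the inclusion $H^0(\GZip^\mu, \Vcal_I(\lambda)) \subset H^0(\Ucal_\mu, \Vcal_I(\lambda))$ is an equality, the restriction map is an isomorphism, and uniqueness of extensions follows from the density of $\Ucal_\mu$ in $\GZip^\mu$. For Part (i), the identification $H^0(\Ucal_\mu, \Vcal_I(\lambda)) = V_I(\lambda)^{L(\FF_p)}$ is exactly \eqref{H0-mu-eq} applied to $\rho = V_I(\lambda)$, using that $P$ is defined over $\FF_p$ and therefore $L_\varphi = L(\FF_p)$. Thus the entire content of the proposition reduces to showing that the natural inclusion
$$H^0(\GZip^\mu, \Vcal_I(\lambda)) \hookrightarrow V_I(\lambda)^{L(\FF_p)}$$
afforded by Theorem \ref{thm-main-H0} is an equality whenever $\lambda \in C_{\GS}$.

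My approach is to apply Theorem \ref{thm-main-H0} directly: it suffices to check that for every $\alpha \in \Delta^P$ and every weight class $[\nu]$ appearing in $V_I(\lambda)$, the Brylinski--Kostant subspace $\fil_{\delta_{\alpha}}^{\Xi_{\alpha},\mathbf{a}_{\alpha},\mathbf{r}_{\alpha}} V_{[\nu]}$ coincides with the full space $V_{[\nu]}$. Since $P$ is defined over $\FF_p$, one has $m_\alpha = 1$, $\Xi_\alpha = (-\alpha)$, $\mathbf{a}_\alpha = (-1)$ and $\delta_\alpha = -\alpha^\vee/(p-1)$, so the filtration condition becomes a family of explicit vanishing requirements on the lowering operators $E_{-\alpha}^{(j)}$, indexed by the integers $\langle \chi, \delta_\alpha \rangle$ for weights $\chi$ of $V_I(\lambda)$. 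Equivalently, by the adapted-morphism formalism underlying Theorem \ref{thm-main-H0}, it is enough to verify that for any $f \in V_I(\lambda)^{L(\FF_p)}$ viewed as a section of $\Vcal_I(\lambda)$ over $\Ucal_\mu$, the pullback $\psi_\alpha^* f$ along the map of Proposition \ref{psiadapted} extends across $\{t=0\}$, i.e.\ has no strictly negative powers of $t$ in its Laurent expansion at the origin.

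Carrying this out, the decomposition \eqref{At} combined with $E$-equivariance yields
$$\psi_\alpha^* f\bigl((x,y),t\bigr) = \rho(x)\, \rho\bigl(u_{-\alpha}(-t^{-1})\bigr)\, f\bigl(\alpha^\vee(t)\bigr),$$
and the weight decomposition $f(1) = \sum_\nu f_\nu$ together with $T$-equivariance through the identity $\alpha^\vee(t^N) = (N\delta_\alpha)(t) \cdot \varphi((N\delta_\alpha)(t))^{-1}$ gives $f(\alpha^\vee(t)) = \sum_\nu t^{\langle \nu,\delta_\alpha\rangle} f_\nu$. Expanding $\rho(u_{-\alpha}(-t^{-1})) = \sum_j (-t^{-1})^j E_{-\alpha}^{(j)}$ and regrouping by weight $\chi$ produces an expansion whose weight-$\chi$ component is a sum of terms $(-1)^j t^{\langle \chi, \delta_\alpha \rangle + j(\langle \alpha, \delta_\alpha \rangle - 1)} E_{-\alpha}^{(j)}(f_{\chi + j\alpha})$. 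The main obstacle, and the heart of the proof, is to show that for $\lambda \in C_{\GS}$ no such term carries a strictly negative power of $t$. The decisive inputs I would assemble are: every weight $\nu$ of $V_I(\lambda) = \ind_B^P(\lambda)$ lies in the convex hull of $W_I \cdot \lambda$ modulo $\ZZ \Phi_L$; the combinatorial fact that $w\alpha \in \Phi^+ \setminus \Phi_L^+$ for every $w \in W_I$ and $\alpha \in \Delta^P$, which together with the Griffiths--Schmid inequalities forces $\langle \nu, \delta_\alpha \rangle \geq 0$ on every extremal weight; and the $L(\FF_p)$-invariance of $f$, which imposes the congruences among the $f_\nu$ needed to cancel the residual would-be polar coefficients coming from non-extremal weights. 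This last cancellation is the delicate bookkeeping step that the proof must carry out in full.
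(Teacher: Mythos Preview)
The statement is cited from \cite[Proposition 3.7.5]{Koskivirta-automforms-GZip} and the present paper gives no proof, so there is nothing here to compare against directly. Your strategy of reducing to Theorem~\ref{thm-main-H0} and verifying that the Brylinski--Kostant filtration is full is the natural one and is correct, but you are overcomplicating the endgame in two ways.

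First, you have not used that $R_{\mathrm u}(P)$ acts trivially on $V_I(\lambda)=\ind_B^P(\lambda)$: since $R_{\mathrm u}(P)\subset B$ and $\lambda$ is trivial on unipotents, right translation by $R_{\mathrm u}(P)$ fixes every $f\in V_I(\lambda)$, and normality of $R_{\mathrm u}(P)$ in $P$ then forces the left action to be trivial as well. For $\alpha\in\Delta^P$ one has $U_{-\alpha}\subset R_{\mathrm u}(P)$, hence $E_{-\alpha}^{(j)}=0$ on $V_I(\lambda)$ for every $j\geq 1$. Only the $j=0$ term survives in your expansion, and the filtration condition in Theorem~\ref{thm-main-H0} collapses to the single requirement $\langle\nu,\delta_\alpha\rangle\geq 0$ for each weight $\nu$ occurring in $V_I(\lambda)$.

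Second, the convexity step you already sketched handles \emph{all} weights at once, not only the extremal ones: every weight $\nu$ of $V_I(\lambda)$ lies in the convex hull of $W_I\cdot\lambda$, and you have shown $\langle w\lambda,\alpha^\vee\rangle\leq 0$ for every $w\in W_I$ (using that $w^{-1}\alpha\in\Phi^+\setminus\Phi_L^+$ together with the Griffiths--Schmid inequalities). Since $\langle\,\cdot\,,\delta_\alpha\rangle$ is linear, the inequality $\langle\nu,\delta_\alpha\rangle\geq 0$ follows for every $\nu$ in that convex hull. There are therefore no ``residual would-be polar coefficients'' and no cancellation coming from $L(\FF_p)$-invariance is needed; the final paragraph of your sketch may be deleted, and the proof is already complete at that point.
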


Let $\lambda\in C_{\GS}$ be a character. Since $\chi\colonequals w_0w_{0,I}\lambda$ is dominant, we may consider the $G$-representation $V_{\Delta}(w_0w_{0,I}\lambda)$. There is a map of $L$-representations:
\begin{equation}
  \Pi \colon V_\Delta(w_0w_{0,I}\lambda)|_L\to V_I(\lambda)
\end{equation}
defined in \cite[\S 6.3]{Imai-Koskivirta-partial-Hasse}. We recall its definition. Let $f\in V_{\Delta}(\chi)$, viewed as a function $f \colon G_k\to \AA^1$ satisfying $f(xb)=\chi^{-1}(b)f(x)$ for all $x\in G_k$ and $b\in B$. Then $\Pi(f)$ is the function $L\to \AA^1$ mapping $x\in L$ to $f(x w_{0,I}w_0)$. One sees immediately that $\Pi(f)$ lies in $V_I(\lambda)$ and that the map $\Pi$ is $L$-equivariant. Note that the map $\Pi$ is not completely canonical, since it depends on the choice of representatives $w_0$, $w_{0,I}$ made in section \ref{sec-gp-not}. We have the following result:

\begin{proposition}[{\cite[Proposition 6.3.1]{Imai-Koskivirta-partial-Hasse}}] \label{prop-PiX-isom}
The map $\Pi$ induces an isomorphism of $L$-representations 
\begin{equation}
V_{\Delta}(w_0w_{0,I}\lambda)^{R_{\mathrm{u}}(P)} \to V_I(\lambda).    
\end{equation}
In particular, $V_{\Delta}(w_0w_{0,I}\lambda)|_L$ decomposes as $V_{\Delta}(w_0w_{0,I}\lambda)|_L=V_I(\lambda) \oplus \Ker(\Pi)$.
\end{proposition}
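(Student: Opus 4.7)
The plan is to interpret $\Pi$ geometrically via Borel--Weil. Set $w\colonequals w_{0,I}w_0$ (the longest element of ${}^I W$) and $B_L\colonequals B\cap L$. Identifying $V_\Delta(\chi)=H^0(G_k/B,\Lscr_\chi)$ and $V_I(\lambda)=H^0(L/B_L,\Lscr_{L,\lambda})$, the map $\Pi$ becomes restriction of a section of $\Lscr_\chi$ to the closed $L$-orbit $LwB/B\subset G_k/B$, followed by the identification $LwB/B\cong L/B_L$ given by $lB_L\mapsto lwB$.

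To set up the geometry, I would first use the Bruhat decomposition together with the fact that $w_{0,I}$ permutes $\Phi^+\setminus \Phi_L^+$ to compute $P\cap wBw^{-1}=B_L$. This identifies the unique open $P$-orbit as $O\colonequals PwB/B\cong P/B_L$ and, via $P=R_{\mathrm{u}}(P)\cdot L$, exhibits $O$ as an $L$-equivariant $R_{\mathrm{u}}(P)$-bundle over $L/B_L$ with the closed $L$-orbit $LwB/B$ as a natural section. Next I would compute that $\Lscr_\chi|_O$ is the pullback of $\Lscr_{L,\lambda}$ along $O\to L/B_L$: the isotropy character of $B_L$ on the fiber of $\Lscr_\chi$ at $wB$ is $b\mapsto \chi(w^{-1}bw)$, and the defining equation $\chi=w_0 w_{0,I}\lambda$ together with $w^{-1}=w_0 w_{0,I}$ shows this equals $\lambda$. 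Flat base change combined with $k[R_{\mathrm{u}}(P)]^{R_{\mathrm{u}}(P)}=k$ then yields
\[
 H^0(O,\Lscr_\chi|_O)^{R_{\mathrm{u}}(P)}\;\cong\; H^0(L/B_L,\Lscr_{L,\lambda})\;=\;V_I(\lambda),
\]
and unwinding identifications confirms that the restriction map $V_\Delta(\chi)^{R_{\mathrm{u}}(P)}\to V_I(\lambda)$ coincides with $\Pi$.

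Injectivity of $\Pi$ on $V_\Delta(\chi)^{R_{\mathrm{u}}(P)}$ is then immediate from density of $O$ in $G_k/B$; surjectivity is where I expect the main obstacle. Given $\phi\in V_I(\lambda)$, the formula $f(ulwb)=\chi(b)^{-1}\phi(l)$ defines an $R_{\mathrm{u}}(P)$-invariant section on $O$, and one must show it extends regularly across the codimension-one boundary strata $Pw_{0,I}s_\alpha w_0 B/B$, $\alpha\in\Delta^P$. I would pursue two complementary routes. First, a representation-theoretic dimension count: the lowest weight vector $v_{w_0\chi}\in V_\Delta(\chi)$ is killed by $R_{\mathrm{u}}(B)\supset R_{\mathrm{u}}(P)$ and thus lies in $V_\Delta(\chi)^{R_{\mathrm{u}}(P)}$; since $w_{0,I}(w_0\chi)=\lambda$, the $L$-submodule it generates is cyclic of $L$-highest weight $\lambda$, and using induction in stages $\ind_B^G=\ind_P^G\circ \ind_B^P$ combined with Kempf vanishing (applicable since $\chi$ dominant implies $\chi$ is $L$-dominant) one identifies this submodule with $V_I(\lambda)$ and checks it exhausts $V_\Delta(\chi)^{R_{\mathrm{u}}(P)}$ on dimensional grounds. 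Second, a geometric argument extending $f$ across each stratum using the adapted morphism $\psi_\alpha$ of \eqref{phia} transferred to the $G_k/B$ setting, in the style of the argument for Lemma~\ref{sgn-prop}. Positive characteristic is the delicate point: the hypothesis $\lambda\in C_{\GS}$ (equivalently $\chi$ dominant) is precisely what avoids pathological behavior of the Weyl/dual-Weyl modules involved.

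Once $\Pi\colon V_\Delta(\chi)^{R_{\mathrm{u}}(P)}\xrightarrow{\sim}V_I(\lambda)$ is in hand, the ``in particular'' clause is automatic. The composite
\[
 V_I(\lambda)\;\xrightarrow{\sim}\;V_\Delta(\chi)^{R_{\mathrm{u}}(P)}\;\hookrightarrow\; V_\Delta(\chi)|_L
\]
is an $L$-equivariant section of $\Pi$, splitting the short exact sequence $0\to\Ker(\Pi)\to V_\Delta(\chi)|_L\to V_I(\lambda)\to 0$ and giving the direct sum decomposition $V_\Delta(\chi)|_L = V_I(\lambda)\oplus \Ker(\Pi)$ of $L$-representations.
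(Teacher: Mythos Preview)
The paper does not prove this proposition; it is quoted verbatim from \cite[Proposition~6.3.1]{Imai-Koskivirta-partial-Hasse} with no argument given here, so there is nothing in the present paper to compare your proposal against.

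On its own merits, your geometric setup is sound: the computation $P\cap wBw^{-1}=B_L$ for $w=w_{0,I}w_0$ is correct, the identification of the open $P$-orbit $O=PwB/B\cong P/B_L$ as an $R_{\mathrm u}(P)$-bundle over $L/B_L$ is right, and the isotropy character calculation giving $\lambda$ checks out. Injectivity via density of $O$ is fine, and your derivation of the ``in particular'' clause from the first assertion is correct.

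The genuine gap is surjectivity, and neither of your two routes closes it. In your first route, the $L$-submodule $M=L\cdot v_{w_0\chi}$ is cyclic on a $B_L$-lowest-weight vector of weight $w_{0,I}\lambda$; dualizing, this only says $M$ is a submodule of $\nabla_L(\lambda)=V_I(\lambda)$, which you already have from injectivity. In positive characteristic there is no reason such a cyclic module should exhaust the costandard module, so ``identify this submodule with $V_I(\lambda)$'' begs the question. Induction in stages and Kempf vanishing give $V_\Delta(\chi)=\ind_P^G(V_I(\chi))$, but it is not explained how this yields $\dim V_\Delta(\chi)^{R_{\mathrm u}(P)}=\dim V_I(\lambda)$ (note $\chi\neq\lambda$, and taking $R_{\mathrm u}(P)$-invariants of an induced module is not the evaluation map). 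Your second route is also not a minor transfer: the maps $\psi_\alpha$ of \eqref{phia} are built for $E$-orbits in $G_k$, not for $P$-orbits on $G_k/B$, and the boundary strata here are governed by the Bruhat order rather than the twisted order on ${}^IW$, so the extension argument would need to be reworked from scratch.
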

This implies also that $\Pi$ is surjective. Recall the definition of $\Cox_I(G)$, $\Cox_\Delta(G)$ from \eqref{coxI-def-eq}. Taking direct sums, the map $\Pi$ induces a homomorphism of $k$-algebras:
\begin{equation}\label{cox-Pi}
   \Pi\colon \Cox_{\Delta}(G) = \bigoplus_{\lambda\in X^*(T)} V_{\Delta}(w_0w_{0,I}\lambda) \longrightarrow  \Cox_I(G) = \bigoplus_{\lambda\in X^*(T)} V_{I}(\lambda) 
\end{equation}
Note that this ring homomorphism does not preserve the grading as it sends the $w_0w_{0,I}\lambda$-graded piece into the $\lambda$-graded piece. Furthermore, the morphism $\Pi$ in \eqref{cox-Pi} is no longer surjective, because only characters $\lambda$ such that $w_0w_{0,I}\lambda$ is dominant appear in the image. In other words, the image of the ring homomorphism $\Pi$ is precisely the subalgebra
\begin{equation}
    \Cox_{I}^{\GS}(G) \colonequals \bigoplus_{\lambda\in C_{\GS}} V_{I}(\lambda).
\end{equation}
In particular, since $\Cox_{\Delta}(G)$ is finitely generated, we deduce the following:
\begin{corollary}
The $k$-algebra $ \Cox_{I}^{\GS}(G)$ is finitely generated.
\end{corollary}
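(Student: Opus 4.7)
The plan is essentially a one-line deduction, since the nontrivial work has already been assembled in the excerpt. The excerpt explicitly exhibits $\Pi$ in \eqref{cox-Pi} as a $k$-algebra homomorphism $\Cox_\Delta(G) \to \Cox_I(G)$, and identifies its image as exactly $\Cox_I^{\GS}(G)$ (using Proposition \ref{prop-PiX-isom}). Since the image of a finitely generated $k$-algebra under a $k$-algebra homomorphism is again finitely generated --- if $f_1, \dots, f_n$ generate $\Cox_\Delta(G)$ over $k$, then $\Pi(f_1), \dots, \Pi(f_n)$ generate $\Cox_I^{\GS}(G)$ --- the corollary reduces to proving that $\Cox_\Delta(G)$ itself is a finitely generated $k$-algebra.

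The first step is therefore to record finite generation of $\Cox_\Delta(G) = \bigoplus_{\lambda \in X^*(T)} V_\Delta(\lambda)$. Identifying $V_\Delta(\lambda) = H^0(G/B, \Lcal(\lambda))$ for $\lambda \in X^*_+(T)$ (and noting that $V_\Delta(\lambda) = 0$ otherwise), this is just the classical Cox ring of the flag variety $G/B$. I would invoke one of two standard arguments: either geometrically, $G/B$ is a smooth projective spherical variety (even Fano), hence a Mori dream space in the sense of Theorem \ref{thm-MDS-cox}, so its Cox ring is finitely generated; or via Borel--Weil, there is an identification $\Cox_\Delta(G) \cong k[G/U]$ with $U = R_{\mathrm{u}}(B)$, and since $G/U$ is a quasi-affine variety of finite type over $k$, its coordinate ring is automatically a finitely generated $k$-algebra.

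The second step is then to combine the two ingredients: pick generators $f_1, \dots, f_n$ of $\Cox_\Delta(G)$, and observe that their images generate $\Cox_I^{\GS}(G)$ because $\Pi\colon \Cox_\Delta(G) \twoheadrightarrow \Cox_I^{\GS}(G)$ is surjective as a ring map. There is no real obstacle at this stage; the substantive content of the statement was already absorbed into the construction of $\Pi$ (Proposition \ref{prop-PiX-isom}) and into the classical finite generation of the flag variety's Cox ring. If anything, the only mild point to double-check is that $\Pi$ really is a ring homomorphism --- but this follows immediately from its definition via evaluation at the coset through $w_{0,I} w_0$, since $\Pi(f_1 f_2)(x) = (f_1 f_2)(x w_{0,I} w_0) = \Pi(f_1)(x)\, \Pi(f_2)(x)$.
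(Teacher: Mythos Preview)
Your proposal is correct and matches the paper's approach exactly: the paper deduces the corollary in one line from the surjectivity of $\Pi$ onto $\Cox_I^{\GS}(G)$ together with the finite generation of $\Cox_\Delta(G)$, the latter having been noted earlier as the Cox ring of the Mori dream space $G/B$.

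One small caveat on your alternative justification: it is not true that a quasi-affine variety of finite type automatically has finitely generated coordinate ring (Nagata's counterexamples to Hilbert's 14th problem are of this form). The ring $k[G/U]=k[G]^U$ \emph{is} finitely generated, but this is a genuine theorem (e.g.\ via Grosshans' work on observable subgroups, or because $U$ is a Grosshans subgroup of $G$), not a formal consequence of quasi-affineness. Your first argument via $G/B$ being a Mori dream space is the cleaner route and is what the paper uses.
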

Now, we return to the ring of automorphic forms $R_{\zip}$. Given the above discussion, it is natural to investigate the subalgebra $R_{\GS}\subset R_{\zip}$ defined as follows:
\begin{equation}
    R_{\GS}\colonequals \bigoplus_{\lambda\in C_{\GS}} H^0(\GZip^\mu,\Vcal_I(\lambda)).
\end{equation}
We continue to assume that $P$ is defined over $\FF_p$. By Proposition \ref{prop-GS}, for all $\lambda\in C_{\GS}$ we have $H^0(\GZip^\mu,\Vcal_I(\lambda))=V_I(\lambda)^{L(\FF_p)}$. Hence, we obtain
\begin{equation}
    R_{\GS}=\bigoplus_{\lambda\in C_{\GS}} V_I(\lambda)^{L(\FF_p)} = \left( \bigoplus_{\lambda\in C_{\GS}} V_I(\lambda)\right)^{L(\FF_p)} = (\Cox_{I}^{\GS}(G))^{L(\FF_p)}.
\end{equation}
Since $\Cox_{I}^{\GS}(G)$ is finitely generated and $L(\FF_p)$ is finite, we deduce by Emmy Noether's theorem (\cite{Noether-fin-gen}) the following partial result:
\begin{proposition}\label{RGS-fg-prop}
    The subalgebra $R_{\GS}$ is finitely generated.
\end{proposition}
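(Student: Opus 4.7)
The plan is simply to make rigorous the two-line sketch that precedes the statement, namely to realize $R_{\GS}$ as the ring of invariants of a finite group acting on a finitely generated $k$-algebra, and then to invoke Noether's theorem on finiteness of invariants (valid in arbitrary characteristic for finite groups acting by $k$-algebra automorphisms on a finitely generated commutative $k$-algebra).

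First I would reassemble the identification
\begin{equation*}
R_{\GS} \;=\; \bigoplus_{\lambda\in C_{\GS}} H^0(\GZip^\mu,\Vcal_I(\lambda)) \;=\; \bigoplus_{\lambda\in C_{\GS}} V_I(\lambda)^{L(\FF_p)} \;=\; \bigl(\Cox_I^{\GS}(G)\bigr)^{L(\FF_p)},
\end{equation*}
where the first equality is the definition, the second uses Proposition~\ref{prop-GS}(i) under the standing assumption that $P$ is defined over $\FF_p$ (so that $L_\varphi=L(\FF_p)$), and the third uses that the $L$-action on $\Cox_I(G)=\bigoplus_\lambda V_I(\lambda)$ preserves the weight decomposition and therefore preserves the subspace $\Cox_I^{\GS}(G)$. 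One must also observe that the natural product $V_I(\lambda)\otimes V_I(\mu)\to V_I(\lambda+\mu)$ is $L$-equivariant, so $L$ acts on $\Cox_I^{\GS}(G)$ by $k$-algebra automorphisms and passing to $L(\FF_p)$-invariants is a ring-theoretic operation.

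Next I would record that $\Cox_I^{\GS}(G)$ is a finitely generated $k$-algebra; this is precisely the corollary stated just above the proposition, obtained by noting that $\Cox_I^{\GS}(G)$ is the image of the $k$-algebra map $\Pi\colon \Cox_\Delta(G)\to \Cox_I(G)$ of \eqref{cox-Pi}, and that $\Cox_\Delta(G)$, being the Cox ring of the flag variety $G/B$, is finitely generated.

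Finally I would conclude by Noether's theorem: since $L(\FF_p)$ is finite and acts on the finitely generated commutative $k$-algebra $\Cox_I^{\GS}(G)$ by $k$-algebra automorphisms, the invariant subring $\bigl(\Cox_I^{\GS}(G)\bigr)^{L(\FF_p)}=R_{\GS}$ is again finitely generated over $k$. There is no real obstacle in this argument; the only minor point to be careful about is the verification that the $L$-action on $\Cox_I(G)$ is by algebra automorphisms and preserves the $\GS$-graded piece, which follows formally from the $L$-equivariance of the multiplication maps $V_I(\lambda)\otimes V_I(\mu)\to V_I(\lambda+\mu)$ and the fact that $C_{\GS}$ is a submonoid of $X^*(T)$ stable under (the identity action of) $L$.
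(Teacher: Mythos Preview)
Your proposal is correct and follows exactly the same approach as the paper: the argument there is precisely the two-line sketch you have expanded, namely the identification $R_{\GS}=(\Cox_I^{\GS}(G))^{L(\FF_p)}$ via Proposition~\ref{prop-GS}, followed by Noether's theorem applied to the finite group $L(\FF_p)$ acting on the finitely generated algebra $\Cox_I^{\GS}(G)$. Your added remarks on $L$-equivariance of the multiplication maps make explicit a point the paper leaves implicit.
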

More generally, let $\Gamma\subset C_{\zip}$ be a subcone. Then we may define a subalgebra
\begin{equation}
    R_{\Gamma}\colonequals \bigoplus_{\lambda\in \Gamma} H^0(\GZip^\mu,\Vcal_I(\lambda)).
\end{equation}
When $\Gamma\subset C_{\zip}$ and $\Gamma_{\RR_{\geq 0}}$ has a nonempty interior in $C_{\zip,\RR_{\geq 0}}$, then $R_\Gamma$ has the same field of fractions as $R_{\zip}$. Indeed, fix any section $h_0\in H^0(\GZip^\mu,\Vcal_I(\lambda_0))$ for $\lambda_0$ in the interior of $\Gamma_{\RR_{\geq 0}}$. If $f\in H^0(\GZip^\mu,\Vcal_I(\lambda))$ is any section, then $fh_0^m$, lies in $R_{\Gamma}$ for large $m$. This implies the claim. In particular, $R_{\GS}$ and $R_{\zip}$ have the same field of fractions.

\subsection{\texorpdfstring{Direct products over $\FF_p$}{}}

We consider two cocharacter data $(G_1,\mu_1)$ and $(G_2,\mu_2)$ (see section \ref{zip-def}) and define $G=G_1\times G_2$ (product taken over $\FF_p$), endowed with the cocharacter $\mu=(\mu_1,\mu_2)$. Choose Borel pairs $(B_i,T_i)$ for $G_i$ ($i\in \{1,2\}$) as in section \ref{sec-gp-not} and define $B \colonequals B_1\times B_2$, $T \colonequals T_1\times T_2$. Let $R^{(1)}_{\zip}$ and $R_{\zip}^{(2)}$ denote the ring of automorphic forms of $(G_1,\mu_1)$ and $(G_2,\mu_2)$ respectively, and write $R_{\zip}$ for the ring of $(G,\mu)$. Similarly, let also $C_{\zip}^{(1)}$, $C_{\zip}^{(2)}$ and $C_{\zip}$ denote the associated zip cones. As explained in section \ref{sec-zip-flag}, we may identify $R_{\zip}$ with a subring of $k[G]=k[G_1]\otimes_k k[G_2]$. Furthermore, we have $X^*(T)=X^*(T_1)\times X^*(T_2)$.

\begin{proposition}\label{prop-Fp-prod}
Via this identification, one has 
\begin{align*}
    R_{\zip} &= R_{\zip}^{(1)} \otimes_k R_{\zip}^{(2)}, \\
    C_{\zip} & = C_{\zip}^{(1)}\times C_{\zip}^{(2)}.
\end{align*}
\end{proposition}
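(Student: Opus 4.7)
The plan is to reduce the statement to a Künneth-type identity for invariants of a product group acting on a product variety. The starting observation is that the entire zip datum decomposes compatibly: since $P_{\pm}(\mu)$ is defined on each factor by extension of a one-parameter subgroup and $\mu = (\mu_1,\mu_2)$ acts componentwise, one gets $P = P_1 \times P_2$, $Q = Q_1 \times Q_2$, $L = L_1 \times L_2$, $M = M_1 \times M_2$, and the Frobenius $\varphi$ acts componentwise. From the definition \eqref{zipgroup} of the zip group, this immediately yields $E = E_1 \times E_2$, and from \eqref{Eprime-eq} (applied to the Borel $B = B_1 \times B_2$ in $P$) we also get $E' = E'_1 \times E'_2$ and therefore $R_{\mathrm u}(E') = R_{\mathrm u}(E'_1) \times R_{\mathrm u}(E'_2)$, with the action on $G_k = G_{1,k} \times G_{2,k}$ being componentwise on both sides.

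Next I would invoke the interpretation of $R_{\zip}$ established in Section \ref{sec-zip-flag}, namely that $R_{\zip}$ coincides with the subring $S \subset k[G]$ of $R_{\mathrm u}(E')$-invariant functions. The Künneth isomorphism $k[G_{1,k} \times G_{2,k}] = k[G_{1,k}] \otimes_k k[G_{2,k}]$ is an isomorphism of $T \times T = (T_1 \times T_2) \times (T_1 \times T_2)$-modules (via left/right translation). For a product of affine group schemes acting componentwise on a tensor product of coordinate rings one has the standard identity
\begin{equation*}
(A \otimes_k B)^{H_1 \times H_2} \;=\; A^{H_1} \otimes_k B^{H_2}.
\end{equation*}
Applying this to $A = k[G_{1,k}]$, $B = k[G_{2,k}]$ and $H_i = R_{\mathrm u}(E'_i)$, I obtain $R_{\zip} = R_{\zip}^{(1)} \otimes_k R_{\zip}^{(2)}$ as subrings of $k[G]$, which is the first claim.

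For the second claim, I would refine the previous identification to keep track of the $T$-grading by $X^*(T) = X^*(T_1) \times X^*(T_2)$. The weight decomposition of $S$ under $T$ (as defined in \eqref{Spr-lam}) is compatible with the tensor factorization: the $\lambda = (\lambda_1, \lambda_2)$-eigenspace in $S$ equals the tensor product of the $\lambda_i$-eigenspaces in $S^{(i)}$. Via the identification $H^0(\GZip^{\mu_i}, \Vcal_{I_i}(\lambda_i)) = S^{(i)}_{\lambda_i}$ (and similarly for the product), this yields
\begin{equation*}
H^0(\GZip^{\mu},\Vcal_I(\lambda)) \;=\; H^0(\GZip^{\mu_1},\Vcal_{I_1}(\lambda_1)) \otimes_k H^0(\GZip^{\mu_2},\Vcal_{I_2}(\lambda_2)).
\end{equation*}
Since both factors are finite-dimensional $k$-vector spaces, the tensor product is nonzero if and only if each factor is, giving $C_{\zip} = C_{\zip}^{(1)} \times C_{\zip}^{(2)}$.

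The only mild subtlety — which I would treat as the main bookkeeping step rather than a real obstacle — is verifying that the chosen Borel pair $(B,T) = (B_1 \times B_2, T_1 \times T_2)$ for $G$ satisfies the hypotheses of Section \ref{sec-gp-not} (i.e. $B \subset P$, $T \subset L$, defined over $\FF_p$), so that the ring $R_{\zip}$ and the set $I \subset \Delta = \Delta_1 \sqcup \Delta_2$ are genuinely built from the componentwise data. Once this compatibility is in place, both identities follow formally from the Künneth-type invariants formula.
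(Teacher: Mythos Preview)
Your proof is correct. The paper states this proposition without proof, presumably because once one observes that the zip datum, the groups $E$, $E'$, $R_{\mathrm u}(E')$, and the Borel pair all decompose as products (which relies on the product being taken over $\FF_p$ so that Frobenius acts componentwise), the identities follow formally from the K\"unneth decomposition $k[G_{1,k}\times G_{2,k}] = k[G_{1,k}]\otimes_k k[G_{2,k}]$ together with the identification $R_{\zip}=S=k[G]^{R_{\mathrm u}(E')}$. Your write-up makes exactly these points and supplies the expected argument.
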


\subsection{Quotient by a central subgroup}

Let $(G,\mu)$ be a cocharacter datum, and let $\Zcal_\mu=(G,P,Q,L,M)$ be the associated zip datum. Choose a Borel pair $(B,T)$ as in section \ref{sec-gp-not} and fix a subgroup $Z\subset T$ contained in the center of $G$ and defined over $\FF_p$. Let $G'\colonequals G/Z$ and define $\mu'\colonequals \pi\circ \mu$ where $\pi\colon G\to G'$ is the natural projection map. Recall that the formation of the stack of $G$-zips is functorial in the pair $(G,\mu)$ (\cite{Goldring-Koskivirta-zip-flags}), in the following sense: Let $(G_1,\mu_1)$, $(G_2,\mu_2)$ be cocharacter data and $f\colon G_1\to G_2$ a group homomorphism defined over $\FF_p$ such that $\mu_2=f\circ \mu_1$ over $k$. Let $\varpi_i\colon G_i \to \GiZip^{\mu_i}$ (for $i=1,2$) be the natural quotient map. Then $f$ induces a natural morphism of stacks
\begin{equation}
    \widetilde{f}\colon \GoneZip^{\mu_1} \to \GtwoZip^{\mu_2}
\end{equation}
such that the natural diagram commutes, i.e. one has $\widetilde{f} \circ \varpi_1 = \varpi_2 \circ f$. From this, we obtain a natural morphism of stacks $\widetilde{\pi}\colon \GZip^{\mu}\to \GpZip^{\mu'}$. Since $Z$ is a central subgroup, one sees easily that $\widetilde{\pi}$ is a homeomorphism on the underlying topological spaces. Define $T'\colonequals T/Z$ and $B'\colonequals B/Z$. For $\lambda'\in X^*(T')$, write again $\lambda'$ for the character $\lambda'\circ \pi \in X^*(T)$. In this way, view $X^*(T')$ as a subgroup of $X^*(T)$. We have a natural pullback map
\begin{equation}
    \pi^*\colon H^0(\GpZip^{\mu'},\Vcal_{I}(\lambda')) \to H^0(\GZip^{\mu},\Vcal_{I}(\lambda'))
\end{equation}
that is clearly injective. Furthermore, since $Z$ is a central subgroup, it acts naturally on the space $H^0(\GZip^{\mu},\Vcal_{I}(\lambda))$ for any $\lambda\in X^*(T)$. Indeed, let $f\in H^0(\GZip^{\mu},\Vcal_{I}(\lambda))$, viewed as a regular map $f\colon G_k\to \AA^1$, satisfying $f(xgy^{-1})=\lambda(x)f(g)$ for all $(x,y)\in E'$, $g\in G_k$. For $z\in Z$, define the function
$z\cdot f \colon G_k\to \AA^1$ by $g\mapsto f(gz)$. One sees immediately that it lies again in $H^0(\GZip^{\mu},\Vcal_{I}(\lambda))$. Write $H^0(\GZip^{\mu},\Vcal_{I}(\lambda'))^{Z}$ for the subspace of $Z$-invariant elements.

\begin{lemma} \ 
\begin{assertionlist}
    \item Assume that $H^0(\GZip^{\mu},\Vcal_{I}(\lambda))^Z$ is nonzero. Then $\lambda$ is trivial on $Z$, hence lies in the subgroup $X^*(T')\subset X^*(T)$.
    \item If $\lambda'\in X^*(T')$, the map $\pi^*$ induces an identification
\begin{equation}
    H^0(\GpZip^{\mu'},\Vcal_{I}(\lambda')) = H^0(\GZip^{\mu},\Vcal_{I}(\lambda'))^{Z}.
\end{equation}
\end{assertionlist}
\end{lemma}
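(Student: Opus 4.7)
For (i), the plan is to exploit that any $z\in Z(k)$ gives rise to an element of $E'$. Since $Z\subset T$ is central and defined over $\FF_p$, the Frobenius $\varphi$ preserves $Z$; in particular $(z,\varphi(z))\in E$, and since $z\in T\subset B$, this pair even lies in $E'$. Viewing $f\in H^0(\GZip^\mu,\Vcal_I(\lambda'))$ as a regular map $f\colon G_k\to \AA^1$ satisfying $f(xgy^{-1})=\lambda(x)f(g)$ for $(x,y)\in E'$ and applying this to $(z,\varphi(z))$ yields
\begin{equation*}
 f\bigl(zg\varphi(z)^{-1}\bigr)=\lambda(z)f(g).
\end{equation*}
Centrality of $Z$ rewrites the left-hand side as $f(g\cdot z\varphi(z)^{-1})$, and the element $w\colonequals z\varphi(z)^{-1}$ belongs to $Z$ because $Z$ is $\FF_p$-defined. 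If furthermore $f$ is $Z$-invariant, then $f(gw)=f(g)$, and so $\lambda(z)f(g)=f(g)$ for all $g$. Assuming $f\neq 0$, this forces $\lambda(z)=1$ for all $z\in Z$, proving (i). One should also verify that the map $z\mapsto z\varphi(z)^{-1}$ from $Z$ to $Z$ has image with enough points, which is clear since $Z$ is commutative and $\lambda$ is a character, but this is the only place where a little care is needed.

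For (ii), the map $\pi^*$ is injective because $\pi\colon G\twoheadrightarrow G'$ is surjective, and its image lies in the $Z$-invariants: a pullback $f'\circ\pi$ is trivially right-invariant under $\ker(\pi)=Z$, which is exactly the definition of $Z$-invariance here. For surjectivity, start with $f\in H^0(\GZip^\mu,\Vcal_I(\lambda'))^Z$. $Z$-invariance $f(gz)=f(g)$ is precisely the condition that $f$ factor through $G_k\to G'_k$, so there is a unique regular map $f'\colon G'_k\to\AA^1$ with $f=f'\circ\pi$. It then remains to verify that $f'$ satisfies the equivariance required to define an element of $H^0(\GpZip^{\mu'},\Vcal_I(\lambda'))$ for the zip datum of $(G',\mu')$. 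The key observation is that the functoriality morphism $\widetilde\pi$ and the discussion preceding the lemma identify the parabolics and Levis of $(G',\mu')$ as $P/Z$, $L/Z$, $Q/Z$, $M/Z$, so that $\pi$ induces a surjection $E\twoheadrightarrow E_{G'}$ (one lifts $(x',y')\in E_{G'}$ by choosing a lift $x\in B$ of $x'$, any lift $y\in Q$ of $y'$, and then adjusting $y$ by an element of $Z\subset M$ to recover the defining equation $\varphi(\theta^P_L(x))=\theta^Q_M(y)$; this adjustment is possible exactly because centrality of $Z$ and the $\FF_p$-structure make $Z$ lie in both $L$ and $M$). Restricting to $E'\twoheadrightarrow E'_{G'}$, the equivariance of $f'$ then follows directly from that of $f$ together with $\lambda=\lambda'\circ\pi$.

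The only genuine obstacle is the surjectivity $E'\twoheadrightarrow E'_{G'}$ just sketched: one must check both the set-theoretic lift of a pair $(x',y')$ and that the adjustment of $y$ by $Z$ respects the structure of $E'$, i.e. preserves membership in $B\times G$. The latter is automatic since only $y$ is modified, and the former reduces, via the explicit description of $Z\subset L\cap M$, to the $\FF_p$-rationality and centrality of $Z$. Once this is in hand, both (i) and (ii) become essentially formal, and the identification $H^0(\GpZip^{\mu'},\Vcal_I(\lambda'))=H^0(\GZip^\mu,\Vcal_I(\lambda'))^Z$ is established.
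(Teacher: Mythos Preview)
Your proof is correct and follows essentially the same approach as the paper. For (i), both you and the paper use the pair $(z,\varphi(z))\in E'$, centrality of $Z$, and $Z$-invariance to conclude $\lambda(z)f(g)=f(g)$; the paper's proof is just the one-line version of yours. Your remark about the image of $z\mapsto z\varphi(z)^{-1}$ needing ``enough points'' is unnecessary and slightly confused: the identity $\lambda(z)f(g)=f(g)$ holds directly for every $z\in Z$, not merely for $z$ in the image of the Lang map, so nothing about that image is required. For (ii), the paper simply declares the assertion ``immediate''; your argument via descent along $G\to G'$ and surjectivity of $E'\to E'_{G'}$ is a correct and welcome elaboration of what the paper leaves implicit.
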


\begin{proof}
Let $f\colon G_k\to \AA^1$ be a nonzero, $Z$-equivariant function satisfying $f(xgy^{-1})=\lambda(x)f(g)$ for all $(x,y)\in E'$, $g\in G_k$. Since $Z$ is defined over $\FF_p$, we have $f(zg\varphi(z)^{-1})=f(z\varphi(z)^{-1}g)=f(g)=\lambda(z)f(g)$ for all $z\in Z$ and $g\in G_k$. Hence $\lambda$ is trivial on $Z$, which proves (1). The second assertion is immediate.
\end{proof}

Write $R_{\zip}$ and $R'_{\zip}$ for the rings of $(G,\mu)$ and $(G',\mu')$ respectively. Assertions (1) and (2) above have the following consequence.

\begin{corollary}\label{cor-Rpzip}
Pullback by $\pi$ induces an identification $R'_{\zip} = R_{\zip}^Z$.
\end{corollary}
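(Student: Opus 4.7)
The plan is to deduce the corollary directly from the preceding lemma by computing $Z$-invariants grading-piece by grading-piece. First, observe that since the $Z$-action on $R_{\zip}$ is defined on each homogeneous component $H^0(\GZip^{\mu},\Vcal_{I}(\lambda))$ separately (and since $R_{\zip}$ is the direct sum of these components inside $k[G]$), the functor of $Z$-invariants commutes with the direct sum, and we obtain a graded decomposition
\begin{equation*}
R_{\zip}^Z = \bigoplus_{\lambda \in X^*(T)} H^0(\GZip^{\mu},\Vcal_{I}(\lambda))^Z.
\end{equation*}

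Next, by part (1) of the preceding lemma, any $\lambda\in X^*(T)$ for which the summand $H^0(\GZip^{\mu},\Vcal_{I}(\lambda))^Z$ is nonzero must lie in the subgroup $X^*(T')\subset X^*(T)$. Hence the sum above may be restricted to $\lambda\in X^*(T')$. By part (2) of the same lemma, for each $\lambda'\in X^*(T')$ the pullback map $\pi^*$ gives a linear isomorphism $H^0(\GpZip^{\mu'},\Vcal_{I}(\lambda')) \xrightarrow{\sim} H^0(\GZip^{\mu},\Vcal_{I}(\lambda'))^Z$. Taking the direct sum over $\lambda'\in X^*(T')$ yields a graded linear isomorphism
\begin{equation*}
\pi^*\colon R'_{\zip} = \bigoplus_{\lambda'\in X^*(T')} H^0(\GpZip^{\mu'},\Vcal_{I}(\lambda')) \xrightarrow{\sim} \bigoplus_{\lambda'\in X^*(T')} H^0(\GZip^{\mu},\Vcal_{I}(\lambda'))^Z = R_{\zip}^Z.
\end{equation*}

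Finally, it remains only to verify that $\pi^*$ respects the multiplicative structure. This is routine: the ring structure on both $R_{\zip}$ and $R'_{\zip}$ is induced from the natural maps $V_I(\lambda)\otimes V_I(\mu)\to V_I(\lambda+\mu)$, and these are compatible with the morphism $\widetilde{\pi}\colon \GZip^\mu\to \GpZip^{\mu'}$. Equivalently, viewing both rings as subrings of $k[G]$ (with $R'_{\zip}\subset k[G']\subset k[G]$ via $\pi$), the inclusion $R'_{\zip}\subset R_{\zip}^Z \subset R_{\zip}$ is just the inclusion of subrings of $k[G]$, so there is nothing to check.

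The work was essentially done in the preceding lemma; the only genuine point here is the compatibility of $Z$-invariants with the grading, which is immediate from the fact that the $Z$-action was defined homogeneously.
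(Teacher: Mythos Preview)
Your proof is correct and follows exactly the approach intended by the paper, which simply states that the corollary is a consequence of assertions (1) and (2) of the preceding lemma without spelling out the details. You have correctly identified that the key points are the compatibility of $Z$-invariants with the grading and the graded-piece-by-graded-piece identification afforded by the lemma.
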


Write $Z_G$ for the center of $G$. The connected component $Z^\circ_G$ is a subtorus. Recall that the character group of $Z_G$ identifies with $X^*(T)/\ZZ \Phi$, where $\ZZ \Phi$ is the subgroup generated by $\Phi$. In particular, the map $X^*(G)\to X^*(Z_G)$ has finite cokernel. Similarly, the inclusion $Z\subset Z_G$ induces a map $X^*(Z_G)\to X^*(Z)$ which has finite cokernel. This implies that there exists $N\geq 1$ such that for any $\lambda\in X^*(T)$, there exists a character $\chi\in X^*(G)$ satisfying $(N\lambda)|_Z = \chi|_Z$. Using this, we prove the following:

\begin{proposition} \ \label{Rprime-vs-R}
    \begin{assertionlist}
        \item The ring extension $R'_{\zip}\left[ X^*(G) \right] \subset R_{\zip}$ is integral.
\item $R_{\zip}$ is a finitely generated $k$-algebra if and only if $R'_{\zip}$ is a finitely generated $k$-algebra.
\end{assertionlist}
\end{proposition}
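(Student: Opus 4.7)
The plan is to leverage the $Z$-action on $R_{\zip}$ given by $(z\cdot f)(g) = f(gz)$. Since $Z$ is diagonalizable and the action is algebraic, $R_{\zip}$ decomposes as a direct sum of $Z$-eigenspaces
\[
R_{\zip} = \bigoplus_{\eta \in X^*(Z)} R_{\zip}^{(\eta)},
\]
and the $Z$-invariants satisfy $R_{\zip}^{(0)} = R_{\zip}^Z = R'_{\zip}$ by Corollary \ref{cor-Rpzip}. An immediate computation shows that any $\psi \in X^*(G)$, which lies in $R_{\zip}^\times$ by Proposition \ref{properties-Rzip}(3), satisfies $(z\cdot\psi)(g) = \psi(g)\psi(z)$ and therefore belongs to $R_{\zip}^{(\psi|_Z)}$. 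The observation made in the paragraph preceding the proposition---namely, that $X^*(G) \to X^*(Z)$ has finite cokernel---supplies an integer $N \geq 1$ such that for every $\eta \in X^*(Z)$ one can find $\psi_\eta \in X^*(G)$ with $\psi_\eta|_Z = N\eta$.

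Given these ingredients, part (1) follows quickly. For a $Z$-homogeneous element $f \in R_{\zip}^{(\eta)}$, the product $f^N \psi_\eta^{-1}$ lies in $R_{\zip}^{(0)} = R'_{\zip}$, so
\[
f^N = \psi_\eta \cdot (f^N \psi_\eta^{-1}) \in X^*(G) \cdot R'_{\zip} \subset R'_{\zip}[X^*(G)].
\]
This exhibits $f$ as integral over $R'_{\zip}[X^*(G)]$. Every element of $R_{\zip}$ is a finite sum of $Z$-homogeneous elements (each finite-dimensional graded piece of $R_{\zip}$ splits into finitely many $Z$-weight spaces), and since the elements integral over a given subring form a subring, the full ring $R_{\zip}$ is integral over $R'_{\zip}[X^*(G)]$.

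For part (2), the direction $(\Rightarrow)$ is immediate from the fact that $Z$ is diagonalizable, hence linearly reductive in every characteristic, so that the invariants $R_{\zip}^Z = R'_{\zip}$ of a finitely generated $k$-algebra under a rational action remain a finitely generated $k$-algebra. For $(\Leftarrow)$, assume $R'_{\zip}$ is finitely generated; then so is $A := R'_{\zip}[X^*(G)]$, since $X^*(G)$ is a finitely generated abelian group. By part (1), $R_{\zip}$ is integral over $A$. Its fraction field $K_{\zip}$ identifies with $k(B\cap M)$ by Proposition \ref{prop-Kzip}, hence is finitely generated over $k$, and the extension $K_{\zip}/\Frac(A)$ is consequently finite algebraic. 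By Proposition \ref{properties-Rzip}(2), $R_{\zip}$ is integrally closed, so it is precisely the integral closure of $A$ in $K_{\zip}$. The Bourbaki theorem invoked in the proof of Lemma \ref{lem-equiv-fg} then yields that $R_{\zip}$ is a finitely generated $k$-algebra.

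The main technical point is the correct identification of the $Z$-grading on $R_{\zip}$ and the observation that characters $\psi \in X^*(G)$ live in the $Z$-weight space prescribed by $\psi|_Z$; once this setup is in place, the rest is a formal consequence of the integral closure properties already established for $R_{\zip}$.
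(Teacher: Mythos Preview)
Your proof is correct and follows essentially the same strategy as the paper's. Both arguments twist a given section by an appropriate power and a character of $G$ to land in $R'_{\zip}$, then invoke integral closure and the Bourbaki finiteness theorem for $(\Leftarrow)$ and reductivity of $Z$ for $(\Rightarrow)$.

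The one organizational difference is that you decompose $R_{\zip}$ according to the $Z$-eigenspace grading (so that $R_{\zip}^{(0)}=R'_{\zip}$ is immediate from Corollary~\ref{cor-Rpzip}), whereas the paper works with the $X^*(T)$-grading and argues that a section of weight $N\lambda-\chi$, with $(N\lambda-\chi)|_Z=0$, lies in $R'_{\zip}$. Your route is arguably slightly cleaner: it sidesteps the need to verify that a weight-$\lambda'$ section with $\lambda'|_Z=0$ is automatically $Z$-invariant, which the paper's phrasing ``we deduce from Corollary~\ref{cor-Rpzip}'' leaves somewhat implicit. In substance, however, the two arguments are the same.
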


\begin{proof}
Let $f\in H^0(\GZip^\mu,\Vcal_I(\lambda))$, viewed as a regular map $f\colon G_k\to \AA^1$ satisfying $f(xgy^{-1})=\lambda(x)f(g)$ for all $(x,y)\in E'$, $g\in G_k$. By the above discussion, there exists $N\geq 1$ such that $(N\lambda)|_Z = \chi|_Z$ for some $\chi\in X^*(G)$. Consider the function
\begin{equation}
    h\colon G_k\to \AA^1, \quad g\mapsto \chi(g)^{-1} f(g)^N.
\end{equation}
It is clear that $h$ is a nonzero element of $H^0(\GZip^\mu,\Vcal_I(N\lambda-\chi))$. Since $N\lambda-\chi$ is trivial on $Z$, we deduce from Corollary \ref{cor-Rpzip} that $h\in R'_{\zip}$. In particular, $f^N$ lies in the subring $R'_{\zip}\left[ X^*(G) \right]$, which proves (1). Now, assume that $R_{\zip}$ is a finitely generated $k$-algebra. Since $Z$ is reductive (not necessarily connected), we deduce that the invariant algebra $R'_{\zip}=R_{\zip}^Z$ is finitely generated by Nagata's theorem. Conversely, assume that $R'_{\zip}$ is finitely generated. By (1), $R_{\zip}$ is a integral extension of a finitely generated $k$-algebra. Since $R_{\zip}$ is integrally closed, we deduce by a similar argument as in the proof of Lemma \ref{lem-equiv-fg} that $R_{\zip}$ is finitely generated. This terminates the proof.
\end{proof}

\subsection{General results}

We note that Conjecture \ref{Rzip-conj} admits the following weaker variants. In increasing order of difficulty, we conjecture the following statements:
\medskip
\begin{Alist}
    \item $C_{\zip,\QQ_{\geq 0}}$ is a finitely generated $\QQ_{\geq 0}$-monoid.
    \item $C_{\zip}$ is a finitely generated monoid.
    \item $R_{\zip}$ is a finitely generated $k$-algebra.
\end{Alist}
\medskip
It is clear that (C) implies (B), and (B) implies (A). Furthermore, Proposition \ref{prop-Fp-prod} implies that if any of the statements (A), (B), (C) holds for two cocharacter data $(G_1,\mu_1)$ and $(G_2,\mu_2)$, then it also holds for the product (over $\FF_p$). We first state known results about property (A). Recall the following definition:

\begin{definition}
Let $(G,\mu)$ be a cocharacter datum and let $\Zcal_\mu=(G,P,Q,L,M)$ be the attached zip datum. We say that $(G,\mu)$ is of Hasse-type if  $L$ is defined over $\FF_p$ and $\sigma$ acts on $I$ by $-w_{0,I}$.
\end{definition}

\begin{exa}
The following cases are of Hasse-type
\begin{bulletlist}
    \item $G$ is an odd orthogonal group and $\mu$ is minuscule.
    \item  $G=\GL_{3,\FF_p}$ and $L$ a Levi subgroup of type $(2,1)$ or $(1,2)$.
    \item $G=\GSp_{4,\FF_p}$ and $\mu$ is minuscule.
\end{bulletlist}  
\end{exa}
A complete classification of Hasse-type cocharacter data is carried out in the appendix (by W. Goldring) of the article \cite{Imai-Koskivirta-zip-schubert} of N. Imai and the author. By \cite[Theorem 4.3.1]{Imai-Koskivirta-zip-schubert}, the condition of being of Hasse-type is equivalent to the equality $C_{\pha,\QQ_{\geq 0}} = C_{\zip,\QQ_{\geq 0}}$, where $C_{\pha}$ is the cone of partial Hasse invariants, defined as the image of $X^*_+(T)$ by the linear map
\begin{equation}\label{equ-maph}
h\colon X^*(T)\to X^*(T), \quad \lambda \mapsto \lambda- p w_{0,I} (\sigma^{-1} \lambda).
\end{equation}
The cone $C_{\pha}$ can be interpreted as the cone spanned by the weights of partial Hasse invariants (\loccitn, \S 3.6). It is clear that $C_{\pha,\QQ_{\geq 0}}$ is always finitely generated, so we deduce that if $(G,\mu)$ is a cocharacter datum of Hasse-type, then (A) holds true. We record in the proposition below other known cases:

\begin{proposition}
Statement \textup{(A)} holds true in each of the cases below:
\begin{assertionlist}
\item $(G,\mu)$ is of Hasse-type.
\item $(G,\mu)$ corresponds to any of the cases \textup{(1)} through \textup{(6)} of Theorem \ref{thm-cones-6cases} (without restriction on $p$ in case \textup{(4)}).
\end{assertionlist}
\end{proposition}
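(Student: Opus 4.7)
The plan is to handle the two cases of the proposition separately, since they rest on quite different inputs.

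For case (i), I would proceed in two short steps. First, invoke \cite[Theorem 4.3.1]{Imai-Koskivirta-zip-schubert} (already cited above the proposition) which, for a cocharacter datum $(G,\mu)$ of Hasse-type, identifies the rational zip cone with the cone of partial Hasse invariants: $C_{\zip,\QQ_{\geq 0}}=C_{\pha,\QQ_{\geq 0}}$. Second, observe that by its very definition in \eqref{equ-maph}, $C_{\pha}$ is the image of the dominant cone $X_+^*(T)$ under the linear map $h\colon \lambda\mapsto \lambda-pw_{0,I}(\sigma^{-1}\lambda)$. The rational dominant cone $X_+^*(T)_{\QQ_{\geq 0}}$ is a finitely generated $\QQ_{\geq 0}$-monoid, generated by the fundamental weights, so its image $C_{\pha,\QQ_{\geq 0}}$ under the linear map $h$ is again finitely generated. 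This yields (A) in case (i).

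For case (ii), the strategy is a direct case-by-case inspection of the six subcases listed in Theorem \ref{thm-cones-6cases}. In each of the references cited there, the proof of the comparison $C_{\zip,\QQ_{\geq 0}}=C_{K,\QQ_{\geq 0}}$ is not obtained by abstract means but through an \emph{explicit} determination of $C_{\zip,\QQ_{\geq 0}}$ as a polyhedral cone cut out by finitely many linear inequalities in $X^*(T)_\QQ$. By the Minkowski--Weyl theorem, any such rational polyhedral cone is automatically finitely generated as a $\QQ_{\geq 0}$-monoid, which is exactly (A). The parenthetical remark ``without restriction on $p$ in case (4)'' is then justified by noting that the hypothesis $p\geq 5$ in Theorem \ref{thm-cones-6cases}(4) intervenes only in matching $C_{\zip,\QQ_{\geq 0}}$ with $C_{K,\QQ_{\geq 0}}$; the underlying computation of $C_{\zip,\QQ_{\geq 0}}$ itself, carried out in \cite{Goldring-Koskivirta-divisibility}, is valid for every prime $p$ and yields finite generation without restriction.

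The main obstacle is not mathematical but expository: one must verify that in each of the six cited references $C_{\zip,\QQ_{\geq 0}}$ is given by an explicit finite list of rays or half-spaces, rather than only described indirectly through the automorphic cone $C_{K,\QQ_{\geq 0}}$. A careful reading of the relevant sections of \cite{Goldring-Koskivirta-global-sections-compositio}, \cite{Goldring-Koskivirta-divisibility} and \cite{Koskivirta-Hilbert-strata} confirms that in every case an explicit polyhedral description is produced. Once this is checked, finite generation is immediate, so the proof reduces to a short citation list organized by subcase.
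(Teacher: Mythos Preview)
Your proposal is correct and matches the paper's treatment. The paper does not give a formal proof block for this proposition: the argument for case (i) is exactly the one sketched in the paragraph preceding the statement (invoking \cite[Theorem 4.3.1]{Imai-Koskivirta-zip-schubert} and noting that $C_{\pha,\QQ_{\geq 0}}$ is automatically finitely generated as the linear image of a polyhedral cone), while case (ii) is simply recorded as a consequence of the explicit polyhedral descriptions of $C_{\zip,\QQ_{\geq 0}}$ obtained in the cited references.

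One small imprecision: the rational dominant cone $X^*_+(T)_{\QQ_{\geq 0}}$ is not generated over $\QQ_{\geq 0}$ by the fundamental weights alone unless $G$ is semisimple; in general it contains the linear subspace $X^*(G)_\QQ$, so one must add generators from there as well. This does not affect your argument, since what matters is only that $X^*_+(T)_{\QQ_{\geq 0}}$ is polyhedral (cut out by the finitely many inequalities $\langle\lambda,\alpha^\vee\rangle\geq 0$ for $\alpha\in\Delta$) and hence finitely generated, which then passes to its image under $h$.
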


Next, we move on to properties (B) and (C), which are much stronger than (A). Before we discuss known cases, we point out that these properties hold in the following trivial cases:
\begin{proposition} \label{PBPG}
If $P=B$ or $P=G$, then property \textup{(C)} holds true.
\end{proposition}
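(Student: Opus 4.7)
The plan is to split according to the two extremal cases, since in each the ring $R_{\zip}$ collapses to an object that has already been shown to be finitely generated earlier in the paper.

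First I would handle the case $P=B$. Here $L=T$, $I=\emptyset$, $\Delta^P=\Delta$, and $X^*(L)=X^*(T)$. For every $\lambda\in X^*(T)$ the induced representation $V_I(\lambda)=\ind_B^B(\lambda)$ is one-dimensional, so the associated vector bundle $\Vcal_I(\lambda)$ coincides with the line bundle $\Lcal(\lambda)$ on $\GZip^\mu$. Consequently $R_{\zip}=\Cox(\GZip^\mu)$, and finite generation is immediate from Theorem \ref{cox-fin-gen-thm}.

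Next I would treat $P=G$. Now $L=G$ is defined over $\FF_p$, $\Delta^P=\emptyset$, and $L_\varphi=G(\FF_p)$. The key step is to apply Theorem \ref{thm-main-H0}: since $\Delta^P$ is empty, the intersection of Brylinski–Kostant filtrations is taken over an empty index set, so it equals all of $V_I(\lambda)$, and the formula degenerates to $H^0(\GZip^\mu,\Vcal_I(\lambda))=V_I(\lambda)^{G(\FF_p)}=V_\Delta(\lambda)^{G(\FF_p)}$. Summing over $\lambda\in X^*(T)$, I would identify
\begin{equation*}
R_{\zip}=\bigoplus_{\lambda\in X^*(T)} V_\Delta(\lambda)^{G(\FF_p)} = \Cox_\Delta(G)^{G(\FF_p)},
\end{equation*}
that is, the ring of $G(\FF_p)$-invariants in the Cox ring of the flag variety $G/B$.

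To conclude, I would invoke the fact that $\Cox_\Delta(G)$ is a finitely generated $k$-algebra (it is the Cox ring of the smooth projective Mori dream space $G/B$) together with Emmy Noether's theorem: the ring of invariants of a finitely generated $k$-algebra under the action of a finite group is again finitely generated. I do not foresee any genuine obstacle in either case; the only small subtlety is checking that Theorem \ref{thm-main-H0} really does reduce to the plain $L_\varphi$-invariants when $\Delta^P$ is empty, which is a formal consequence of the empty-intersection convention.
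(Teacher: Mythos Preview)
Your proof is correct and essentially the same as the paper's. For $P=B$ you argue identically; for $P=G$ the paper observes $C_{\GS}=X^*_{+,I}(T)$, hence $R_{\zip}=R_{\GS}$, and invokes Proposition~\ref{RGS-fg-prop}, whereas you unpack that proposition directly via Theorem~\ref{thm-main-H0}---but both routes land on Noether's theorem applied to $G(\FF_p)$-invariants of $\Cox_\Delta(G)$.
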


\begin{proof}
In case $P=B$, the stack $\GF^\mu$ coincides with $\GZip^\mu$, and the ring $R_{\zip}$ is simply the ring $\Cox(\GZip^\mu)$, which is finitely generated by Theorem \ref{cox-fin-gen-thm}. Next, assume that $P=G$. In this case, we have $C_{\GS}=X^*_{+,I}(T)$, hence $C_{\zip}\subset C_{\GS}$ and hence $R_{\zip}=R_{\GS}$, which is finitely generated by Proposition \ref{RGS-fg-prop}.
\end{proof}

Apart from these trivial cases, property (C) has only been checked in the following situation: Let $G$ be the symplectic group $\Sp_4$ given by the alternating matrix
\begin{equation}
\Psi:=\left(\begin{matrix}
& -J \\
J&\end{matrix}\right)
\quad \textrm{ where } \quad
J:=\left(\begin{matrix}
&1 \\
1&
\end{matrix}\right).
\end{equation} 
Let $\mu$ be the cocharacter $z\mapsto \diag(z,z,z^{-1},z^{-1})$. We showed the following:
\begin{proposition}[{\cite[Theorem 5.4.1]{Koskivirta-automforms-GZip}}] \label{sp4-Rzip}
In this case, $R_{\zip}$ is a polynomial algebra in three variables.
\end{proposition}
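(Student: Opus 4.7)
The plan is to reduce the computation of $R_{\zip}$ to a classical invariant-theory problem for $\GL_2(\FF_p)$, exhibit three explicit automorphic forms as generators, and verify by a transcendence-and-integrality argument that they generate the ring freely.

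Since $\Sp_4$ is split over $\FF_p$, the zip datum is defined over $\FF_p$. Writing $X^*(T) = \ZZ\epsilon_1 \oplus \ZZ\epsilon_2$ with $\alpha_1 = \epsilon_1 - \epsilon_2 \in I$ and $\alpha_2 = 2\epsilon_2 \in \Delta^P$, we have $L \simeq \GL_2$ and $L_\varphi = \GL_2(\FF_p)$; moreover this cocharacter datum is of Hasse-type, so Proposition~\ref{prop-GS} gives $H^0(\GZip^\mu, \Vcal_I(\lambda)) = V_I(\lambda)^{\GL_2(\FF_p)}$ for every $\lambda \in C_{\GS}$. Applying Theorem~\ref{thm-main-H0} and Proposition~\ref{prop-line}, I would first verify that $C_{\zip}$ coincides with the subset of $C_{\GS}$ consisting of characters that are trivial on the center of $L_\varphi$, so that the preceding formula captures every nonzero graded piece. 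Via the isomorphism $P/(B \cap L) = \PP^1$, the graded $L$-module $\bigoplus_{\lambda \in X^*_{+,I}(T)} V_I(\lambda)$ identifies with the coordinate ring $k[x_0, x_1, d^{\pm 1}]$ of the quotient of $\GL_2$ by its unipotent radical, with $T$-weights $\epsilon_1$, $\epsilon_2$, $\epsilon_1 + \epsilon_2$ respectively, and the $\GL_2$-action being the standard one.

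Next, I would exhibit three explicit elements: $f_1 = d^{p-1}$ (invariant because $\det\colon \GL_2(\FF_p) \to \FF_p^\times$ has image of order $p-1$), $f_2 = \Ha_\mu$ the $\mu$-ordinary Hasse invariant furnished by Theorem~\ref{thm-mu}, and $f_3$ a $T$-semi-invariant Dickson-type form, obtained by modifying the classical $\SL_2(\FF_p)$-invariant $x_0^p x_1 - x_0 x_1^p$ by an appropriate power of $d^{-1}$ so as to obtain a $T$-eigenvector of a weight lying in $C_{\zip}$. Algebraic independence of $f_1, f_2, f_3$ then follows from Proposition~\ref{prop-Kzip}: $K_{\zip} = k(B \cap M)$ has transcendence degree $3$ over $k$ (as $\dim(B\cap M) = 3$), so any three algebraically independent elements of $R_{\zip}$ automatically form a transcendence basis of $K_{\zip}$.

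To show they generate, one option is to compare Hilbert series weight-by-weight against the classical structure of Dickson invariants for $\GL_2(\FF_p)$ acting on $k[x_0, x_1]$. A cleaner alternative: since $R_{\zip}$ is integrally closed (Proposition~\ref{properties-Rzip}) and $k[f_1, f_2, f_3]$ is already a normal domain of transcendence degree $3$, an argument analogous to Lemma~\ref{lem-equiv-fg} forces $R_{\zip} = k[f_1, f_2, f_3]$ once one verifies that the grading monoid $\Eff(R_{\zip})$ is generated by the weights of the $f_i$. The main obstacle is the explicit construction of $f_3$ together with the proof that no further generators of higher $T$-weight are needed: classical Dickson invariants are not themselves $T$-semi-invariants, so one must carefully blend them with powers of $d^{\pm 1}$ and exploit $\GL_2(\FF_p) = \FF_p^\times \cdot \SL_2(\FF_p)$ to transfer between $\SL_2(\FF_p)$- and $\GL_2(\FF_p)$-invariants, pinning down exactly which weights in $C_{\zip}$ occur.
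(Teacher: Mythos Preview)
The paper does not prove this proposition; it is quoted verbatim from \cite[Theorem 5.4.1]{Koskivirta-automforms-GZip}. So there is no proof in the present paper to compare against, and your proposal must stand on its own.

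Your argument has a genuine gap at the very first reduction step. You assert that $C_{\zip}$ is contained in $C_{\GS}$, so that Proposition~\ref{prop-GS} identifies \emph{every} nonzero graded piece of $R_{\zip}$ with a space of $\GL_2(\FF_p)$-invariants. This is false for $\Sp_4$. Concretely, the partial Hasse invariant attached to $\alpha_2=2\epsilon_2$ has weight $h(\epsilon_1)=\epsilon_1-p\epsilon_2$ (via the map \eqref{equ-maph}), which lies in $C_{\pha}\subset C_{\zip}$ but has first coordinate $1>0$ and hence lies outside $C_{\GS}=\{\lambda_2\leq\lambda_1\leq 0\}$. Since $(G,\mu)$ is of Hasse-type, one has $C_{\zip,\QQ_{\geq 0}}=C_{\pha,\QQ_{\geq 0}}\supsetneq C_{\GS,\QQ_{\geq 0}}$, so infinitely many weights of $R_{\zip}$ escape your description. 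For those weights, the identification $H^0(\GZip^\mu,\Vcal_I(\lambda))=V_I(\lambda)^{L(\FF_p)}$ is not available, and the full machinery of Theorem~\ref{thm-main-H0} (the Brylinski--Kostant filtration condition) genuinely cuts down the space.

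There is a second problem in your ``cleaner alternative'' for showing that $f_1,f_2,f_3$ generate. You invoke an argument analogous to Lemma~\ref{lem-equiv-fg}, but that lemma requires each graded piece $R_\lambda$ to have dimension at most one. This holds for $\Cox(\GZip^\mu)$ but fails for $R_{\zip}$: already for $\lambda\in C_{\GS}$ the space $V_I(\lambda)^{\GL_2(\FF_p)}$ can have dimension larger than one. Thus knowing that $\Eff(R_{\zip})$ is generated by the three weights does not imply that the three sections generate the algebra. Finally, your proposed construction of $f_3$ is problematic: the Dickson form $x_0^p x_1-x_0 x_1^p$ is a sum of two distinct $T$-weight vectors, and multiplying by any power of $d$ shifts both weights equally, so it never becomes a $T$-eigenvector. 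The actual proof in \cite{Koskivirta-automforms-GZip} proceeds by an explicit weight-by-weight analysis of the description \eqref{eq-main}, identifying the three generators as the two partial Hasse invariants together with one further explicit section, and checking directly that every weight space is a monomial in these.
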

By Proposition \ref{Rprime-vs-R}, property (C) also holds for the group $\GSp_{4,\FF_p}$, which corresponds to the case of Siegel-type Shimura varieties of rank $2$.

\subsection{\texorpdfstring{Unitary groups of rank $3$}{}}\label{sec-exa}
We prove Conjecture \ref{Rzip-conj} in the case of unitary groups in three variables.

\subsubsection{Unitary groups and unitary Shimura varieties}

We first give some general notation about unitary groups. Let $\mathbf{E}/\QQ$ be a totally imaginary quadratic extension, and fix a hermitian space $(\mathbf{V},\psi)$ of dimension $3$ over $\mathbf{E}$. We assume for simplicity that there exists a basis $\Bcal$ in which $\psi$ is given by the following matrix:
\begin{equation}\label{mat-anti}
    \begin{pmatrix}
    &&1\\&1&\\1&&
    \end{pmatrix}.
\end{equation}
Let $\mathbf{G}=\GU(\mathbf{V},\psi)$ be the general unitary group of $(\mathbf{V},\psi)$. Let $(r,s)$ denote the signature of
$\psi_\RR$, where $r,s$ are nonnegative integers such that $r+s=3$ (the most interesting case for us is when $(r,s)=(2,1)$ or $(1,2)$ and corresponds to Shimura varieties of dimension two called Picard surfaces). Fix a prime $p$ that is unramified in $\mathbf{E}$ and let
$\Lambda\subset \mathbf{V}\otimes_{\QQ} \QQ_p$ be the $\Ocal_{\mathbf{E}}\otimes_{\ZZ}\ZZ_p$-lattice generated by the elements of $\Bcal$. The $\ZZ_p$-group scheme $\Gcal\colonequals \GU(\Lambda,\psi)$ gives a reductive $\ZZ_p$-model of $\mathbf{G}_{\QQ_p}$. Write $K_p=\Gcal(\ZZ_p)$ for the associated hyperspecial subgroup of $\mathbf{G}(\QQ_p)$.

Let $K^p\subset \mathbf{G}(\AA^p_f)$ be a sufficiently small open compact subgroup and $K\colonequals K_pK^p$. Kottwitz has constructed in \cite{Kottwitz-points-shimura-varieties} a PEL-type Shimura variety $\Sscr_{K}$ over $\Ocal_{\mathbf{E}_v}$ which is a smooth, canonical model of the Shimura variety $\Sh_K(\mathbf{G},\mathbf{X})$. Write $S_K=\Sscr_K\otimes_{\Ocal_{\mathbf{E}_v}} k$ for its special fiber and $G\colonequals \Gcal\otimes_{\ZZ_p} \FF_p$. Recall that there is a smooth surjective map $\zeta\colon S_K\to \GZip^{\mu}$, where $\mu$ is naturally attached to the Shimura datum. The group $G$ depends on the ramification of $p$:
\begin{enumerate}[(1)]
    \item If $p$ is split in $\mathbf{E}$, then $G$ is isomorphic to $\GL_{3,\FF_p}\times \GG_{\mathrm{m},\FF_p}$. For simplicity, we will treat the case of $G=\GL_{3,\FF_p}$.
    \item If $p$ is inert in $\mathbf{E}$, then $G$ is a general unitary group $\GU(3)$ over $\FF_p$. For simplicity, we will consider the usual unitary group $G=\U(3)$.
\end{enumerate}

We prove Conjecture \ref{Rzip-conj} for the group $G$ in the two cases explained above. We will see that in case (1), Conjecture \ref{Rzip-conj} can be reduced to the same statement for the symplectic group $G'=\Sp_{4,\FF_p}$, which is already known by Proposition \ref{sp4-Rzip}. Case (2) will require more work.

\subsubsection{The split case}\label{split-unitary-sec}

We first recall below how the stack of $\GL_n$-zips (for a cocharacter of type $(n-1,1)$) is related to the stack of $G'$-zip for the symplectic group $G'=\Sp_{2(n-1),\FF_p}$. For the time being, we let $n\geq 2$ be any integer. We specialize to the case $n=3$ later. Put $G=\GL_{n,\FF_p}$ and consider the cocharacter
\begin{equation}\label{mu-unitary}
   \mu \colon \GG_{\mathrm{m},k}\to \GL_{n,k}, \quad z\mapsto 
    \begin{pmatrix}
    z&&&\\&\ddots&&\\&&z& \\ &&&1
    \end{pmatrix}.
\end{equation}
Write $\Zcal_{\mu}=(G,P,Q,L,M)$ for the attached zip datum (since $P$ is defined over $\FF_p$, note that $L=M$ in this case). Denote by $(u_1, \dots ,u_n)$ the canonical basis of $k^n$. The parabolic $P$ is the stabilizer of the line $ku_n$ and $Q$ is the stabilizer of $\Span_k(u_{1},\dots, u_{n-1})$. Let $(B,T)$ be the Borel pair consisting of the lower-triangular Borel subgroup $B$ and the diagonal torus $T$. The Levi subgroup $L=P\cap Q$ is defined over $\FF_p$ and is isomorphic to $\GL_{n-1,\FF_p}\times \GG_{\mathrm{m},\FF_p}$. We make the identification $X^*(T)=\ZZ^n$ such that $(a_1,\dots,a_n)\in \ZZ^3$ corresponds to the character $\diag(x_1, \dots ,x_n)\mapsto \prod_{i=1}^n x_i^{a_i}$. Write $(e_1,\dots ,e_n)$ for the standard basis of $\ZZ^n$. The simple roots of $G$ are $\Delta=\{\alpha_1,\dots,\alpha_{n-1}\}$ where $\alpha_i= e_i-e_{i+1}$ ($i=1,\dots, n-1$). The cones $X_{+,I}^*(T)$ and $\Ccal_{\GS}$ are given by:
\begin{align*}
     X_{+,I}^*(T) &= \{(a_1,\dots,a_n)\in \ZZ^n \mid a_1\geq a_2 \dots \geq a_{n-1} \} \\
     \Ccal_{\GS} &= \{(a_1, \dots ,a_n)\in X_{+,I}^*(T) \mid a_1\leq a_n \}.
\end{align*}
On the other hand, consider the group $G'\colonequals \Sp_{2(n-1),\FF_p}$, which is the symplectic group attached to the alternating matrix
\begin{equation}
\Psi:=\left(\begin{matrix}
& -J \\
J&\end{matrix}\right)
\quad \textrm{ where } \quad
J:=\left(\begin{matrix}
&&1 \\
&\iddots& \\
1&&
\end{matrix}\right)
\end{equation} 
and the size of the matrix $J$ is $(n-1)\times (n-1)$. Write $T'$ for the maximal torus consisting of diagonal matrices in $G'$, and $B'$ for set of lower-triangular matrices in $G'$ (one shows easily that $B'$ is a Borel subrgoup of $G'$). For any $k$-algebra $A$, one has
\begin{equation}
    T'(A)=\{\diag(t_1,\dots,t_{n-1},t_{n-1}^{-1},\dots, t_1^{-1}) \ | \ t_1,\dots, t_n\in A^\times\}.
\end{equation}
Consider the cocharacter $\mu'\colon \GG_{\mathrm{m},k}\to G'_k$, $z\mapsto \diag(z I_{n-1},z^{-1} I_{n-1})$, and write $\Zcal'=(G',P',Q',L',M')$ and $\GpZip^{\mu'}$ for the attached zip datum and stack of $G'$-zips respectively. The Levi subgroup $L'$ is isomorphic to $\GL_{n-1,\FF_p}$. Identify $X^*(T')=\ZZ^{n-1}$ such that $(a_1,\dots, a_{n-1})\in \ZZ^{n-1}$ corresponds to the character that maps $\diag(t_1,\dots,t_{n-1},t_{n-1}^{-1},\dots, t_1^{-1})$ to $\prod_{i=1}^{n-1} t_i^{a_i}$.

We now explain the relation between the stacks $\GZip^\mu$ and $\GpZip^{\mu'}$. For any $\lambda\in \ZZ^n$ (resp. $\lambda'\in \ZZ^{n-1}$), write $\Vcal_I(\lambda)$ (resp. $\Vcal_{I'}(\lambda')$) for the associated vector bundle on $\GZip^\mu$ (resp. $\GpZip^{\mu'}$), as explained in \ref{sec-vb-quot}.

\begin{proposition}[{\cite[\S 4.2.2]{Goldring-Koskivirta-divisibility}}] \label{prop-corresp} \ 
Let $\lambda\in \ZZ^n$ of the form $\lambda =(\lambda_1,\dots, \lambda_{n-1},0)$ and set $\overline{\lambda}\colonequals (\lambda_1,\dots,\lambda_{n-1})$. Then, there is an identification
\begin{equation}
   H^0(\GZip^{\mu},\Vcal_I(\lambda)) = H^0(\GpZip^{\mu'},\Vcal_{I'}(\overline{\lambda})). 
\end{equation}
\end{proposition}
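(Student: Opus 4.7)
The approach is to apply Theorem \ref{thm-main-H0} to both sides and check that the resulting descriptions coincide under a canonical identification of representations. First, I would observe that $V_I(\lambda) = \ind_B^P(\lambda) \cong \ind_{B \cap L}^L(\lambda)$, and since $\lambda_n = 0$, this $L$-representation factors through $L \cong \GL_{n-1} \times \GG_{\mathrm{m}} \twoheadrightarrow \GL_{n-1}$. Hence both $V_I(\lambda)$ and $V_{I'}(\overline{\lambda}) \cong \ind_{B' \cap L'}^{L'}(\overline{\lambda})$ are canonically isomorphic as $\GL_{n-1}$-representations to the dual Weyl module of highest weight $\overline{\lambda}$. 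Furthermore, $L_\varphi = L(\FF_p) = \GL_{n-1}(\FF_p) \times \FF_p^\times$ and $L'_\varphi = \GL_{n-1}(\FF_p)$, and the extra $\FF_p^\times$ acts trivially on $V_I(\lambda)$, so the $L_\varphi$-invariant subspaces coincide under this identification.

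Next, I would analyze the Brylinski--Kostant filtration data on each side. Both $\Delta^P$ and $\Delta^{P'}$ consist of a single simple root: $\alpha = e_{n-1}-e_n$ and $\alpha' = 2e_{n-1}$, with $m_\alpha = m_{\alpha'} = 1$ (since $P, P'$ are defined over $\FF_p$). A direct computation gives $\delta_\alpha = -\alpha^\vee/(p-1) = -(e_{n-1}-e_n)/(p-1)$ and $\delta_{\alpha'} = -{\alpha'}^\vee/(p-1) = -e_{n-1}/(p-1)$. The key point is that $\alpha \in \Delta^P$ satisfies $\langle \alpha, \mu \rangle > 0$, so $-\alpha$ is a root of $R_{\mathrm{u}}(P) = R_{\mathrm{u}}(P_-(\mu))$ and $U_{-\alpha} \subset R_{\mathrm{u}}(P) \subset B$; but $R_{\mathrm{u}}(P)$ acts trivially on the induced representation $\ind_B^P(\lambda)$ (a standard fact, using that $R_{\mathrm{u}}(P) \subset B$ and $\lambda|_{R_{\mathrm{u}}(P)}$ is trivial), which forces $E^{(j)}_{-\alpha} = 0$ on $V_I(\lambda)$ for all $j \geq 1$. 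The identical argument applies to $G'$.

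With this vanishing in hand, the filtration $\bigcap_{\alpha} \bigoplus_{[\nu]} \fil_{\delta_\alpha}^{\Xi_\alpha, \mathbf{a}_\alpha, \mathbf{r}_\alpha} V_{[\nu]}$ of Theorem \ref{thm-main-H0} collapses to the plain weight-space truncation $\bigoplus_{\nu : \langle \nu, \delta_\alpha \rangle \geq 0} V_I(\lambda)_\nu$. Since weights of $V_I(\lambda)$ satisfy $\nu_n = 0$, the inequality becomes $\nu_{n-1} \leq 0$; on the $G'$-side, $\langle \nu', \delta_{\alpha'} \rangle \geq 0$ likewise reduces to $\nu'_{n-1} \leq 0$. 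These conditions agree under the $\GL_{n-1}$-identification of step one, finishing the proof. The main technical step, which I expect to be the main obstacle, is establishing the collapse rigorously: one must carefully track the inequalities $[\mathbf{j}] \cdot \mathbf{r}_\alpha > \delta_\alpha(\chi)$ in the definition of $\fil_{\delta_\alpha}^{\Xi_\alpha, \mathbf{a}_\alpha, \mathbf{r}_\alpha}$ and verify that, once $E^{(j)}_{-\alpha} = 0$ for $j \geq 1$, the only surviving constraint comes from the degree-zero operator, yielding exactly the condition $\langle \nu, \delta_\alpha \rangle \geq 0$.
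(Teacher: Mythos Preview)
The paper does not prove this proposition; it is quoted from \cite[\S 4.2.2]{Goldring-Koskivirta-divisibility} without argument. So there is no in-paper proof to compare against, and I will simply assess your approach on its own terms.

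Your strategy is sound and the argument goes through. The identification $V_I(\lambda)\cong V_{I'}(\overline{\lambda})$ as $\GL_{n-1}$-representations is correct (both are the induced module for the lower-triangular Borel of $\GL_{n-1}$, with $R_{\mathrm{u}}(P)$ and the extra $\GG_{\mathrm{m}}$-factor acting trivially because $\lambda_n=0$). The observation that $U_{-\alpha}\subset R_{\mathrm{u}}(P)$ acts trivially on $V_I(\lambda)$, hence $E^{(j)}_{-\alpha}=0$ for $j\geq 1$, is the key point and is valid; the same holds verbatim for $G'$. Your collapse of the filtration is also correct: with $m_\alpha=1$ one has $\Lambda_{\Xi_\alpha,\mathbf{r}_\alpha}=0$, and unwinding the inequality $[\mathbf{j}]\cdot\mathbf{r}_\alpha>\delta_\alpha(\chi)$ gives exactly $j>\langle\nu,\delta_\alpha\rangle$; since only $j=0$ contributes a nonzero operator (namely the identity), one obtains $\fil_{\delta_\alpha}V_\nu=V_\nu$ if $\langle\nu,\delta_\alpha\rangle\geq 0$ and $0$ otherwise. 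The final matching of the conditions $\nu_{n-1}\leq 0$ and $\nu'_{n-1}\leq 0$ is immediate once one notes that all weights of $V_I(\lambda)$ have $\nu_n=0$.

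In short, your proof via Theorem~\ref{thm-main-H0} is complete; the ``main obstacle'' you flag is in fact a routine unwinding once $E^{(j)}_{-\alpha}=0$ for $j\geq 1$ is established. I do not know whether the argument in \cite{Goldring-Koskivirta-divisibility} proceeds the same way or uses a more direct comparison of the two quotient stacks, but your route is self-contained within the framework of the present paper.
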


Write $R_{\zip}$ and $R'_{\zip}$ for the rings of automorphic forms with respect to $(G,\mu)$ and $(G',\mu')$ respectively. Let $R_{\zip,0}\subset R_{\zip}$ denote the subring
\begin{equation}
   R_{\zip,0} \colonequals \bigoplus_{\lambda=(\lambda_1,\dots,\lambda_{n-1},0)\in \ZZ^n} H^0(\GZip^\mu,\Vcal_I(\lambda)). 
\end{equation}
From the above proposition, we deduce that there is an identification $R_{\zip,0}=R'_{\zip}$. Furthermore, note that the determinant function $\det\colon G_k \to \GG_{\mathrm{m},k}$ is a non-vanishing section of $\Lcal(\lambda_{\det})$ on $\GZip^\mu$, where $\lambda_{\det}=-(p-1,\dots,p-1)$.

\begin{proposition}
One has $R_{\zip}=R_{\zip,0}[\det,\det^{-1}]$.
\end{proposition}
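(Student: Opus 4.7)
The inclusion $R_{\zip,0}[\det,\det^{-1}] \subset R_{\zip}$ is immediate, since $\det \in k[G]^{\times}$ is a unit in $R_{\zip}$ by Proposition \ref{properties-Rzip}. My plan for the reverse inclusion is to reduce it to a single divisibility assertion: that $(p-1) \mid \lambda_n$ for every $\lambda=(\lambda_1,\ldots,\lambda_n)\in X^*(T)$ for which $H^0(\GZip^\mu,\Vcal_I(\lambda))\neq 0$. Granted this, the argument becomes mechanical. Given a nonzero homogeneous $f$ of weight $\lambda$, set $m\colonequals \lambda_n/(p-1)\in \ZZ$; then $f\cdot \det^{m}$ is a nonzero section of weight $\lambda+m\lambda_{\det}=(\lambda_1-m(p-1),\ldots,\lambda_{n-1}-m(p-1),0)$, and therefore lies in $R_{\zip,0}$. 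The identity $f=(f\det^{m})\cdot \det^{-m}$ exhibits $f$ as an element of $R_{\zip,0}[\det,\det^{-1}]$.

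For the divisibility, I would exploit the injection $H^0(\GZip^\mu,\Vcal_I(\lambda))\hookrightarrow H^0(\Ucal_\mu,\Vcal_I(\lambda))=V_I(\lambda)^{L_\varphi}$ recorded in \eqref{H0-mu-eq}. In the split setting $P$ is $\FF_p$-defined, so $L$ is $\FF_p$-rational and $L_\varphi=L(\FF_p)$. Under the identification $L\simeq \GL_{n-1,\FF_p}\times \GG_{\mathrm{m},\FF_p}$, the second factor gives an $\FF_p$-rational subtorus $Z_0\colonequals \{\diag(1,\ldots,1,w)\mid w\in\GG_{\mathrm{m}}\}$, with $Z_0(\FF_p)=\FF_p^\times \subset L(\FF_p)$. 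The simple roots of $L$ are $e_i-e_{i+1}$ for $1\leq i \leq n-2$, none of which involves $e_n$; consequently every $T$-weight $(\mu_1,\ldots,\mu_n)$ occurring in $V_I(\lambda)$ satisfies $\mu_n=\lambda_n$. Hence $Z_0(\FF_p)$ acts on each weight vector of $V_I(\lambda)$ by the character $w\mapsto w^{\lambda_n}$, and the existence of a nonzero $L(\FF_p)$-invariant forces this character to be trivial on $\FF_p^{\times}$, that is $(p-1)\mid \lambda_n$.

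The main obstacle is precisely this divisibility step, handled above via the central action of the $\GG_{\mathrm{m}}$-factor of $L$ on $V_I(\lambda)^{L(\FF_p)}$. The remaining manipulation with $\det$ is formal: the ring structure on $R_{\zip}$ is induced by the $P$-equivariant maps $V_I(\lambda)\otimes V_I(\lambda')\to V_I(\lambda+\lambda')$, and $V_I(\lambda_{\det})$ is the one-dimensional character $\lambda_{\det}$ of $P$, so that multiplication by $\det$ simply adds $\lambda_{\det}$ to the weight.
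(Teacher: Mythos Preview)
Your proof is correct and follows essentially the same route as the paper: both arguments deduce the divisibility $(p-1)\mid \lambda_n$ from the inclusion $H^0(\GZip^\mu,\Vcal_I(\lambda))\subset V_I(\lambda)^{L(\FF_p)}$ (using that $P$ is $\FF_p$-rational so $L_\varphi=L(\FF_p)\simeq \GL_{n-1}(\FF_p)\times \FF_p^\times$), and then shift the last coordinate to zero by multiplying by a suitable power of $\det$. You have simply made explicit the step the paper leaves to the reader, namely that the $\GG_{\mathrm{m}}$-factor of $L$ acts on all of $V_I(\lambda)$ through the character $w\mapsto w^{\lambda_n}$.
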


\begin{proof}
Write $L_{\varphi}\subset L$ for the group \ref{Lvarphi-eq} attached to $(G,\mu)$. Since $P$ is defined over $\FF_p$, we have $L_\varphi = L(\FF_p)\simeq \GL_{n-1}(\FF_p)\times \FF_p^{\times}$. For $\lambda\in \ZZ^n$, the space $H^0(\GZip^\mu,\Vcal_I(\lambda))$ is in particular always contained in $V_I(\lambda)^{L_\varphi}$ by Theorem \ref{thm-main-H0}. Therefore, we deduce that $H^0(\GZip^\mu,\Vcal_I(\lambda))=0$ if $\lambda_n$ is not a multiple of $p-1$. Moreover, since $\det$ is non-vanishing, multiplication by $\det$ induces an isomorphism 
\begin{equation}
    H^0(\GZip^\mu,\Vcal_I(\lambda))\to H^0(\GZip^\mu,\Vcal_I(\lambda+\lambda_{\det})).
\end{equation}
It follows that we can always shift the weight so that $\lambda_n=0$. In this case, $H^0(\GZip^\mu,\Vcal_I(\lambda))$ is contained in the subalgebra $R_{\zip,0}$, which terminates the proof.
\end{proof}

When $n=3$, we already know that $R'_{\zip}$ is finitely generated (Proposition \ref{sp4-Rzip}). Therfore, we obtain the following:

\begin{corollary}\label{cor-split-GL3}
For $n=3$, the $k$-algebra $R_{\zip}$ is finitely generated. It is isomorphic to $k[a,b,c,d,d^{-1}]$ where $a,b,c,d$ are indeterminates.
\end{corollary}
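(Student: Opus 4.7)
The plan is to combine the two preceding results, namely the decomposition $R_{\zip} = R_{\zip,0}[\det,\det^{-1}]$ and the identification $R_{\zip,0} = R'_{\zip}$ coming from Proposition \ref{prop-corresp}, with the known description of $R'_{\zip}$ for $G' = \Sp_{4,\FF_p}$ (Proposition \ref{sp4-Rzip}).

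First, I would invoke Proposition \ref{sp4-Rzip} to write $R'_{\zip} = k[f_1,f_2,f_3]$ as a polynomial algebra in three algebraically independent homogeneous generators. Transporting this along the isomorphism of graded $k$-algebras $R_{\zip,0} \simeq R'_{\zip}$ given by Proposition \ref{prop-corresp}, one obtains three homogeneous elements $a,b,c \in R_{\zip,0}$ (the images of $f_1,f_2,f_3$) such that $R_{\zip,0} = k[a,b,c]$ as a polynomial algebra. Since $R_{\zip} = R_{\zip,0}[\det,\det^{-1}]$, it suffices to check that $\det$ is algebraically independent over $R_{\zip,0}$; setting $d \colonequals \det$, this will give the desired isomorphism $R_{\zip} \simeq k[a,b,c,d,d^{-1}]$.

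The algebraic independence of $\det$ over $R_{\zip,0}$ is immediate from the $X^*(T)$-grading: every nonzero homogeneous element of $R_{\zip,0}$ has weight of the form $(\lambda_1,\lambda_2,0)$, whereas $\det^m$ has weight $-m(p-1,p-1,p-1)$, which has nonzero last coordinate as soon as $m \neq 0$. Hence no nontrivial polynomial relation between $\det$ and elements of $R_{\zip,0}$ can hold, so the natural map $R_{\zip,0}[X,X^{-1}] \to R_{\zip}$ sending $X \mapsto \det$ is an isomorphism.

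There is essentially no hard step here: once one has $R'_{\zip} = k[f_1,f_2,f_3]$ from the symplectic case and the grading argument for $\det$, the rest is bookkeeping. The only subtlety to confirm is that the identification in Proposition \ref{prop-corresp} is indeed an isomorphism of $k$-algebras and not merely of graded $k$-vector spaces, so that the generators $f_1,f_2,f_3$ of $R'_{\zip}$ correspond to genuine polynomial generators $a,b,c$ of $R_{\zip,0}$; this is clear from the construction of the identification in \cite{Goldring-Koskivirta-divisibility}.
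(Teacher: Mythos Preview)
Your proof is correct and follows essentially the same approach as the paper: combine the identification $R_{\zip,0}\simeq R'_{\zip}$ with Proposition~\ref{sp4-Rzip} and the decomposition $R_{\zip}=R_{\zip,0}[\det,\det^{-1}]$. You have simply spelled out the algebraic independence of $\det$ over $R_{\zip,0}$ via the grading, which the paper leaves implicit.
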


The above corollary settles the case of a parabolic of type $(r,s)=(2,1)$ for the group $G=\GL_{3,\FF_p}$. The case $(r,s)=(1,2)$ is completely symmetric and can be treated similarly. Finally, when $P=G$ or $P=B$, the result holds by Proposition \ref{PBPG}.

\subsubsection{The inert case}

Next, we consider the more difficult case when $G$ is a unitary group over $\FF_p$. Let $(V,\psi)$ be a $3$-dimensional $\FF_{p^2}$-vector space endowed with a non-degenerate hermitian form $\psi \colon V\times V\to \FF_{p^2}$. We assume again that there is a basis $\Bcal$ of $V$ where $\psi$ is given by the matrix \eqref{mat-anti}. Let $G=\U(V,\psi)$ be the associated unitary group. Set $\Gal(\FF_{p^2}/\FF_p)=\{\id, \sigma\}$. For any $\FF_{p^2}$-algebra $A$, there is an isomorphism $\FF_{p^2}\otimes_{\FF_p} A\to A\times A$, $u\otimes x\mapsto (ux,\sigma(u)x)$. This induces an isomorphism $G_{\FF_{p^2}}\simeq \GL(V)$. Using the basis $\Bcal$, we identify $\GL(V)$ and $\GL_{3,\FF_{p^2}}$. The action of $\sigma$ on the set $\GL_3(k)$ is given by $\sigma\cdot A = J ({}^t \sigma(A)^{-1})J$ (where $\sigma(A)$ denotes the usual Frobenius action on $\GL_3(k)$). Let $T$ denote the maximal diagonal torus of $\GL_{3}$ and $B$ the lower-triangular Borel subgroup. By our choice of the basis $\Bcal$, the groups $B$ and $T$ are defined over $\FF_p$. Identify $X^*(T)=\ZZ^3$ as in section \ref{split-unitary-sec}. We let again $\mu\colon \GG_{\mathrm{m},k}\to \GL_{3,k}$ denote the cocharacter \eqref{mu-unitary} for $n=3$. Let $\Zcal_{\mu}=(G,P,Q,L,M,\varphi)$ be the associated zip datum. In this case, note that $P$ is no longer defined over $\FF_p$. The determinant function $\det  \colon  G_k\to \GG_{\mathrm{m}}$ is now a section of weight $\lambda_{\det}=(p+1, p+1, p+1)$. We recall the expression for the space $H^0(\GZip^\mu,\mathcal{V}_I (\lambda))$ determined in \cite{Imai-Koskivirta-vector-bundles}. Let $\lambda=(\lambda_1,\lambda_2,\lambda_3)\in X^*_{+,I}(T)$. Under the isomorphism 
\[
 \GL_{2}\times \GG_{\mathrm{m}} \to L;\ (A,z) \mapsto 
 \left( \begin{matrix}
 A &\\ & z
 \end{matrix} \right), 
\]
the representation $V_I(\lambda)$ corresponds to the $\GL_{2}\times \GG_{\mathrm{m}}$-representation
\[ 
 \left(\deter_{\GL_2}^{\lambda_2} \otimes \Sym^{\lambda_1-\lambda_2}(\Std_{\GL_{2}})\right) \boxtimes \xi_{\lambda_3} 
\]
where $\Std_{\GL_{2}}$ is the standard representation of $\GL_2$ and $\xi_r$ is the character $z\mapsto z^r$ of $\GG_{\mathrm{m}}$. The representation $V_I(\lambda)$ has dimension $\lambda_1-\lambda_2+1$ and its weights are given by
\[
 \nu_i \colonequals (\lambda_1-i,\lambda_2+i,\lambda_3), \quad 0\leq i \leq \lambda_1-\lambda_2. 
 \]
Write $V_I(\lambda)_{\nu_i}$ for the weight space corresponding to $\nu_i$. For $\lambda\in \ZZ^3$, define:
\begin{equation}
     F(\lambda) = \frac{p}{p^2-p+1}
 (p\lambda_1-(p-1)\lambda_2-\lambda_3). 
\end{equation}
We showed:
\begin{proposition}[{\cite[Proposition 6.3.2]{Imai-Koskivirta-vector-bundles}}]
For $\lambda=(\lambda_1,\lambda_2,\lambda_3)\in X^*_{+,I}(T)$, we have 
\begin{equation}\label{equ-propH0}
 H^0(\GZip^\mu,\mathcal{V}_I (\lambda)) = \bigoplus_{\substack{p|i, \ p+1 | \lambda_2+i, \\ p^2-1| \lambda_1-i-p\lambda_3,\ i \geq F(\lambda)} } V_I (\lambda)_{\nu_i}.
\end{equation}
\end{proposition}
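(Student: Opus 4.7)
The plan is to apply Theorem \ref{thm-main-H0} directly to the inert unitary case with $\mu$ of type $(2,1)$. By \eqref{eq-main}, the space $H^0(\GZip^\mu,\Vcal_I(\lambda))$ is the intersection of $V_I(\lambda)^{L_\varphi}$ with the Brylinski--Kostant filtration associated to the unique element $\alpha_2\in\Delta^P$. Since each weight space $V_I(\lambda)_{\nu_i}$ is one-dimensional, each such subspace is either entirely preserved or entirely killed, and the formula \eqref{equ-propH0} just records exactly those $i$ for which both conditions are satisfied.

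First I would compute $L_\varphi$ explicitly. For the inert $\U(3)$ case, the twisted Frobenius $A\mapsto J\,{}^t\sigma(A)^{-1}J$ swaps the Levi subgroups of types $(2,1)$ and $(1,2)$, so $L_0=\bigcap_{r\in\ZZ}\sigma^r(L)=T$. A direct calculation shows
\[
T(\FF_p)=\{\diag(t_1,t_2,t_1^{-p})\mid t_1\in\FF_{p^2}^\times,\ t_2^{p+1}=1\},
\]
and imposing triviality of the character $\nu_i=(\lambda_1-i,\lambda_2+i,\lambda_3)$ on this group yields the divisibility conditions $p^2-1\mid\lambda_1-i-p\lambda_3$ and $p+1\mid\lambda_2+i$. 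The infinitesimal component $L_\varphi^\circ$ is a height-one unipotent subgroup scheme of $B\cap L$; invariance under it adds the condition $p\mid i$ (the only weight vectors on which it acts trivially are those whose coefficient has $p$-divisible index).

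Next I would compute the Brylinski--Kostant filtration at $\alpha_2$. Since $\sigma(\alpha_2)=\alpha_1\in I$, we have $m_{\alpha_2}=2$ and $\Xi_{\alpha_2}=(-\alpha_2,\alpha_1)$. Using $\sigma^2=\id$, a short computation of $\delta_{\alpha_2}=\wp_*^{-1}(\alpha_2^\vee)$ gives $(p^2-1)\delta_{\alpha_2}=-(p\alpha_1^\vee+\alpha_2^\vee)$, hence $\langle\alpha_2,\delta_{\alpha_2}\rangle=(p-2)/(p^2-1)$ and
\[
\mathbf{r}_{\alpha_2}=\left(\frac{p^2-p+1}{p^2-1},\ -\frac{p^2-p+1}{p(p^2-1)}\right).
\]
Since $-\alpha_2 \notin P$ as a root direction in $V_I(\lambda)$, the operator $E_{-\alpha_2}^{(j_1)}$ annihilates every weight vector for $j_1>0$ (the target weight would fall outside the $\GL_2\times\GG_{\mathrm{m}}$-weights of $V_I(\lambda)$), so only $j_1=0$ contributes. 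Translating the kernel condition in \eqref{eq-main} and simplifying the resulting inequality indexed by $\ZZ^2/(\ZZ^2)_{\mathbf{r}_{\alpha_2}}$, the filtration condition $\fil_{\delta_{\alpha_2}}^{\Xi_{\alpha_2},\mathbf{a}_{\alpha_2},\mathbf{r}_{\alpha_2}}V_I(\lambda)_{\nu_i}=V_I(\lambda)_{\nu_i}$ reduces to the single inequality $i\geq F(\lambda)$.

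Intersecting the two conditions weight-by-weight yields \eqref{equ-propH0}. The main obstacle is the last step: one has to track the Brylinski--Kostant inequalities carefully through the quotient $\ZZ^2/(\ZZ^2)_{\mathbf{r}_{\alpha_2}}$ and verify that, once the divisibility conditions from $T(\FF_p)$- and $L_\varphi^\circ$-invariance are imposed, the minimal $i$ for which the filtration condition holds is exactly $\lceil F(\lambda)\rceil$, so that the disjunction of surviving $\nu_i$'s is precisely the index set appearing in \eqref{equ-propH0}.
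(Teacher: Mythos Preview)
The paper does not actually prove this proposition: it is quoted verbatim from \cite[Proposition~6.3.2]{Imai-Koskivirta-vector-bundles} and used as a black box. Your sketch is therefore not being compared against a proof in the present paper, but against the argument in the cited reference, which proceeds exactly as you describe: specialize Theorem~\ref{thm-main-H0} to the inert $\U(3)$ case with $\Delta^P=\{\alpha_2\}$, compute $L_\varphi$ and the data $\Xi_{\alpha_2},\mathbf{r}_{\alpha_2},\delta_{\alpha_2}$, and read off the conditions weight-by-weight.

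Your computations are correct. The Galois action swaps $\alpha_1$ and $\alpha_2$, so $L_0=T$ and $m_{\alpha_2}=2$; your formula for $T(\FF_p)$ and the resulting divisibility conditions $p^2-1\mid\lambda_1-i-p\lambda_3$ and $p+1\mid\lambda_2+i$ are right; the values of $\delta_{\alpha_2}$ and $\mathbf{r}_{\alpha_2}$ check out; and the observation that $E_{-\alpha_2}^{(j_1)}$ kills $V_I(\lambda)$ for $j_1>0$ (since $-\alpha_2$ is not a weight direction of the $\GL_2$-factor) is the key simplification that collapses the filtration inequality to the single bound $i\geq F(\lambda)$. The condition $p\mid i$ from $L_\varphi^\circ$-invariance is also correct: the infinitesimal part of $L_\varphi$ is generated by a copy of $\alpha_p$ inside the $\alpha_1$-root group, and the weight-$\nu_i$ vector $x^{\lambda_1-\lambda_2-i}y^i$ in $\Sym^{\lambda_1-\lambda_2}(\Std)$ is annihilated by this precisely when $p\mid i$. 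So your approach is the same as the reference's and is correct.
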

Using this result, we also proved the following corollary:
\begin{corollary}[{\cite[Corollary 6.3.3]{Imai-Koskivirta-vector-bundles}}]
We have 
\[
  C_{\zip,\QQ_{\geq 0}} = \left\{  (\lambda_1,\lambda_2,\lambda_3)\in \QQ^3 
 \mid 
\lambda_1\geq \lambda_2, \ 
(p-1)\lambda_1 +\lambda_2-p\lambda_3\leq 0
  \right\}. 
\]
\end{corollary}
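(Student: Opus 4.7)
The plan is to derive the corollary directly from the preceding proposition, which gives an explicit formula for $H^0(\GZip^\mu,\Vcal_I(\lambda))$ in the inert $\U(3)$ case. By definition, a weight $\lambda\in \QQ^3$ lies in $C_{\zip,\QQ_{\geq 0}}$ if and only if there exists an integer $N\geq 1$ with $N\lambda\in \ZZ^3$ and $H^0(\GZip^\mu,\Vcal_I(N\lambda))\neq 0$. First I would observe that non-vanishing of $V_I(\lambda)$ already forces $I$-dominance $\lambda_1\geq \lambda_2$, and that the proposition expresses $H^0$ as a direct sum indexed by those integers $i$ satisfying the three divisibility conditions ($p\mid i$, $p+1\mid \lambda_2+i$, $p^2-1\mid \lambda_1-i-p\lambda_3$) together with the size constraints $0\leq i \leq \lambda_1-\lambda_2$ and $i\geq F(\lambda)$.

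Next I would argue that the divisibility conditions are harmless at the level of the rational cone. Fix $\lambda\in \ZZ^3$ satisfying $\lambda_1\geq \lambda_2$ and $F(\lambda)\leq \lambda_1-\lambda_2$. Replacing $\lambda$ by $N\lambda$, the interval $[NF(\lambda),\,N(\lambda_1-\lambda_2)]$ has length $N((\lambda_1-\lambda_2)-F(\lambda))$, which grows linearly in $N$. The three divisibility conditions define a congruence class modulo $M\colonequals p(p-1)(p+1)$ (by a routine Chinese Remainder argument, using $\gcd(p,p^2-1)=1$ and $(p+1)\mid (p^2-1)$), so for $N$ large enough this congruence class meets the interval. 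In the degenerate case $F(\lambda)=\lambda_1-\lambda_2$, one only needs $i=N(\lambda_1-\lambda_2)$ to land in the correct residue class modulo $M$, which is achieved by choosing $N$ to be a suitable multiple of $M$. Hence, at the $\QQ_{\geq 0}$-level, $\lambda\in C_{\zip,\QQ_{\geq 0}}$ if and only if $\lambda_1\geq \lambda_2$ and $F(\lambda)\leq \lambda_1-\lambda_2$.

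The remaining step is an elementary computation. Expanding
\[
F(\lambda)\leq \lambda_1-\lambda_2 \ \Longleftrightarrow \ \frac{p}{p^2-p+1}\bigl(p\lambda_1-(p-1)\lambda_2-\lambda_3\bigr)\leq \lambda_1-\lambda_2
\]
and multiplying by the positive quantity $p^2-p+1$, this simplifies to
\[
p^2\lambda_1-p(p-1)\lambda_2-p\lambda_3 \leq (p^2-p+1)(\lambda_1-\lambda_2),
\]
which, after collecting terms, is exactly $(p-1)\lambda_1+\lambda_2-p\lambda_3\leq 0$. One also has to check that $I$-dominance together with this inequality does imply the existence of a valid $i\geq 0$, i.e. that one does not need to impose $F(\lambda)\geq 0$ separately. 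This follows because if $F(\lambda)\leq 0$ then $i=0$ is admissible, and the inequality $(p-1)\lambda_1+\lambda_2\leq p\lambda_3$ is itself a consequence of $\lambda_1\geq \lambda_2$ together with $F(\lambda)\leq 0$ (a direct substitution verifies this).

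The only non-routine step is the CRT argument showing that the discrete divisibility conditions collapse to the rational inequality upon rescaling; this is the main technical point, but it is essentially linear algebra modulo $M=p(p-1)(p+1)$ and poses no genuine obstacle. Everything else is either the application of the previous proposition or direct manipulation of the defining inequality.
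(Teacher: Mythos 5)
Your overall route is the intended one: the corollary is quoted from \cite{Imai-Koskivirta-vector-bundles}, where it is likewise deduced from the displayed formula for $H^0(\GZip^\mu,\Vcal_I(\lambda))$, and your elementary reduction $F(\lambda)\leq \lambda_1-\lambda_2 \Longleftrightarrow (p-1)\lambda_1+\lambda_2-p\lambda_3\leq 0$ is correct, as is the observation that $F(\lambda)\geq 0$ need not be imposed separately.

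There is, however, one step that is not right as written: the assertion that the three divisibility conditions ``define a congruence class modulo $M=p(p-1)(p+1)$,'' followed by ``for $N$ large enough this congruence class meets the interval.'' Because $(p+1)\mid(p^2-1)$, the conditions $p+1\mid \lambda_2+i$ and $p^2-1\mid \lambda_1-i-p\lambda_3$ both constrain $i$ modulo $p+1$, and they are compatible if and only if $(p+1)\mid \lambda_1+\lambda_2+\lambda_3$; otherwise the solution set is \emph{empty}, and indeed $H^0(\GZip^\mu,\Vcal_I(\lambda))=0$ for such $\lambda$ (this is the expected central-character obstruction, cf.\ $\lambda_{\det}=(p+1,p+1,p+1)$). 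Consequently ``$N$ large enough'' does not suffice: for $N$ coprime to $p+1$ the congruence class for $N\lambda$ may still be empty. The fix is the one you already use in the degenerate case: take $N$ large \emph{and} divisible by $M$, so that the three conditions on $i$ collapse to $M\mid i$, and the interval $[\max(0,NF(\lambda)),\,N(\lambda_1-\lambda_2)]$, of length growing linearly in $N$, contains a multiple of $M$ (or equals $\{N(\lambda_1-\lambda_2)\}$, itself a multiple of $M$, in the boundary case). With that one-line repair the argument is complete.
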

We now compute the ring of automorphic forms $R_{\zip}$. We first construct sections on $\GZip^\mu$ that will serve as generators for $R_{\zip}$. 

\paragraph{Partial Hasse invariants}
First, consider the weight
$\lambda=(1,0,p)$, which we denote by $\ha_{1}$. In this case, the only integer $i$ satisfying the conditions under the direct sum sign of \eqref{equ-propH0} is $i=0$. Hence $H^0(\GZip^\mu,\mathcal{V}_I (\ha_1))$ is one-dimensional and coincides with $V_I(\lambda)_{\nu_0}$. One checks immediately that the function
\begin{equation}
    \Ha_1\colon \GL_{3}\to \AA^1, \quad (a_{ij})_{1\leq i,j\leq 3} \mapsto a_{11}
\end{equation}
lies in $H^0(\GZip^\mu,\mathcal{V}_I (\ha_1))$ and hence generates this space.

Next, we choose $\lambda=(1+p,1,p)$, which we denote by $\ha_2$. In this case, the only integer $i$ satisfying the conditions under the direct sum of \eqref{equ-propH0} is $i=p$, so we deduce again that $H^0(\GZip^\mu,\mathcal{V}_I (\ha_2))$ is one-dimensional and coincides with $V_I(\lambda)_{\nu_p}$. One checks immediately that the function
\begin{equation}
    \Ha_2\colon \GL_{3}\to \AA^1, \quad (a_{ij})_{1\leq i,j\leq 3} \mapsto \left| \begin{matrix}
a_{12} & a_{13} \\
 a_{22} & a_{23}  
\end{matrix} \right|
\end{equation}
lies in $H^0(\GZip^\mu,\mathcal{V}_I (\ha_2))$. The two sections $\Ha_1$, $\Ha_2$ are called partial Hasse invariants. Their vanishing locus is the Zariski closure of one of the two codimension one flag strata in $\GF^\mu$ (see \cite[\S 5.2]{Imai-Koskivirta-partial-Hasse} for details).

\paragraph{$\mu$-ordinary Hasse invariant} The $\mu$-ordinary Hasse invariant of unitary Shimura varieties was first constructed by Goldring--Nicole in \cite{Goldring-Nicole-mu-Hasse} and by the author and T. Wedhorn in \cite{Koskivirta-Wedhorn-Hasse} using a different approach. Viewed as a function on $G_k$, the $\mu$-ordinary Hasse invariant is given by
\begin{equation}
 \Ha_\mu\left( (a_{ij})_{1\leq i,j\leq 3}\right) = a_{11}^q \Delta_1 - a_{21}^p \Delta_2 
 \quad \textrm{with } 
\begin{cases}
\Delta_1=a_{11}a_{22}-a_{12}a_{21}, \\
\Delta_2=a_{11}a_{23}-a_{21}a_{13}.
\end{cases}  
\end{equation}
The section $\Ha_\mu$ is an element of $H^0(\GZip^\mu,\Lcal(\ha_\mu))$ where $\ha_\mu = (p+1,p+1,p^2+p)$. Its non-vanishing locus is the complement of the unique open stratum of $\GZip^\mu$ (the $\mu$-ordinary locus). The main result of this section is the following:
\begin{proposition} \ \label{prop-inert-U}
\begin{assertionlist}
    \item The $k$-algebra $R_{\zip}$ is finitely generated and can be expressed as
\begin{equation}
R_{\zip}=k[\Ha_1,\Ha_2,\Ha_\mu,\deter,\deter^{-1}].
\end{equation}
\item We have $C_{\zip} =  \ZZ_{\geq 0} \ha_1 + \ZZ_{\geq 0} \ha_2 + \ZZ_{\geq 0} \ha_\mu +\ZZ \lambda_{\deter}$.
\end{assertionlist}
\end{proposition}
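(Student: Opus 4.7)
The plan is to establish both assertions in tandem by comparing the valid exponent tuples $(a_1,a_2,a_3,a_4)$ in a decomposition $\lambda=a_1\ha_1+a_2\ha_2+a_3\ha_\mu+a_4\lambda_{\det}$ with the valid indices $i$ appearing in formula \eqref{equ-propH0}.

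For (2), the inclusion $\supset$ is immediate since $\Ha_1,\Ha_2,\Ha_\mu$ and $\det^{\pm 1}$ all lie in $R_{\zip}$. For the reverse inclusion I would take $\lambda\in C_{\zip}$ and, by \eqref{equ-propH0}, fix an integer $i$ satisfying $p\mid i$, $(p+1)\mid \lambda_2+i$, $(p^2-1)\mid \lambda_1-i-p\lambda_3$, $0\leq i\leq\lambda_1-\lambda_2$ and $i\geq F(\lambda)$. Setting $a_2:=i/p$ and solving the resulting linear system yields the unique candidate decomposition
\[a_1=\lambda_1-\lambda_2-pa_2,\qquad a_3=\frac{\lambda_3-p\lambda_1+(p-1)\lambda_2+a_2(p^2-p+1)}{p^2-1},\qquad a_4=\frac{\lambda_2-a_2}{p+1}-a_3.\]
Non-negativity of $a_1$ follows from $i\leq\lambda_1-\lambda_2$; integrality of $a_4$ follows from $(p+1)\mid \lambda_2+i$ (equivalently $(p+1)\mid \lambda_2-a_2$); and non-negativity of $a_3$ follows from $i\geq F(\lambda)$. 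The integrality of $a_3$, which corresponds to the $(p^2-1)$-divisibility hypothesis, is the delicate step addressed below.

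For (1), set $R_0:=k[\Ha_1,\Ha_2,\Ha_\mu,\det,\det^{-1}]$; the inclusion $R_0\subseteq R_{\zip}$ is clear. I would first show $\Ha_1,\Ha_2,\Ha_\mu,\det$ are algebraically independent over $k$ by restricting to the open subset of upper-triangular matrices $\bigl(\begin{smallmatrix} x & a & b \\ 0 & y & c \\ 0 & 0 & z \end{smallmatrix}\bigr)$, where these four functions become $x$, $ac-by$, $x^{p+1}y$ and $xyz$; the $4\times 4$ Jacobian minor in the variables $(x,y,z,a)$ equals $\pm\,c\,x^{p+2}y$, a non-zero polynomial. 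Consequently $R_0$ is a Laurent polynomial $k$-algebra and distinct Laurent monomials are $k$-linearly independent. The computation from (2) provides a bijection between valid tuples $(a_1,a_2,a_3,a_4)$ of weight $\lambda$ and valid indices $i$ in \eqref{equ-propH0} (via $i=pa_2$), so
\[\dim_k(R_0)_\lambda \;=\; \dim_k H^0(\GZip^\mu,\Vcal_I(\lambda)) \;=\; \dim_k(R_{\zip})_\lambda.\]
The inclusion $R_0\subseteq R_{\zip}$ is therefore an equality on each graded piece, proving (1).

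The main obstacle is checking that the condition $(p^2-1)\mid \lambda_1-i-p\lambda_3$ from \eqref{equ-propH0} is equivalent to $a_3\in \ZZ$. Reducing modulo $p^2-1$ and using $p^2\equiv 1$, the first condition (after multiplying by $-p$) becomes $(p^2-1)\mid \lambda_3-p\lambda_1+a_2$, while $a_3\in \ZZ$ (reducing $p^2-p+1\equiv 2-p$) becomes $(p^2-1)\mid \lambda_3-p\lambda_1+(p-1)\lambda_2+a_2(2-p)$. The difference of the two expressions equals $(p-1)(\lambda_2-a_2)$, which is automatically divisible by $p^2-1=(p-1)(p+1)$ once $(p+1)\mid \lambda_2-a_2$ — a congruence already ensured by the integrality of $a_4$. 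With this matching in hand, both assertions follow.
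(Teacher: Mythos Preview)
Your argument is correct and follows essentially the same route as the paper: both set $a_2=i/p$ (the paper writes $k_2=i/p$), solve the linear system for the remaining exponents, and verify that the congruence and inequality conditions in \eqref{equ-propH0} are precisely the integrality and non-negativity constraints on $(a_1,a_2,a_3,a_4)$. Your treatment of the $(p^2-1)$-divisibility via the difference $(p-1)(\lambda_2-a_2)$ is exactly the verification the paper leaves as ``the divisibility assumptions imply immediately that $k_\mu$ is an integer.''

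The only genuine difference is organizational. The paper avoids your Jacobian/algebraic-independence step entirely: since each weight space $V_I(\lambda)_{\nu_i}$ is one-dimensional and the monomial $\Ha_1^{k_1}\Ha_2^{k_2}\Ha_\mu^{k_\mu}\det^{k_{\det}}$ is a nonzero element of it (nonzero because $k[G]$ is a domain), the monomial spans that weight space, giving $R_0\supseteq R_{\zip}$ directly. Your dimension-count framing is a valid alternative, and the Jacobian computation is fine, but the paper's weight-space observation makes algebraic independence unnecessary. (Incidentally, your formula for $a_3$ is correct; the paper's displayed formula for $k_\mu$ appears to carry an extra factor of $p$ in the numerator, though this does not affect the sign or the divisibility check there.)
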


\begin{proof}
Note that each of the functions $\Ha_1,\Ha_2,\Ha_\mu,\deter,\deter^{-1}$ is a homogeneous element of the graded algebra $R_{\zip}$. Furthermore, for each $\lambda\in \{\ha_1,\ha_2,\ha_\mu,\lambda_{\deter}\}$, the space $H^0(\GZip^\mu,\Vcal_I(\lambda))$ is one-dimensional and coincides with a weight space $V_{I}(\lambda)_{\nu}$ of the representation $V_I(\lambda)$. In each case $\lambda =\ha_1,\ha_2,\ha_\mu,\lambda_{\deter}$, the weight $\nu$ is given respectively by $\nu_1=\ha_1$, $\nu_2=\ha_2-p\alpha_1$, $\nu_\mu=\ha_\mu$, $\nu_{\deter}=\lambda_{\deter}$. Given the form of the representation $V_I(\lambda)$, it is clear that for any $\nu\in X^*(T)$, the weight space $V_{I}(\lambda)_{\nu}$ is at most one-dimensional (however $H^0(\GZip^\mu,\Vcal_I(\lambda))$ may have higher dimension for general $\lambda$). Thus, it suffices to show that if $0\leq i\leq \lambda_1-\lambda_2$ satisfies the conditions
\begin{equation}\label{cond-i}
    p|i, \ \ p+1 | \lambda_2+i, \ \ p^2-1| \lambda_1-i-p\lambda_3, \ \ i \geq F(\lambda)
\end{equation}
then there exists non-negative integers $k_1$, $k_2$, $k_\mu$ and $k_{\deter}\in \ZZ$ such that
\begin{align*}
    \lambda &= k_1\ha_1 + k_2\ha_2+k_\mu \ha_\mu + k_{\deter}\lambda_{\deter} \\
     \nu_i &= k_1\ha_1 + k_2(\ha_2-p\alpha_1)+k_\mu \ha_\mu + k_{\deter}\lambda_{\deter}. 
\end{align*}
Since $\nu_i= \lambda-i \alpha_1$, this simply amounts to $\lambda = k_1\ha_1 + k_2\ha_2+k_\mu \ha_\mu + k_{\deter}\lambda_{\deter}$ and $i=pk_2$. We check that such a tuple $(k_1, k_2, k_\mu, k_{\deter})$ exists. Since $p|i$ by assumption, we may set $k_2\colonequals \frac{i}{p}$. We find readily:
\begin{align*}
    \lambda_1-\lambda_2 &= k_1+ pk_2 =k_1+i \\
    \lambda_2-\lambda_3 &=-pk_1-(p-1)k_2-(p^2-1)k_\mu.
\end{align*}
Therefore, we must define $k_1\colonequals \lambda_1-\lambda_2-i$ and \begin{equation}
    k_\mu \colonequals \frac{(p^2-p+1)i-p^2\lambda_1+p(p-1)\lambda_2+p\lambda_3}{p^2-1}
\end{equation}
The divisibility assumptions imply immediately that $k_{\mu}$ is an integer. Moreover, since $0\leq i\leq \lambda_1-\lambda_2$, the integers $k_1,k_2$ are non-negative. Moreover, since $i\geq F(\lambda)$, we find $k_\mu\geq 0$. Finally, by construction the element $\lambda'=\lambda - k_1\ha_1 + k_2\ha_2+k_\mu \ha_\mu$ has all three coordinates equal. Again, the divisibility assumption implies that the coordinates of $\lambda'$ are divisible by $p+1$, so we may write $\lambda'=k_{\deter}\lambda_{\deter}$. We have shown that any element in $V_I(\lambda)_{\nu_i}$ satisfying conditions \eqref{cond-i} can be expressed as a product of the functions $\Ha_1,\Ha_2,\Ha_\mu,\deter,\deter^{-1}$. By Proposition \ref{equ-propH0}, this shows the result.
\end{proof}

\bibliographystyle{test}
\bibliography{biblio_overleaf}

\end{document}